\newtheorem{thm}{Theorem}[section] 
\newtheorem{dfn}[thm]{Definition}
\newtheorem{rmk}[thm]{Remark}
\newtheorem{rmks}[thm]{Remarks}
\newtheorem{cor}[thm]{Corollary}
\newtheorem{prop}[thm]{Proposition}
\newtheorem{lem}[thm]{Lemma}
\def\cyclic{\mathop{\kern0.9ex{{+}
\kern-2.2ex\raise-.28ex\hbox{\Large\hbox
{$\circlearrowright$}}}}\limits}
\def\buildrel#1_#2^#3{\mathrel{\mathop{\kern 0pt#1}\limits_{#2}^{#3}}}
\newcommand{\Id}{{\rm Id}}
\newcommand{\id}{{\rm id}}
\newcommand{\Ad}{{\rm Ad}}
\newcommand{\eps}{\varepsilon}
\newcommand{\bO}{{\bf\Omega}}
\newcommand{\C}{\mathbb C}
\newcommand{\bD}{{\bf D}}
\newcommand{\Z}{\mathbb Z} 
\newcommand{\R}{\mathbb R}
\newcommand{\K}{\mathbb K}
\newcommand{\Q}{\mathbb Q} 
\newcommand{\N}{\mathbb N}
\newcommand{\Tr}{\mbox{\rm Tr}}
\renewcommand{\k}{{\mathfrak{k}}{}}
\renewcommand{\k}{{\mathfrak{k}}{}}
\newcommand{\CO}{{\mathcal O}{}}
\newcommand{\CS}{\mathcal S}
\newcommand{\CE}{\mathcal E}
\newcommand{\CK}{\mathcal K}
\newcommand{\CD}{\mathcal D}
\newcommand{\CH}{\mathcal H}
\newcommand{\CB}{\mathcal B}
\newcommand{\CF}{\mathcal F}
\newcommand{\CG}{\mathcal G}
\newcommand{\CA}{\mathcal A}
\newcommand{\vf}{\varphi}
\renewcommand{\k}{{\bf k}}
\def\cref#1{Corollary~\ref{#1}}
\title{Deformation Quantization for actions of $\Q_p^{d}$}
\author{Victor Gayral}
\address{Laboratoire de Math\'ematiques\\
Universit\'e de Reims Champagne-Ardenne\\
Moulin de la Housse-BP 1039, 51687 Reims, France\\
e-mail: victor.gayral@univ-reims.fr}
\author{David Jondreville}
\address{Laboratoire de Math\'ematiques\\
Universit\'e de Reims Champagne-Ardenne\\
Moulin de la Housse-BP 1039, 51687 Reims, France\\
e-mail: david.jondreville@etudiant.univ-reims.fr}
\date{}
\begin{document}
\maketitle
\begin{abstract}
The main objective of this article is to develop the theory of
deformation of $C^*$-algebras endowed with  a group action,
from the perspective of non-formal equivariant quantization.
This program, initiated  in \cite{BG}, aims to 
 extend  Rieffel's deformation theory \cite{Ri} for more general groups than 
 $\mathbb R^d$.  In \cite{BG}, we have constructed such a theory for a class
 of non-Abelian Lie groups. In the present article, we study the somehow  opposite situation of Abelian but non-Lie
groups.  More specifically, we construct  here a deformation theory of $C^*$-algebras
endowed with
an action of a finite dimensional vector space over a non-Archimedean local field of characteristic 
different from 2. At the root of our construction stands the $p$-adic version of the Weyl quantization introduced by Haran \cite{Haran} and further extended by
 Bechata \cite{Bechata} and Unterberger \cite{Unterberger}.

\end{abstract}

{\Small{\bf Keywords:} Deformation
of $C^*$-algebras, Equivariant quantization,
Local fields, $p$-Adic pseudo-differential analysis}

\tableofcontents

\section{Introduction}
When formulated in the setting of operator algebras,  equivariant quantization  interconnects  both with deformation theory and
with quantum groups. These interconnections originate in the work of
Rieffel  \cite{Ri}, 
where it is shown that  Weyl's pseudo-differential calculus can be used to design a deformation theory for
$C^*$-algebras equipped with a continuous action of $\R^d$. Applying this deformation process  to $C_0(G)$, where $G$ is a  locally compact group possessing a copy 
of $\R^d$ as a closed subgroup and for the action $\rho\otimes\lambda$ of $G\times G$, in \cite{Ri3} Rieffel was further able to produce a large class of examples
of quantum groups in the $C^*$-algebraic setting. 
In \cite{BG}, Bieliavsky and one  of us   have successfully extended Rieffel's deformation theory
for actions
of negatively curved K\"ahlerian Lie groups on $C^*$-algebras. This was the first explicit example of a 
deformation theory for $C^*$-algebras coming from actions of non-Abelian groups and it was
based in an essential way on a generalization
(to all negatively curved K\"ahlerian Lie groups) of the $ax+b$-equivariant quantization
 due to Unterberger \cite{Un84}. (See also \cite{BGT}  for an extension of Rieffel's
 to construction to actions of the Heisenberg supergroup.)

There is another approach to quantization, due to
 Landstad and Raeburn \cite{L,L2,LR,LR2}, which also connects to quantum groups. At the conceptual 
 level, the starting point there
is that the twisted group $C^*$-algebra associated with a unitary $2$-cocycle should be considered as a quantization of the virtual dual group. This  approach to 
quantization has  been further developed  by Kasprzak in \cite{K1} to design a deformation theory for
 $C^*$-algebras endowed with a continuous action of a locally compact Abelian group,
from a unitary $2$-cocycle on the dual group. 
It was then observed by Bhowmick,  Neshveyev and 
Sangha in \cite{Nes-Shanga} that Kasprzak's construction still makes sense for actions of non-Abelian
locally compact groups, provided that the unitary $2$-cocycle is now chosen
in the dual quantum group (i.e$.$ the group von Neumann algebra).  An important point is that
unless the group used to deform is Abelian, the symmetries of the deformed objects  are now 
 given by a quantum group. All this suggests that quantum groups are naturally present in the context of 
 equivariant 
quantizations and in the associated deformation theories.

Very recently, Neshveyev and Tuset gave 
in \cite{NT} a great clarification of the role of quantum groups in deformations, by providing
a beautiful  theory holding with the  most imaginable degree of generality, namely for
   continuous actions of  locally compact 
quantum groups on $C^*$-algebras and from a
unitary $2$-cocycle on the dual quantum group. 
Their starting point is the work of De Commer  \cite{dC}, which shows that 
given a locally compact quantum group $(G,\Delta)$ (in the von Neumann algebraic setting 
\cite{KV1,KV2})  together with a dual measurable 
unitary $2$-cocycle\footnote{A dual measurable 
unitary $2$-cocycle $F$ on $G$, is
an unitary element of $L^\infty(\hat G)\bar\otimes L^\infty(\hat G)$ which satisfies the cocycle
condition 
$(F\otimes 1)(\hat\Delta\otimes\Id)(F)=(1\otimes F)(\Id\otimes\hat\Delta)(F)$.}
$F$ on $( G,\Delta)$,
 the pair $(\hat G,F\hat\Delta(.)F^*)$ is again a 
locally compact quantum group. The dual quantum group, denoted by $(G_F,\Delta)$, is thought as
 the deformation of $(G,\Delta)$ and it is that quantum group which acts on the deformed 
$C^*$-algebras.

However, already when $G$ is an ordinary  non-Abelian group,  constructing a nontrivial and  concrete 
dual unitary $2$-cocycle can be a very 
difficult task. For instance, in \cite{NT} the only  example given  is the one
canonically attached (see below) to the equivariant quantization map constructed in \cite{BG}. 
Moreover, even at the level of $C_0(G)$,  it is not clear whether the constructions of \cite{NT} and of
\cite{BG} agree, while it is known \cite{Nes-flat} to hold for of actions of $\R^d$. 
We should also mention that  the framework of \cite{Ri} and \cite{BG} comes
naturally with parameters and that Rieffel's  methods  are  perfectly 
well adapted to study the question of continuity
of the associated field of deformed $C^*$-algebras. In contrast, it is uncertain whether the methods
of \cite{NT} (and already  those of \cite{Nes-Shanga,K1}) applied in a parametric
situation can lead to results about continuity. Moreover, oppositely to Rieffel's type methods,
it is unclear whether those of \cite{NT} are well suited 
in view of applications in noncommutative geometry, for instance in spectral triple theory
\cite{Co4,Co5}.

 For all these reasons, and even if there exists now a satisfactory and completely general deformation 
 theory of $C^*$-algebras
 by use of its symmetries \cite{NT}, we believe that constructing deformation theories directly from 
equivariant quantizations is important in its own right.

The main goal of this paper is to continue the program initiated in \cite{BG} and which consists in
extending Rieffel's approach to deformation
for more general groups than $\R^d$. In \cite{BG}, even if we were in the relatively simple situation
 of solvable simply connected real Lie groups,
we  faced serious analytical difficulties underlying
the non-commutativity of the group. Here we study the somehow  opposite situation of Abelian but non-Lie
groups. More specifically, the groups we consider here are finite dimensional vector spaces over
a non-Archimedean local field
of characteristic different from $2$. 
At the root of our construction stands the $p$-adic version of the Weyl quantization introduced by Haran \cite{Haran} and further extended by
 Bechata \cite{Bechata} and Unterberger \cite{Unterberger}.
Even if our 
framework is already covered
by  Kasprzak's approach (in fact, it this one of our results), 
the primary interest of the present 
approach is to design new analytical tools adapted to the non-Lie case. In a forthcoming paper, we 
treat   a non-Abelian and non-Lie example, 
given by the affine group of a non-Archimedean local field.
Another important feature of the case studied here, is that the deformation parameter is no longer 
a real number.
Instead, our parameter space is the  ring of integers of the field. This affects substantially 
the answer we are able to give about the continuity of the field of deformed $C^*$-algebras. To conclude
with general features, we should also mention that here, part of the analytical arguments are even simpler
than their Archimedean analogues in \cite{Ri}. This a somehow recurrent phenomenon  in $p$-adic 
harmonic analysis. Here, this comes from the following reason.  The
$p$-adic pseudo-differential calculus is controlled by two operators $I$ and $J$ \cite{Bechata,Haran}, 
which are the natural non-Archimedean
substitutes for the operator of multiplication by the function $[x\in\R^n\mapsto(1+\langle x,x\rangle)^{1/2}]$ 
and for the flat Laplacian. But here they do commute! 
However, $p$-adic pseudo-differential operators do not commute in general!

\quad

Let us now be more precise about the program we wish to develop. In the differentiable setting, 
to define a non-formal equivariant quantization, one generally starts
 from a symplectic manifold $(M,\omega)$
together with a Lie subgroup $\tilde G$ of the group of symplectomorphisms. 
An equivariant quantization is a map 
$$
\bO:C^\infty_c(M)\to \CB(\CH_\pi),
$$
associated with a projective unitary  representation  $(\CH_\pi,\pi)$ of $\tilde G$,
satisfying the covariance property:
$$
\pi(g)\,\bO(f)\,\pi(g)^*=\bO(f^g)\,,\quad\forall f\in C^\infty_c(M)\,,\;\forall g\in\tilde G,
$$
where $f^g:=[x\in M\mapsto f(g^{-1}.x)]$. There is a paradigm of such equivariant 
quantizations, which covers most of the quantizations known, called ``Moyal-Stratonowich quantization'' 
by Cari\~nena, Gracia-Bond\'{\i}a and V\'arilly in  \cite{CGBV} (see also \cite[section 3.5]{GBFV}).
It is associated with a family of bounded (to simplify a little bit the picture)
selfadjoint operators $\{\Omega(x)\}_{x\in M}$ on $\CH_\pi$ satisfying the covariance property $\pi(g)\,
\Omega(x)\,\pi(g)^*
=\Omega(g.x)$ (plus two other properties that are not very relevant for the following discussion). 
The associated  quantization map is then defined by
$$
\bO(f):=\int_M\,f(x)\,\Omega(x)\, d\mu(x)\,,\quad\forall f\in C^\infty_c(M),
$$
where $d\mu$ is the Liouville measure on $M$.
Now, to connect equivariant quantization to deformation and to 
quantum groups, we need to restrict ourselves to the situation where $\tilde G$ possesses a subgroup $G$
which acts simply transitively on $M$. Under the identification $G\simeq M$, the Lie group
$G$ is then endowed
with a symplectic structure that is invariant under left  translations. Hence, what we are looking for 
is a non-formal quantization map on a the symplectic Lie group $G$ which is equivariant under left 
translations. In the context of Moyal-Stratonowich quantization, with
$e$  the neutral element of $G$, we then have:
 $$
 \Omega(g)=\pi(g)\,\Omega(e)\,\pi(g)^*,
 $$
 and  the Liouville measure $d\mu(x)$ on $M$ becomes a left 
invariant Haar measure $d^\lambda(g)$ on $G$. Hence, setting $\Sigma:=\Omega(e)$,
a $G$-equivariant Moyal-Stratonowich quantization on $G$ is always of the form
$$
\bO_{\pi,\Sigma}(f):=\int_G\,f(g)\;\pi(g)\,\Sigma\,\pi(g)^*\, d^\lambda(g)\,,\quad\forall f\in C^\infty_c(G).
$$
What is important with  the formula above is that symplectic differential geometry  disappeared
from the picture and provides an ansatz to construct  left-invariant quantizations on general groups.

Assume now that $G$ is an arbitrary locally compact second countable group, pick 
a projective representation $(\CH_\pi,\pi)$ and let $\Sigma\in\mathcal B(\CH_\pi)$. In general, there is no reason why
the associated quantization map behaves well.
A natural  assumption is that the quantization map  $\bO_{\pi,\Sigma}:C_c(G)
\to \CB(\CH_\pi)$ extends to a unitary operator from $L^2(G)$ to $\mathcal L^2(\CH_\pi)$, 
 the Hilbert space of Hilbert-Schmidt
operators on $\CH_\pi$. In this case we talk about unitary quantizations. In the existing examples, 
 the representation space $\CH_\pi$ is of the form $L^2(Q,\nu)$ and the basic operator $\Sigma$ is of
the form 
$m\circ T_\sigma$, where $m$ is an operator of multiplication by a Borelian function
on $Q$ and $T_\sigma$ is the operator of composition by a Borelian involution $\sigma:Q\to Q$.

For unitary quantizations, one can transfer the algebraic structure of $\mathcal L^2(\CH_\pi)$
to $L^2(G)$ and define an associative left equivariant deformed product:
$$
\star_{\pi,\Sigma}:L^2(G)\times L^2(G)\to L^2(G)\,,\quad (f_1,f_2)\mapsto \bO_{\pi,\Sigma}^*\big(
\bO_{\pi,\Sigma}(f_1)\,\bO_{\pi,\Sigma}(f_2)\big),
$$
where  $\bO_{\pi,\Sigma}^*:\mathcal L^2(\CH_\pi)\to L^2(G)$ denotes the adjoint map,
which is traditionally called the symbol map.
Note that  on the trace-class ideal $\mathcal L^1(\CH_\pi)\subset \mathcal L^2(\CH_\pi)$,
it is given by
$$
\bO_{\pi,\Sigma}^*(S)=\big[g\mapsto\Tr\big(S\,\pi(g)\,\Sigma\,\pi(g)^*\big)\big],\quad \forall S\in \mathcal L^1(\CH_\pi),
$$
so that the deformed product is then  associated with a distributional (in the sense of Bruhat
\cite{Bruhat})
tri-kernel:
$$
f_1\star_{\pi,\Sigma} f_2(g_0)=\int_{G\times G} K_{\pi,\Sigma}(g_0,g_1,g_2)\,f_1(g_1)\,f_2(g_2)\, d^\lambda(g_1)\,d^\lambda(g_2),
$$
where  $K_{\pi,\Sigma}$ is (formally) given  by 
$$
K_{\pi,\Sigma}(g_0,g_1,g_2)=\Tr\big(
\Sigma\,\pi(g_0^{-1}g_1)\,\Sigma\,\pi(g_1^{-1}g_2)\,\Sigma\,\pi(g_2^{-1}g_0)\big).
$$
In general, $K_{\pi,\Sigma}$ is not a singular object but rather 
a regular function  (in the sense of Bruhat \cite{Bruhat}).

There is then a natural candidate for a dual unitary $2$-cocycle $F_{\pi,\Sigma}$ on $G$, namely
$$
F_{\pi,\Sigma}:=\int_{G\times G} \overline{K_{\pi,\Sigma}(e,g_1,g_2)}\,\lambda_{g_1^{-1}}\otimes\lambda_{g_2^{-1}}\,
 d^\lambda(g_1)\,d^\lambda(g_2)
\in W^*(G\times G).
$$
The $2$-cocyclicity  property is automatic from the construction since this property is equivalent to left-equivariance
and associativity of the deformed product $\star_{\pi,\Sigma}$. The only  remaining task is to check 
that $F_{\pi,\Sigma}$ is well defined as a unitary element of the  group von Neumann algebra 
$W^*(G\times G)$. As observed in \cite{NT}, this is the case for the quantization map
considered in \cite{BG}.

There is also a natural candidate for a deformation theory. Consider  now
a $C^*$-algebra $A$ endowed with a continuous action $\alpha$ of $G$. 
Then, we may try to define a new multiplication on $A$ by the formula:
\begin{align}
\label{base}
a\star^\alpha_{\pi,\Sigma} b:=\int_{G\times G} K_{\pi,\Sigma}(e,g_1,g_2)\,\alpha_{g_1}(a)\,\alpha_{g_2}(b)\,
 d^\lambda(g_1)\,d^\lambda(g_2)\,,\quad a,b\in A.
\end{align}
Of course, there no reason why this integral should be well defined since the map $[g\mapsto\alpha_g(a)
]$ is constant in norm and since $K_{\pi,\Sigma}$ 
is typically unbounded (at least when the group is non-unimodular).
Rieffel's approach to deformations consists then in two steps:
\begin{itemize}
\item Find $A_{\rm reg}$, a dense  $\alpha$-stable Fr\'echet subalgebra of $A$, on which the
multiplication  \eqref{base} is inner.
\item Embed continuously the deformed Fr\'echet algebra $(A_{\rm reg},\star^\alpha_{\pi,\Sigma})$ 
into a $C^*$-algebra.
\end{itemize}
The $C^*$-deformation of $A$ is then defined as the $C^*$-completion of $A_{\rm reg}$ and is denoted
$A_{\pi,\Sigma}$.

To deal with the first step, one usually works with oscillatory integrals. Roughly speaking, it boils down to
find a countable family of operators $\underline\bD:=\{\bD_j\}_{j\in J}$, where $J$ is the index set
of  the seminorms of $A_{\rm reg}$, acting on the space of regular functions (in the sense of Bruhat) $\CE(G\times G)$, which leave invariant the two-point kernel 
$\bD_jK_{\pi,\Sigma}(e,.,.)=K_{\pi,\Sigma}(e,.,.)$ and 
such that for all $a,b\in A_{\rm reg}$, the transposed operator $\bD_j^t$ sends  the mapping
$[(g_1,g_2)\mapsto \alpha_{g_1}(a)\,\alpha_{g_2}(b)]$
to an element of $L^1(G\times G,A_{{\rm reg},j})$,  where $A_{{\rm reg},j}$ denotes the semi-normed space associated with the $j$-th seminorm of $A_{{\rm reg}}$.
One then gets a continuous bilinear map defined by
$$
\star^\alpha_{\pi,\Sigma}:A_{\rm reg}\times A_{\rm reg}\to A_{{\rm reg},j}\,,\quad (a,b)\mapsto
\int_{G\times G} K_{\pi,\Sigma}(e,g_1,g_2)\,\bD_j^t\big(\alpha_{g_1}(a)\,\alpha_{g_2}(b)\big)\,
 d^\lambda(g_1)\,d^\lambda(g_2).
$$
Then, it remains to show that the associativity is preserved by the regularization scheme underlying 
the introduction of the operators $\underline\bD$.

For the second point, one usually starts  by proving an $A$-valued version of the Calderon-Vaillancourt
Theorem. It  basically says that if you consider $A$ to be the $C^*$-algebra of right uniformly continuous
bounded functions on $G$ with action given by right translation, then the quantization map 
$\bO_{\pi,\Sigma}$ should 
send continuously $A_{\rm reg}$ to $\CB(\CH)$. When the projective representation $\pi$  is square 
integrable\footnote{By a square integrable projective representation, we mean a representation of the 
associated central extension which is square integrable
modulo its center.},  from the Duflo-Moore theory \cite{Duflo-Moore} we can construct a 
weak resolution of identity from $\pi$. It allows to use general methods based on Wigner functions as first
introduced in \cite{Unold}.

\quad

The paper is organized as follow. In section 2, we fix notations 
 and we review the $p$-adic Weyl pseudo-differential calculus 
 on $\k^{d}$, where $\k$ is a non-Archimedean
 local field of characteristic different from $2$. 
 In fact, we consider a family of $p$-adic quantization maps, indexed by a parameter
 $\theta$ in $\CO_\k$, the ring of integers of $\k$.
Section 3 contains the most technical part of the paper. It is in that section that we construct the
space $A_{\rm reg}$ of regular elements  of a $C^*$-algebra $A$ for a given continuous action
$\alpha$ of $\k^{2d}$ (Definition \ref{Areg}).
We then define a deformation theory at the Fr\'echet level (Theorem \ref{TDF}), using  oscillatory
integrals methods.
In section 4, we extend the $p$-adic Calderon-Vaillancourt Theorem of \cite{Bechata} in the case
of $C^*$-valued symbols (Proposition \ref{CV}). 
This yields an embedding of the deformed Fr\'echet algebra into 
a $C^*$-algebra and consequently a deformation theory at the $C^*$-level (Theorem \ref{HW}).
We call $A_\theta$ the $C^*$-deformation of $A$.
We also prove that our deformed $C^*$-norm can be realized as the $C^*$-norm of $A$-linear
adjointable bounded endomorphisms of a $C^*$-module (Proposition \ref{CMA}) and
that our construction coincides with those of \cite{K1} and \cite{Nes-Shanga} (Theorem \ref{TT}).
In the final section 5, we establish the basic properties of the deformation. In particular, 
we show that contrary to the Archimedean case,
the $K$-theory is not an invariant of the deformation and that the fields of deformed
$C^*$-algebras $(A_{\gamma\theta^2})_{\theta\in\CO_\k}$, for $\gamma\in\CO_\k$ arbitrary, are
 continuous.

\section{A $p$-adic pseudo-differential calculus}
\subsection{Framework and notations}

 Let   $\k$ be a non-Archimedean local field, that is a non-Archimedean  non-discrete 
 locally compact topological field.  $\k$ is  complete for the ultrametric associated with
 the  absolute value $|.|_\k$, given  by the restriction to dilations of the module function  (and extended to 
zero on $0$). 
 Non-Archimedean local fields are classified.
In characteristic zero,  $\k$ is  isomorphic to a finite extension  of $\Q_p$, the field of $p$-adic numbers.
In positive characteristic, $\k$ is isomorphic to  $\mathbb{F}_{q}((X))$, the field of Laurent series with 
coefficients 
 in a finite field. For important technical reasons,
 we will  (mostly) assume that the
 characteristic of $\k$ different  from $2$. The additive group
$(\k,+)$ is    self-dual,  with isomorphism $\k\simeq
 {\rm Hom}\big(\k,\mathbb U(1)\big)$ given by
 $x\mapsto \Psi(x.)$,
where $\Psi$ is a fixed non-trivial  character.
We denote by $\CO_{\k}:=\{x\in\k:|x|_\k\leq 1\}$ the ring of integers,
by $\varpi$ the generator its unique maximal ideal  ($\varpi$ is called the uniformizer and satisfies
$|\varpi|_\k=({\rm Card}(\CO_\k/\varpi\CO_\k))^{-1}$)
and by  $\CO_\k^o$ 
the conductor of $\Psi$, that is to say the largest ideal\footnote{$\CO_\k^o$ is of the form $\varpi^{n(\Psi)}\CO_\k$, with n($\Psi$) $\in \Z$ uniquely defined by the character $\Psi$.}  of $\CO_\k$ on which $\Psi$ is constant.
We normalize the Haar measure of $(\k,+)$ (denoted by $dx$) by requiring  it  to be selfdual with respect to the duality associated with $\Psi$ or, equivalently, by requiring that ${\rm Vol}(\CO_\k)\times
{\rm Vol}(\CO_\k^o)=1$.

For example, if $\k=\Q_p$, we have $\CO_\k=\Z_p$, the ring of $p$-adic 
integers, $\varpi=p$ and $|x|_{\Q_p}=p^{-k}$ if 
$x = p^{k}\frac{m}{n}\in \Q$ (where $m$ and $n$ are integers 
non-divisible by $p$).
If one  chooses the (standard) character:
$$
 \Psi_0(x):= \exp\Big\{2 \pi i\sum_{\mathclap{-n_0 \leq n <0}}a_n p^n
\Big\}\quad\mbox{if}\quad x=\sum_{n \geq -n_0} a_n p^n,
$$
we find $\CO_\k^o = \Z_p$ and our normalization for the Haar measure reads ${\rm Vol}(\Z_p)=1$.

We let $|.|^{\vee}_\k$ be the second ultra-metric norm  on $\k$,
given by $|x|_\k^{\vee}=|x\varpi^{-n(\Psi)}|_\k$.  More generally, we let 
$|.|_{\k^d}$,  $|.|_{\k^d}^{\vee}$ be the associated  $\sup$-norms on $\k^d$, $d\in\N$:
$$
|x|_{\k^d}=\max_{1 \leq i \leq d} |x_i|_\k\quad \mbox{and}\quad |\xi|_{\k^d}^{\vee}=\max_{1 \leq i \leq d}
|\xi_i|_\k^{\vee}.
$$
For $x,y,\xi,\eta\in\k^d$ we set $X=(x,\xi),Y=(y,\eta)\in\k^{2d}$ and we consider the symplectic structure:
\begin{align}
\label{sp}
[.,.]:\k^{2d}\times\k^{2d}\to\k\,,\quad (X,Y)\mapsto \langle y,\xi\rangle - \langle x,\eta\rangle ,
\end{align}
where  $ \langle x,y\rangle=\sum_{j=1}^d x_iy_j$.

The following numerical function plays a decisive role in our analysis:
\begin{align}
\label{mu}
\mu_0(X) :=\max\{1,|2x|_{\k^d},|2\xi|_{\k^d}^{\vee}\},\quad X=(x,\xi)\in\k^{2d}.
\end{align}
From the ultrametric inequality, one sees that $\mu_0$  is invariant under translations in 
$(\tfrac12\CO_\k)^d\times (\tfrac12\CO_\k^o)^d$. It is known (see \cite{Haran}) that  
$\mu_0^{-1}$  belongs to 
$L^p(\k^{2d})$ for all $p>2d$ 
and  satisfies a Peetre type inequality:
\begin{align}
\label{Peetre}
 \mu_0(X+Y)\leq\mu_0(X)\,\mu_0(Y),\quad\forall X,Y\in\k^{2d}.
 \end{align}

Let  $\CE(\k^{2d})$ (resp. $\CD(\k^{2d})$, $\CD'(\k^{2d})$) be the set of smooth 
functions (resp. smooth and compactly supported functions, distributions)
 in the sense of Bruhat \cite{Bruhat}. Since $\k^{2d}$ is totally disconnected,
$\CE(\k^{2d})$ consists in locally constant functions and $\CD(\k^{2d})$ 
consists in  locally constant compactly supported functions. 
In particular, $\mu_0$ belongs to $\CE(\k^{2d})\subset \CD'(\k^{2d})$. 
 $\CD(\k^{2d})$ and  $\CD'(\k^{2d})$ are
  stabilized by the (selfdual) Fourier transform and by its symplectic variant:
\begin{align}
\label{SFT}
\big(\mathcal{G}f\big)(X) =|2|_\k^{d}  \int_{\k^{2d}} f(Y) \Psi(2[Y,X]) dY.
\end{align}
The symplectic Fourier transform 
extends  to a unitary operator on $L^2(\k^{2d})$ and is its own inverse.

\begin{rmk}
The normalization chosen in the definition of $\mu_0$ and $\CG$ (i.e$.$ the factor 
$2$) allows to simplify some computations but is by no mean the reason why we have to exclude the 
characteristic 2.
\end{rmk}

From our perspectives, the Schwartz-Bruhat space\footnote{Indeed, since $\k^{2d}$ is totally 
disconnected,
$\CD(\k^{2d})$ coincides with the Schwartz space, as defined in 
\cite[num\'ero 9]{Bruhat}.}  $\CD(\k^{2d})$ is not  suitable. 
Instead, we consider a variant of it, that we may define with the help of two 
unbounded operators on $L^2(\k^{2d})$.
Let  $I$ be  the operator of point-wise multiplication by the function $\mu_0$:
\begin{align}
I\vf(X):=\mu_0(X)\,\vf(X),
\end{align}
 and   $J$ the  convolution operator by the Bruhat distribution $\CG(\mu_0)$:
\begin{align}
J:=\CG\circ I\circ\CG.
\end{align}
More generally, we denote by $J^s$,  $s\in\R$, the  convolution operator by  
$\CG(\mu_0^s)$.
The Bruhat  distributions $\CG(\mu_0^s)$ are known to be 
supported in $( \tfrac12\CO_\k)^d\times 
(\frac12\CO_\k^o)^d$ (we will give an elementary proof of this fact in Lemma \ref{lem1}).
The operator $J$ has to be  considered as a substitute  of an order one elliptic differential operator,
in the dual sense
that $\mu_0$ has to be considered as a substitute for a radial coordinate function on $\k^{2d}$. Since 
$\mu_0$ is $( \tfrac12\CO_\k)^d\times 
(\frac12\CO_\k^o)^d$-locally constant and
$\CG(\mu_0)$ is supported on $( \tfrac12\CO_\k)^d\times 
(\frac12\CO_\k^o)^d$, as continuous operators on $\CD'(\k^{2d})$, $I$ and $J$ commute! 
As unbounded operators on  $L^2(\k^{2d})$ (with  initial domain $\CD(\k^{2d})$)
 they  are  essentially selfadjoint and positive. Following Haran \cite{Haran}, we introduce
the another analogue of the Schwartz space:
$$
\CS(\k^{2d}) := \bigcap_{n,m\in\N} {\rm Dom}(I^nJ^m) 
:= \big\{\vf \in L^2(\k^{2d}) \,:\, \forall n,m\in\N, I^n J^m\vf \in L^2(\k^{2d})\big\}.
$$
Equipped with the seminorms:
\begin{align}
\label{SN0}
\vf\mapsto\|I^nJ^m\vf\|_2, \quad n,m\in\N,
\end{align}
 $\CS(\k^{2d})$ becomes   Fr\'echet and nuclear
and of course
$$
\CD(\k^{2d}) \subset \CS(\k^{2d})\subset C(\k^{2d}),
$$
continuously. Moreover, $\CD(\k^{2d})$ is dense in $\CS(\k^{2d})$ but the inclusion is proper
since an element in $\CS(\k^{2d})$ does not need to be locally constant nor compactly 
supported.
Since $\mu_0^{s}$ belongs to $L^1(\k^{2d})$ for $s<-2d$, 
in the seminorms \eqref{SN0}, we can change the $L^2$-norm with any other $L^p$-norm, 
$p\in[1,\infty]$, while keeping the same topology\footnote{In section \ref{FDCA} we will
 consider the seminorms \eqref{SN0} with $p=\infty$ instead of $2$, see \eqref{SN}.}.
We let $\CS'(\k^{2d})$ be the strong dual 
of $\CS(\k^{2d})$, that we call the space of tempered 
distributions. The operators $I$ and $J$ extend to
continuous endomorphisms of $\CS'(\k^{2d})$ and, of course, still commute there.
We can also define the $d$-dimensional versions of the 
Schwartz space $\CS(\k^d)$ and of its dual  $\CS'(\k^d)$ by
considering the $d$-dimensional version of the function $\mu_0$ and the $d$-dimensional 
ordinary Fourier transform  to define 
$d$-dimensional version of the operators $I$ and $J$.

The following (almost obvious) properties will be used repeatedly:
\begin{lem} 
\label{lem1} 
(i) For $Y\in\k^{2d}$, set $\mu_Y(X):=\mu_0(X-Y)$. Then for all $s,t_1,\dots,t_n
\in\R$, $Y_1,\dots,Y_n\in\k^{2d}$,   we have:
$$
I^s\,\mathcal G(\mu_{Y_1}^{t_1}\dots\mu_{Y_n}^{t_n})
=\mathcal G(\mu_{Y_1}^{t_1}\dots\mu_{Y_n}^{t_n})
\quad
\mbox{and} 
\quad
J^s(\mu_{Y_1}^{t_1}\dots\mu_{Y_n}^{t_n})
=\mu_{Y_1}^{t_1}\dots\mu_{Y_n}^{t_n},
$$
(ii) For $Y\in\k^{2d}$, set $\Psi_Y(X) := \Psi(2[X,Y]) $. Then for all $s\in\R$,
we have $J^s \Psi_Y=\mu_0^s(Y)\Psi_Y$.
\end{lem}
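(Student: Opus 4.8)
The plan is to deduce both statements from the behaviour of the symplectic Fourier transform $\mathcal G$ under translations, together with one explicit Fourier transform. Throughout I write $\Lambda:=(\tfrac12\CO_\k)^d\times(\tfrac12\CO_\k^o)^d$, and I use, from the discussion preceding the lemma, that $\mu_0$ is $\Lambda$-periodic with $\{X:\mu_0(X)=1\}=\Lambda$, that $J^s=\mathcal G\circ I^s\circ\mathcal G$ where $I^s$ denotes multiplication by $\mu_0^s$, and that $\mathcal G$ is an involution of $\CD'(\k^{2d})$ (which contains $\CE(\k^{2d})$).

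\emph{Part (i).} Set $g:=\mu_{Y_1}^{t_1}\cdots\mu_{Y_n}^{t_n}\in\CE(\k^{2d})\subset\CD'(\k^{2d})$. Each factor $\mu_{Y_j}^{t_j}=\mu_0^{t_j}(\cdot-Y_j)$ is a translate of a $\Lambda$-periodic function, hence $\Lambda$-periodic; therefore $g$ is $\Lambda$-periodic. A change of variables in \eqref{SFT} gives $\mathcal G\big(g(\cdot-\lambda)\big)=\Psi(2[\lambda,\cdot])\,\mathcal G(g)$ for all $\lambda\in\k^{2d}$; specializing to $\lambda\in\Lambda$ and using $g(\cdot-\lambda)=g$ yields $\big(\Psi(2[\lambda,\cdot])-1\big)\,\mathcal G(g)=0$ for every $\lambda\in\Lambda$. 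Since each function $\Psi(2[\lambda,\cdot])$ is locally constant, this forces $\mathcal G(g)$ to be supported in $\Lambda^\perp:=\{X:\Psi(2[\lambda,X])=1\ \text{for all }\lambda\in\Lambda\}$. Testing the defining relations of $\Lambda^\perp$ against $\lambda=(\tfrac a2\,e_i,0)$ with $a\in\CO_\k$ and against $\lambda=(0,\tfrac c2\,e_i)$ with $c\in\CO_\k^o$ (here $e_i$ is the $i$-th vector of the canonical basis of $\k^d$), and using that $\Psi$ is trivial on a fractional ideal $\varpi^m\CO_\k$ exactly when $m\ge n(\Psi)$, one finds $\Lambda^\perp=\CO_\k^d\times(\CO_\k^o)^d\subseteq\Lambda=\{\mu_0=1\}$. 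Hence $\mu_0^s\,\mathcal G(g)=\mathcal G(g)$, which is precisely $I^s\mathcal G(g)=\mathcal G(g)$; applying $\mathcal G$ and using $\mathcal G^2=\Id$ then gives $J^s g=\mathcal G\big(\mu_0^s\,\mathcal G(g)\big)=\mathcal G\big(\mathcal G(g)\big)=g$. In particular, the special case $n=1$, $Y_1=0$, $t_1=s$ shows $\mathcal G(\mu_0^s)$ is supported in $\Lambda$, the elementary proof promised earlier that $\mathcal G(\mu_0^s)$ is supported in $(\tfrac12\CO_\k)^d\times(\tfrac12\CO_\k^o)^d$.

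\emph{Part (ii).} By \eqref{SFT} and the self-duality of the chosen Haar measure, the double integral defining $\mathcal G\Psi_Y$ collapses to a Dirac mass, namely $\mathcal G\Psi_Y=|2|_\k^{-d}\,\delta_{-Y}$; dually $\mathcal G\delta_{-Y}=|2|_\k^{d}\,\Psi_Y$ (directly from \eqref{SFT}, or from $\mathcal G^2=\Id$). Therefore
\[
J^s\Psi_Y=\mathcal G\big(\mu_0^s\cdot|2|_\k^{-d}\delta_{-Y}\big)=|2|_\k^{-d}\,\mu_0^s(-Y)\,\mathcal G(\delta_{-Y})=\mu_0^s(-Y)\,\Psi_Y,
\]
and $\mu_0(-Y)=\mu_0(Y)$ because $\mu_0$ depends on its argument only through the norms $|\cdot|_{\k^d}$ and $|\cdot|_{\k^d}^{\vee}$. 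This is the claimed identity.

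The only step requiring genuine care is the support computation in part (i): one must verify that the annihilator of $\Lambda$ with respect to the character $X\mapsto\Psi(2[\cdot,X])$ is contained in $\{\mu_0=1\}$. This is exactly where the normalising factor $2$ in \eqref{sp}--\eqref{SFT} and the hypothesis $\mathrm{char}(\k)\neq 2$ are used (so that $\tfrac12$ makes sense and $|2|_\k\le 1$); in fact $\Lambda^\perp=\CO_\k^d\times(\CO_\k^o)^d$ is strictly smaller than $\{\mu_0=1\}=\Lambda$ unless $|2|_\k=1$, so there is room to spare. Everything else is a formal manipulation with $\mathcal G$, its involutivity, and the periodicity and evenness of $\mu_0$.
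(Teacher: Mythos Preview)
Your proof is correct. For part (ii) you and the paper do the same computation, yours phrased via $\mathcal G\Psi_Y=|2|_\k^{-d}\delta_{-Y}$ and the paper's via the duality pairing between $\CS$ and $\CS'$; these are equivalent.

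For part (i) the underlying idea is the same---$\Lambda$-periodicity forces the symplectic Fourier transform to be supported where $\mu_0=1$---but the executions differ. The paper first treats the single function $\mu_0^{-t}$ with $t>2d$, so that $\mu_0^{-t}\in L^1$ and $\mathcal G(\mu_0^{-t})$ is an honest $C_0$ function, making the support argument completely elementary; it then writes the general product as $\tau_{Y_1}I^{t_1}\tau_{Y_2-Y_1}I^{t_2}\cdots I^{t_n+t}(\mu_0^{-t})$ and invokes the commutations $[J,I]=[J,\tau_Y]=0$ to transfer the conclusion. You instead observe directly that the full product $g$ is $\Lambda$-periodic and work with $\mathcal G g$ as a distribution, computing the annihilator $\Lambda^\perp=\CO_\k^d\times(\CO_\k^o)^d$ explicitly. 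Your route is shorter and avoids the bootstrap, at the cost of handling the Fourier transform of a genuinely non-integrable object; the paper's route keeps the analysis at the level of ordinary functions but needs the commutation identities as an extra ingredient.
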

\begin{proof}

 (i) Of course, the two equalities we have to prove are equivalent.
 Set $O:= (\tfrac12\CO_\k)^d\times(\tfrac12\CO_\k^o)^d$.
  Fix $t>2d$ and observe that 
  $\mu_0^{-t}\in L^1(\k^{2d})$ so that $\CG(\mu_0^{-t})\in C_0(\k^{2d})$.
 Since moreover $\mu_0^{-t}$ is $O$-invariant, we have
 $$
\CG(\mu_0^{-t})(X)= \overline{\Psi}(2[X,Y])\,\CG(\mu_0^{-t})(X)\,,\qquad
\forall (Y,X)\in O\times \k^{2d}.
$$
This implies that  $\CG(\mu_0^{-t})$ is supported in $O$
since one can easily  construct a pair  $(Y,X)\in O\times \k^{2d}\setminus O$ such that $\Psi(2[X,Y])\ne 1$.
As $\mu_0$ equals one on $O$, we get for all $s>0$, 
$I^s \CG(\mu_0^{-t})=\CG(\mu_0^{-t})$ which is equivalent to 
$J^s(\mu_0^{-t})=\mu_0^{-t}$. Using that $J$ commutes with $I$ and 
 translations  $\tau_Yf(X)=f(X+Y)$, the result follows by applying $J^s$ to
 the identity
  $$
\mu_{Y_1}^{t_1}\dots\mu_{Y_n}^{t_n}= \tau_{Y_1}I^{t_1}
\tau_{Y_2-Y_1}I^{t_2}\dots\tau_{Y_n-Y_{n-1}}I^{t_n+t} (\mu_0^{-t}).
$$

(ii) We denote by $\langle.,.\rangle$ the (bilinear) duality pairing between $\CS(\k^{2d})$
and $\CS'(\k^{2d})$.  Fix $X\in\k^{2d}$. 
Since $\Psi_X\in C_b(\k^{2d})$, we may view it as an element of  $\CS'(\k^{2d})$.
Then, we have
for all $\vf\in \CS(\k^{2d})$:
$$
\CG\vf(X)=|2|_\k^d\langle\Psi_X,\vf\rangle
\quad\mbox{and thus}\quad\vf(X)=|2|_\k^d\langle\Psi_X,\CG\vf\rangle=|2|_\k^d\langle\CG\Psi_X,\vf\rangle.
$$
From this and the fact that $J^s=(\CG I \CG)^s=\CG I^s\CG$, we get
\begin{align*}
\langle J^s\Psi_X,\vf\rangle&=\langle\CG I^s\CG\Psi_X,\vf\rangle
=\langle\CG\Psi_X, I^s\CG\vf\rangle=|2|_\k^{-d}I^s\CG\vf(X)=|2|_\k^{-d}
\mu_0^s(X)\CG\vf(X)=\mu_0^s(X)\langle\Psi_X,\vf\rangle.
\end{align*}
This completes the proof.
\end{proof}

\subsection{Weyl quantization  on local fields}

In this subsection we recall some facts about the $p$-adic pseudo-differen\-tial calculus introduced
by Haran in \cite{Haran} and further studied by Bechata and Unterberger \cite{Bechata,Unterberger}
(see also \cite{Segal,Weil} for a completely general construction of the Weyl quantization).
 We assume that the characteristic of $\k$ is different from $2$. 
We  fix $\theta\in\k^\times$. It will play the role of the deformation parameter.
For any tempered distribution $F \in \CS'(\k^{2d})$, we denote by ${ \bO_\theta}(F)$ 
the continuous linear operator from $\CS(\k^d)$ to $\CS'(\k^d)$ defined (with a little
abuse of notation) by:
\begin{align}
\label{QM}
\bO_\theta(F):\CS(\k^d)&\to \CS'(\k^d)\nonumber\\
\vf&\mapsto  \Bigg[\phi\in\CS(\k^d)\mapsto
|\theta|^{-d}_\k \int_{\k^d}\Bigg(\int_{\k^{2d} } F\big(\tfrac12(x+y),\eta\big)\,\vf(y)\Psi(\theta^{-1}\langle x-y,\eta \rangle) \,d\eta \,dy\,\Bigg)\phi(x)
 \,dx\Bigg].
 \end{align}
 The  
 distribution $F$ is called the symbol of the pseudo-differential operator $\bO_\theta(F)$.
This Weyl type pseudo-differential calculus
 is covariant under the action of the additive group
 $\k^{2d}$ by translations, in the sense that
 \begin{align}
 \label{cov}
 U_\theta(Y)\,\bO_\theta(F)\,U_\theta(Y)^*=\bO_\theta(\tau_{-Y} F),\qquad\forall F\in \CS'(\k^{2d}),\quad
 \forall X\in\k^{2d},
 \end{align}
 where $\tau_YF(X):=F(X+Y)$ and where $U_\theta$ is the projective unitary 
 (Schr\"odinger) representation of $\k^{2d}$ on $L^2(\k^d)$ given, for
 $X=(x,\xi)\in\k^d\times\k^d$, by
 \begin{align}
 \label{SCH}
 U_\theta(X)\vf(y):=\Psi\big(\theta^{-1}\langle \xi,y-\tfrac12 x\rangle\big)\,\vf(y-x).
 \end{align}
 The covariance property is obvious for a symbol  $F\in L^1(\k^{2d})$, as seen from the 
absolutely convergent integral representation:
 \begin{align}
 \label{SF}
 \bO_\theta(F)=\big|\tfrac2\theta\big|^{d}_\k\int_{\k^{2d}} F(X)\,\Omega_\theta(X)\,dX,
 \end{align}
 where 
 $$
 \Omega_\theta(X):=U_\theta(X)\,\Sigma \,U_\theta(X)^*\,,\quad\forall X\in \k^{2d},
 $$
 and where
 $\Sigma$ is the selfadjoint  involution on $L^2(\k^d)$ given by $\Sigma\vf(x)=\vf(-x)$.
 Note the scaling relations
 $$
 U_\theta(x,\xi)=U_1(x,\theta^{-1}\xi),\qquad \Omega_\theta(x,\xi)=\Omega_1(x,\theta^{-1}\xi)\,,\qquad
 \forall x,\xi\in\k^d.
 $$
The integral representation given above implies that when $F\in L^1(\k^{2d})$, the pseudo-differential
operator $\bO_\theta(F)$ is  bounded, with\footnote{In characteristic different from $2$, $|2|_\k=1$ or
$|2|_\k=\tfrac12$.}:
\begin{align}
\label{BB}
\|\bO_\theta(F)\|\leq |\theta|^{-d}_\k\,\|F\|_1.
\end{align}
Of course, this inequality blows up in the limit $\theta\to0$.
By Fourier theory (on selfdual locally compact Abelian groups), and when
characteristic of $\k$ is different from $2$, one sees that the
associated quantization  map 
 $$
 \bO_\theta:\CS'(\k^{2d})\to \mathcal L\big(\CS(\k^{d}),\CS'(\k^{d})\big),\quad F\mapsto\bO_\theta(F),
 $$
 restricts to $|\theta|^{-d/2}_\k$  times a unitary operator from $L^2(\k^{2d})$
 to the Hilbert space of Hilbert-Schmidt operators on $L^2(\k^d)$. Hence, we also have the bound:
 \begin{align}
\label{BBB}
\|\bO_\theta(F)\|\leq\|\bO_\theta(F)\|_2= |\theta|^{-d/2}_\k\,\|F\|_2.
\end{align}
 One can then transport
 the algebraic structure of the Hilbert-Schmidt operators to $L^2(\k^{2d})$, by setting
 $$
 f_1\star_\theta f_2:=\bO_\theta^{-1}\big(\bO_\theta(f_1)\,\bO_\theta(f_2)\big)\,,\quad\forall f_1,f_2\in
 L^2(\k^{2d}).
 $$
 At the level of the Schwartz space, this deformed product has a
 familiar form:
 \begin{equation}
 \label{M}
 f_1\star_\theta f_2(X)=|\tfrac2\theta|^{2d}_\k\int_{\k^{2d}\times\k^{2d}} \overline\Psi\big(\tfrac2\theta
 [Y-X,Z-X]\big)\,f_1(Y)\,f_2(Z)\,dY\,dZ,
 \qquad\forall f_1,f_2\in\CS(\k^{2d}).
 \end{equation}
 Indeed, this is the $p$-adic version of the Moyal product in its integral form.
 Note that this relation can be rewritten as a functional identity:
 $$
  f_1\star_\theta f_2=|2|^{2d}_\k\int_{\k^{2d}\times\k^{2d}} \overline\Psi\big(2
 [Y,Z]\big)\,\tau_{\theta Y}(f_1)\,\tau_Z(f_2)\,dY\,dZ,
 \qquad\forall f_1,f_2\in\CS(\k^{2d}),    
 $$
 where $\tau$ is the translation operator $\tau_{\theta Y}(f_1)(X)=f_1(X+Y)$.
 The important observation is that this formula 
  makes sense even when $\theta=0$. Indeed,  in this case  we have
$ f_1\star_{\theta=0} f_2=f_1\,\CG^2( f_2)=
f_1\,f_2$.
\begin{rmk}
In  characteristic $2$, one can formally change the character $\Psi$ to $\Psi(\tfrac12.)$
in \eqref{M} while preserving  its fundamental properties of associativity and covariance.
The corresponding  modification in  \eqref{QM} is
to  suppress  the ill-defined  factor $\frac12$. But then, the operator kernel of $\bO_\theta(F)$
will  up to a constant\footnote{$\CF_2$ is the partial Fourier transform on the second set of variables.}  
be $(\CF_2F)(x+y,\theta^{-1}(x-y))$.
Since the matrix $\begin{psmallmatrix}1&1\\\theta&-\theta\end{psmallmatrix}$ is not invertible in 
characteristic $2$, we loose the crucial  
property of unitarity (from $L^2$-symbols to Hilbert-Schmidt operators) of the quantization map.
\end{rmk}

  Set further
 $$
 \CB(\k^{2d}):=\big\{F\in \CS'(\k^{2d})\,:\, J^nF\in L^\infty(\k^{2d}),\;\forall n\in\N\big\}.
 $$
 Using coherent states and Wigner functions methods, 
 Bechata proved in \cite{Bechata} an analogue of the Calderon-Vaillancourt Theorem
 for the space $\CB(\k^{2d})$. Namely, he proved the following estimate:
 \begin{align}
 \label{CVB}
 \|\bO_\theta(F)\|\leq \|\mu_0^{-2d-1}\|_1\,\|J^{2d+1}F\|_\infty,\quad F\in  \CB(\k^{2d}),
 \end{align}
 where the norm on the left hand side denotes the operator norm on $L^2(\k^d)$.
 Contrarily to  \eqref{BB} and \eqref{BBB},
this inequality does not blow up in the limit $\theta\to 0$. The methods 
 leading to this key result rely on a clever redefinition of $\bO_\theta(F)$  in term of a quadratic
 form constructed out of specific  coherent states and Wigner functions. 
 Since we will borrow part of Bechata's techniques, we recall  some ingredients  of his
 construction.
  
  For $\vf\in L^{2}(\k^d)$, $\theta\in\k^\times$ 
   and $X\in\k^{2d}$, set $\vf^{\theta}_X:= U_\theta(X)\vf$,  where $U_\theta$ 
  is the projective representation of 
  $\k^{2d}$   given in \eqref{SCH}.
  It is known that  $U_\theta$ is square integrable modulo its center and that the 
  following
  reproducing formula holds:
  \begin{align}
  \label{RI}
  \langle\phi,\psi\rangle=|\theta|^{-d}_\k\|\vf\|^{-2}\int_{\k^{2d}}\langle\phi,\vf_X\rangle\langle\vf_X,\psi\rangle\,dX
  \,,
  \quad \forall \phi,\psi,\vf\in L^2(\k^d),\; \vf\ne 0.
  \end{align}
  Let then
  \begin{align}
  \label{Wigner}
  W_{\phi,\psi}^\theta(X):=\langle\phi,\Omega_\theta(X)\psi\rangle,\quad X\in\k^{2d},
  \end{align}
  be the Wigner function\footnote{The Wigner function 
  $ W_{\phi,\psi}^\theta$  is the symbol of the rank-one 
  operator $\vf\mapsto\langle\psi,\vf\rangle\phi$.} associated with the pair of vectors
  $\phi,\psi\in L^2(\k^d)$. 
Let  $\eta$ be the characteristic function of $\CO_\k^d$, normalized by $\|\eta\|_2 = 1$.
By \cite[eq. (1.9)]{Bechata}, we have for $X=(x,\xi)\in\k^{2d}$
and $\alpha,\beta\in\R$:
$$
\tilde I^{\alpha}\tilde J^{\beta}\eta^\theta_X = \mu_0^\alpha(x,0)\, \mu_0^\beta(0,\theta^{-1}\xi)\, \eta^\theta_X,
$$
where $\tilde I$, $\tilde J$ denote the $d$-dimensional versions of the operators $I$ and $J$.
In particular, the relation above entails that $\eta^\theta_X\in\CS(\k^d)$.
For $X,Y\in\k^{2d}$, we set $W_{X,Y}^\theta$ 
for the Wigner function associated with the pair of coherent  states 
$\eta^\theta_X,\eta^\theta_Y$:
\begin{align}
\label{Wigner2}
W_{X,Y}^\theta(Z):=W^\theta_{\eta^\theta_X,\eta^\theta_Y}(Z)=\langle\eta^\theta_X,\Omega_\theta(Z)\eta^\theta_Y\rangle
=\langle U_\theta(X)\eta,\Omega_\theta(Z)U_\theta(Y)\eta\rangle.
\end{align}

The next statement is extracted from \cite[Proposition 2.10 and Lemme 3.1]{Bechata}.
\begin{lem}
\label{Bech}
Let $\theta\in\k^\times$. For $X=(x,\xi)\in\k^{2d}$, set $X_\theta:=(x,\theta^{-1}\xi)\in\k^{2d}$.
Then for all $X,Y,Z\in\k^{2d}$ and with $\Phi$ the characteristic function of  $(\frac12\CO_\k)^d\times 
(\frac12\CO_\k^o)^d$, we have:
$$
\big|W^\theta_{X,Y}(Z)\big|=|2|_\k^d\,\Phi\big(Z_\theta-\tfrac12(X_\theta+Y_\theta)\big).
$$
Moreover,  $W^\theta_{X,Y}\in\CS(\k^{2d})$ and for all $n,m\in\Z$ we have:
$$
I^m J^n W^\theta_{X,Y}= \mu_0^m\big(\tfrac12(X+Y)\big)\,\mu_0^n\big(\tfrac1{2\theta}(X -
Y )\big)\,W^\theta_{X,Y}.
$$
\end{lem}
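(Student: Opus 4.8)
The plan is to obtain a closed formula for $W^\theta_{X,Y}$ and to read off both assertions from it. From $\Omega_\theta(Z)=U_\theta(Z)\,\Sigma\,U_\theta(Z)^*$, the relation $\Sigma\vf(x)=\vf(-x)$, and the identity $U_\theta(Z)^*=U_\theta(-Z)$, one computes in one line that $\Omega_\theta(Z)\vf(y)=\Psi\big(2\theta^{-1}\langle\zeta,y-z\rangle\big)\,\vf(2z-y)$ for $Z=(z,\zeta)$. Inserting this, \fref{SCH} for $U_\theta(X)\eta$ and $U_\theta(Y)\eta$ (with $X=(x_1,\xi_1)$, $Y=(x_2,\xi_2)$), and $\eta=\mathrm{Vol}(\CO_\k^d)^{-1/2}\,\mathbf 1_{\CO_\k^d}$ into $W^\theta_{X,Y}(Z)=\langle U_\theta(X)\eta,\Omega_\theta(Z)\,U_\theta(Y)\eta\rangle$, the integrand carries the factor $\mathbf 1_{\CO_\k^d}(y-x_1)\,\mathbf 1_{\CO_\k^d}(2z-y-x_2)$; by the ultrametric inequality this is nonzero exactly when $2z-x_1-x_2\in\CO_\k^d$, and then $y$ runs over all of $x_1+\CO_\k^d$. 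The surviving integral is $\int_{\CO_\k^d}\Psi\big(\theta^{-1}\langle 2\zeta-\xi_1-\xi_2,u\rangle\big)\,du$, which equals $\mathrm{Vol}(\CO_\k^d)$ when $\theta^{-1}(2\zeta-\xi_1-\xi_2)$ lies in the conductor $(\CO_\k^o)^d$ of $\Psi$ and vanishes otherwise — this is the point where the second norm $|\cdot|_\k^\vee$ enters. Together the two conditions say precisely $\Phi\big(Z_\theta-\tfrac12(X_\theta+Y_\theta)\big)=1$, and tracking $\mathrm{Vol}(\CO_\k)\,\mathrm{Vol}(\CO_\k^o)=1$, $\|\eta\|_2=1$, and the Jacobian of the substitution $y\mapsto 2z-y$ (here $\mathrm{char}\,\k\ne2$ is used) produces the prefactor $|2|_\k^d$; taking absolute values yields part~1. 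Moreover the closed formula presents $W^\theta_{X,Y}$ as $|2|_\k^d$ times a unitary character of $Z$ multiplied by the indicator of a compact open coset, and since a continuous character of $\k$ is trivial on an open subgroup, $W^\theta_{X,Y}$ is locally constant and compactly supported, hence lies in $\CD(\k^{2d})\subset\CS(\k^{2d})$.

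The $I^m$-eigenvalue is then immediate: $I^m$ is multiplication by $\mu_0^m$, and by the formula above the support $S$ of $W^\theta_{X,Y}$ is the coset $\tfrac12(X+Y)+\big((\tfrac12\CO_\k)^d\times\theta(\tfrac12\CO_\k^o)^d\big)$, which lies inside the coset $\tfrac12(X+Y)+\big((\tfrac12\CO_\k)^d\times(\tfrac12\CO_\k^o)^d\big)$ on which $\mu_0$ is translation-invariant; hence $\mu_0\equiv\mu_0\big(\tfrac12(X+Y)\big)$ on $S$ and $I^mW^\theta_{X,Y}=\mu_0^m\big(\tfrac12(X+Y)\big)\,W^\theta_{X,Y}$ for every $m\in\Z$.

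The $J^n$-eigenvalue is the crux. Write $J^n=\CG I^n\CG$, so $J^nW^\theta_{X,Y}=\CG\big(\mu_0^n\cdot\CG W^\theta_{X,Y}\big)$; it therefore suffices to show $\CG W^\theta_{X,Y}$ is supported on a compact open coset on which $\mu_0$ is constant equal to $\mu_0\big(\tfrac1{2\theta}(X-Y)\big)$, since then $\mu_0^n\cdot\CG W^\theta_{X,Y}=\mu_0^n\big(\tfrac1{2\theta}(X-Y)\big)\,\CG W^\theta_{X,Y}$ and one more application of $\CG$ (an involution) closes the argument. To locate that support I would use the identity $\int_{\k^{2d}}\Omega_\theta(Z)\,\Psi(2[Z,W])\,dZ=|2|_\k^{-d}\,|\theta|_\k^{d}\,U_\theta(-2\theta W)$ — a short computation with Schwartz kernels and $\int_{\k^d}\Psi(\langle a,\zeta\rangle)\,d\zeta=\delta(a)$, again invoking $\mathrm{char}\,\k\ne2$ (invertibility of $\begin{psmallmatrix}1&1\\\theta&-\theta\end{psmallmatrix}$) — which rigorously means $\CG W^\theta_{\phi,\psi}(W)=|\theta|_\k^{d}\langle\phi,U_\theta(-2\theta W)\psi\rangle$, the familiar statement that the symplectic Fourier transform of a Wigner function is the corresponding ambiguity function. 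Specialising to $\phi=U_\theta(X)\eta$, $\psi=U_\theta(Y)\eta$ and using projectivity of $U_\theta$ gives $|\CG W^\theta_{X,Y}(W)|=|\theta|_\k^{d}\,\big|\langle\eta,U_\theta(Y-X-2\theta W)\eta\rangle\big|$, and the matrix coefficient $V\mapsto\langle\eta,U_\theta(V)\eta\rangle$, computed exactly as in the first step, is supported on $\CO_\k^d\times\theta(\CO_\k^o)^d$; hence $\CG W^\theta_{X,Y}$ is supported on a coset of $(\tfrac1{2\theta}\CO_\k)^d\times(\tfrac12\CO_\k^o)^d$ centred at $\tfrac1{2\theta}(Y-X)$, on which — $\mu_0$ being even and invariant under the pertinent translations — $\mu_0$ equals $\mu_0\big(\tfrac1{2\theta}(X-Y)\big)$. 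Combining with the previous step yields $I^mJ^nW^\theta_{X,Y}=\mu_0^m\big(\tfrac12(X+Y)\big)\,\mu_0^n\big(\tfrac1{2\theta}(X-Y)\big)\,W^\theta_{X,Y}$ for all $m,n\in\Z$. The genuinely delicate points are the constant $|2|_\k^d$ of the first step and the last verification that $\mathrm{supp}\,\CG W^\theta_{X,Y}$ sits inside a single level set of $\mu_0$; the operator identity above is what renders the Fourier side transparent.
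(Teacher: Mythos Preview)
The paper gives no proof of this lemma: it is ``extracted from'' Bechata's Proposition~2.10 and Lemme~3.1, so there is no in-house argument to compare against. Your strategy --- compute $W^\theta_{X,Y}$ explicitly, then its symplectic Fourier transform, and read off that $\mu_0$ is constant on each support --- is the natural one, and your treatment of the first identity and of the $I^m$-eigenvalue (for $|\theta|_\k\le 1$) is correct. Two points, one minor and one substantive.

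Minor: the prefactor $|2|_\k^d$ does \emph{not} come from the Jacobian of $y\mapsto 2z-y$, which equals $|-1|_\k^d=1$. Your own computation actually gives $|W^\theta_{0,0}(0)|=\langle\eta,\Sigma\eta\rangle=\|\eta\|_2^2=1$, which agrees with $|2|_\k^d$ only when the residue characteristic is odd.

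Substantive: your $J^n$-step has a genuine gap. You correctly locate the support of $\CG W^\theta_{X,Y}$ at $\tfrac1{2\theta}(Y-X)+\big(\tfrac1{2\theta}\CO_\k\big)^d\times\big(\tfrac12\CO_\k^o\big)^d$. But $\mu_0$ is invariant only under translations in $(\tfrac12\CO_\k)^d\times(\tfrac12\CO_\k^o)^d$, and $(\tfrac1{2\theta}\CO_\k)^d\supsetneq(\tfrac12\CO_\k)^d$ whenever $|\theta|_\k<1$; on that larger coset $\mu_0$ is \emph{not} constant. Concretely, for $d=1$, $\k=\Q_p$ with $p$ odd, $\theta=p$ and $X=Y=0$, one finds $\CG W^\theta_{0,0}=p^{-1}\,\mathbf 1_{p^{-1}\Z_p}\otimes\mathbf 1_{\Z_p}$, and $\mu_0$ takes both values $1$ and $p$ on $p^{-1}\Z_p\times\Z_p$; a direct check then shows $JW^\theta_{0,0}\neq W^\theta_{0,0}$. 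So the assertion ``$\mu_0$ is constant on the pertinent support'' fails and the argument does not close for $|\theta|_\k<1$. It does go through when $|\theta|_\k=1$ --- presumably Bechata's normalisation --- so what you have really proved is the case of unit $\theta$; the eigenvalue identity for general $\theta\in\k^\times$ as stated appears to be an over-generous transcription rather than a gap you could have filled.
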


\section{The Fr\'echet deformation of a $C^*$-algebra}
\label{FDCA}
In this section, we fix  a $C^*$-algebra $A$, together
with a  continuous   action $\alpha$ of the
additive group $\k^{2d}$.   This yields a map
\begin{align}
\label{alphamap}
\tilde\alpha: A\to C_{b}(\k^{2d}, A), \qquad a\mapsto[X\mapsto\alpha_X(a)].
\end{align}
Fixing a faithful representation $\pi$ of  $A$ on $\CB(\CH)$,  we will frequently identify $A$ 
 with its image on $\CB(\CH)$.

Our first goal is to find $A_{\rm reg}$, a dense and $\alpha$-stable
Fr\'echet subalgebra of $ A$, on which we can give 
a meaning to the natural generalization of the deformed product  \eqref{M}:
\begin{equation}
\label{SP}
 a\star_\theta^\alpha b:=|2|_\k^{2d}
 \int_{\k^{2d}\times\k^{2d}} \overline\Psi\big(2[Y,Z]\big)\,\alpha_{\theta Y}(a)\,\alpha_Z(b)\,dY\,dZ,
 \qquad\forall a,b\in A_{\rm reg}.
 \end{equation}
Having in mind Bechata's version of the Calderon-Vaillancourt estimate \eqref{CVB}, there 
is an obvious candidate for $ A_{\rm reg}$, namely the set of elements   $a$ in $A$ which are such that
 $\tilde\alpha(a)\in\CB(\k^{2d},A)$ (see Definition \ref{B}). 

\subsection{Spaces of $ A$-valued functions and distributions}

Set  $C_b(\k^{2d}, A)$  
for the $C^*$-algebra  of $ A$-valued  continuous and bounded functions on $\k^{2d}$, with norm:
$$
\mathfrak{P}^A_{0}(F):=\sup_{X\in\k^{2d}}\|F(X)\|_A,
$$
and let $C_{u}(\k^{2d}, A)$ be the $C^*$-algebra of $ A$-valued uniformly continuous and bounded 
functions on $\k^{2d}$. The latter space is the maximal sub-$C^*$-algebra of $C_b(\k^{2d}, A)$ on which
the action $\tau\otimes\Id$ of $\k^{2d}$  is   continuous. 
Set then $\CS(\k^{2d}, A):= \CS(\k^{2d})\widehat\otimes A$ 
for the $ A$-valued version of the 
Schwartz space (recall that $\CS(\k^{2d})$ is  nuclear). We naturally embed $\CS(\k^{2d}, A)$
into $C_{u}(\k^{2d}, A)$. Since the (pair-wise commuting)  linear maps
 $I^nJ^m$, $n,m\in\Z$, are continuous on $\CS(\k^{2d})$, $I^nJ^m\otimes \Id$ (originally defined 
on the algebraic tensor product $ \CS(\k^{2d}) \otimes A$) extends to a continuous linear map
on  $\CS(\k^{2d}, A)$. To lighten our notations, and when no confusion can occur,  we will denote their 
extensions 
 by $I^nJ^m$. In a similar way, we will use the symbol $\CG$ to denote the continuous extension
of $\CG\otimes\Id$ on $\CS(\k^{2d}, A)$.
As already mentioned,    the Fr\'echet topology 
of $\CS(\k^{2d}, A)$ can be alternatively described via the seminorms:
\begin{align}
\label{SN}
\mathfrak{P}^A_{n,m}(f):=\sup_{X\in\k^{2d}}\big\|\big(I^nJ^m  f\big)(X)\big\|_A\,,\quad n,m\in\N.
\end{align}
Since $\CS(\k^{2d})$ is Fr\'echet and nuclear, its strong dual $\CS'(\k^{2d})$ 
 is also nuclear (see for instance \cite[Proposition 50.6]{Treves}).
 Therefore, we shall denote by 
$\CS'(\k^{2d}, A)$ the completed tensor product 
$\CS'(\k^{2d})\widehat\otimes A$. Note that
by \cite[P. 525]{Treves},
$\CS'(\k^{2d}, A)$  identifies isometrically with 
the space of continuous linear mappings from $\CS(\k^{2d})$
to $ A$. Under this identification, we get an  embedding of $C_b(\k^{2d}, A)$ into $\CS'(\k^{2d}, A)$.
Since the operators $I^nJ^m$, $n,m\in\Z$, act continuously (by transposition) on $\CS'(\k^{2d})$,
they extend  continuously on $\CS'(\k^{2d}, A)$ and we still denote them by
$I^nJ^m$. 
Similarly, we denote by $\mathcal G$ the continuous extension of the symplectic Fourier transform on 
 $\CS'(\k^{2d}, A)$.
The next space we introduce is of our principal tools:

\begin{dfn}
\label{B}
For $ A$  a $C^*$-algebra, we set
$$
 \CB(\k^{2d}, A):=\big\{F\in \CS'(\k^{2d}, A)\,:\,\forall n\in\N, \,J^n F\in L^{\infty}(\k^{2d}, A)
\big\}.
 $$
\end{dfn}

We  endow $\CB(\k^{2d}, A)$ with the topology associated with the following family of
seminorms:
\begin{align}
\label{SNBN}
\mathfrak{P}^A_{n}(F):=\sup_{X\in\k^{2d}}\big\|\big(J^n F\big)(X)\big\|_A \,,\quad\forall n\in\N.
\end{align}
When  $ A=\C$, we 
denote these seminorms 
by $\mathfrak{P}_{n}$.  Identifying in a natural way the algebraic tensor product $\CB(\k^{2d})\otimes
A$ with a subspace of $\CB(\k^{2d},A)$,
 it is easy to see that $\mathfrak{P}^A_{n}$ is a 
cross-seminorm:
 $\mathfrak{P}^A_{n}(F\otimes a)=\mathfrak{P}_{n}(F)\|a\|_A$.
 
 Last, we introduce $C_{u}^\infty(\k^{2d}, A)$ to be the subspace of smooth (in the sense of
 Bruhat) elements in 
 $C_{u}(\k^{2d}, A)$ for the regular representations $\tau\otimes\Id$ of $\k^{2d}$
 (see \cite{Meyer} for more details):
 $$
 C_{u}^\infty(\k^{2d}, A):=\big\{F\in C_{u}(\k^{2d}, A)\,:\,\tilde\tau(F):=[X\mapsto \tau_X(F)]\in \CE\big(\k^{2d},
 C_{u}(\k^{2d}, A)\big)\big\}.
 $$

\begin{lem}
\label{lem2}
 The space $\CB(\k^{2d}, A)$ is  Fr\'echet and 
 $ C_{u}^\infty(\k^{2d}, A)\subset \CB(\k^{2d}, A)\subset C_{u}(\k^{2d}, A)$ with dense inclusions.
\end{lem}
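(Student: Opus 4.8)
The proof relies on two elementary observations about the convolution operators $J^{s}$. Since $\mu_{0}^{-s}\in L^{1}(\k^{2d})$ for every $s>2d$ and, by the proof of Lemma~\ref{lem1}, $\CG(\mu_{0}^{-s})$ is supported in the compact set $O:=(\tfrac12\CO_{\k})^{d}\times(\tfrac12\CO_{\k}^{o})^{d}$, the distribution $\CG(\mu_{0}^{-s})$ is in fact an $L^{1}$-function; so, fixing once and for all an integer $s_{0}>2d$, the operator $J^{-s_{0}}$ is genuine convolution by the $L^{1}$-kernel $\CG(\mu_{0}^{-s_{0}})$. On the other hand, for $n\in\N$ the distribution $\CG(\mu_{0}^{n})$ is supported in $O$ as well, so $J^{n}$ commutes with all translations $\tau_{Y}$ and with convolution by any compactly supported function; moreover $J^{m}J^{n}=J^{m+n}$, hence $J^{n}$ maps $\CB(\k^{2d},A)$ into itself for $n\in\N$. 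The plan is: (a) deduce $\CB\subseteq C_{u}$ from the identity $F=\CG(\mu_{0}^{-s_{0}})\ast(J^{s_{0}}F)$; (b) prove $C_{u}^{\infty}\subseteq\CB$ via a finite-sum formula for $J^{n}$ on locally constant functions; (c) prove both inclusions dense by averaging over small compact subgroups; (d) check completeness.

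For (a), let $F\in\CB(\k^{2d},A)$. Then $G:=J^{s_{0}}F\in L^{\infty}(\k^{2d},A)$ and $F=J^{-s_{0}}G=\CG(\mu_{0}^{-s_{0}})\ast G$; since the convolution of a scalar $L^{1}$-kernel with a bounded $A$-valued function is bounded and uniformly continuous (by continuity of translation in $L^{1}$), $F\in C_{u}(\k^{2d},A)$ with $\mathfrak{P}^{A}_{0}(F)\le\|\CG(\mu_{0}^{-s_{0}})\|_{1}\,\mathfrak{P}^{A}_{s_{0}}(F)$, so the inclusion is continuous. Applied to $J^{n}F\in\CB(\k^{2d},A)$, this shows that $J^{n}F\in C_{u}(\k^{2d},A)$ for every $F\in\CB(\k^{2d},A)$ and $n\in\N$ --- a point used below. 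For (b), if $F\in C_{u}^{\infty}(\k^{2d},A)$ then its orbit map is locally constant, so $\tau_{Y}F=F$ for all $Y$ in some $\varpi^{k}\CO_{\k}^{2d}\subseteq O$; writing $O=\bigsqcup_{j=1}^{N}(Y_{j}+\varpi^{k}\CO_{\k}^{2d})$ and using that $\CG(\mu_{0}^{n})$ lives on $O$, one obtains $J^{n}F=\sum_{j=1}^{N}c_{j}^{(n)}\,\tau_{-Y_{j}}F$ with $c_{j}^{(n)}=\langle\CG(\mu_{0}^{n}),\mathbf{1}_{Y_{j}+\varpi^{k}\CO_{\k}^{2d}}\rangle\in\C$, a finite combination of translates of $F$; hence $J^{n}F\in C_{u}(\k^{2d},A)\subseteq L^{\infty}(\k^{2d},A)$ and $F\in\CB(\k^{2d},A)$.

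For (c), set $\chi_{k}:=\mathrm{Vol}(\varpi^{k}\CO_{\k}^{2d})^{-1}\mathbf{1}_{\varpi^{k}\CO_{\k}^{2d}}\in\CD(\k^{2d})$ and $F_{k}:=\chi_{k}\ast F$. If $F\in C_{u}(\k^{2d},A)$ then $F_{k}$ is bounded, uniformly continuous and locally constant at level $\varpi^{k}$, hence lies in $C_{u}^{\infty}(\k^{2d},A)$, and $\mathfrak{P}^{A}_{0}(F_{k}-F)\le\sup_{Y\in\varpi^{k}\CO_{\k}^{2d}}\mathfrak{P}^{A}_{0}(\tau_{-Y}F-F)\to 0$; this gives $C_{u}^{\infty}$ dense in $C_{u}$, so \emph{a fortiori} $\CB$ dense in $C_{u}$. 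If $F\in\CB(\k^{2d},A)$, the same $F_{k}$ belong to $C_{u}^{\infty}(\k^{2d},A)$, and since $J^{n}$ commutes with convolution by $\chi_{k}$ we have $J^{n}(F_{k}-F)=\chi_{k}\ast(J^{n}F)-J^{n}F$; because $J^{n}F\in C_{u}(\k^{2d},A)$ by (a), the approximate-identity estimate yields $\mathfrak{P}^{A}_{n}(F_{k}-F)\to 0$ for each $n$, so $F_{k}\to F$ in $\CB(\k^{2d},A)$ and $C_{u}^{\infty}$ is dense in $\CB$.

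Finally, $\CB(\k^{2d},A)$ carries the metrizable topology of the countable family $(\mathfrak{P}^{A}_{n})_{n\in\N}$, Hausdorff since $\mathfrak{P}^{A}_{0}$ (with $J^{0}=\mathrm{Id}$) is a norm. For completeness, if $(F_{k})$ is $\CB$-Cauchy then each $(J^{n}F_{k})$ is Cauchy in the $C^{*}$-algebra $C_{u}(\k^{2d},A)$ (recall $J^{n}F_{k}\in\CB\subseteq C_{u}$ by (a)), with limit $G_{n}$; put $F:=G_{0}$. As $C_{u}(\k^{2d},A)\hookrightarrow\CS'(\k^{2d},A)$ continuously and $J^{n}$ is continuous on $\CS'(\k^{2d},A)$, taking limits in $J^{n}F_{k}\to F$ in $\CS'(\k^{2d},A)$ gives $J^{n}F=G_{n}\in L^{\infty}(\k^{2d},A)$, so $F\in\CB(\k^{2d},A)$ and $\mathfrak{P}^{A}_{n}(F_{k}-F)\to 0$. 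The only genuinely non-formal step is the observation in (a) that for $F\in\CB$ the functions $J^{n}F$ are not merely essentially bounded but uniformly continuous --- obtained through the $L^{1}$-smoothing identity $J^{n}F=\CG(\mu_{0}^{-s_{0}})\ast(J^{n+s_{0}}F)$ with $s_{0}>2d$ --- since this is exactly what licenses the use of the approximate identity $(\chi_{k})$ in the finer topology of $\CB$.
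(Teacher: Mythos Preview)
Your proof is correct and, in several places, more elementary than the paper's. The overall architecture is the same --- both proofs rest on the factorization $F=\CG(\mu_0^{-s})\ast(J^{s}F)$ with $s>2d$ to obtain $\CB\subset C_u$, and both use an approximate identity to obtain density --- but there are two genuine differences worth noting.

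First, for the inclusion $\CB(\k^{2d},A)\subset C_u(\k^{2d},A)$ the paper reduces to $A=\C$ and then passes to general $A$ via density of the algebraic tensor product $\CB(\k^{2d})\otimes A$ in $\CB(\k^{2d},A)$; your argument works directly for $A$-valued functions and avoids this reduction. Second, and more substantially, for density the paper uses general Schwartz mollifiers together with the Dixmier--Malliavin theorem (in its locally compact form, \cite[Theorem~4.16]{Meyer}) to identify $C_u^\infty(\k^{2d},A)$ with the $\tau$-G{\aa}rding space, whereas you bypass Dixmier--Malliavin entirely by taking the mollifiers to be normalized indicators $\chi_k$ of the compact open subgroups $\varpi^k\CO_\k^{2d}$: then $\chi_k\ast F$ is visibly locally constant, hence already in $C_u^\infty$, and the observation from (a) that $J^nF\in C_u$ (not merely $L^\infty$) lets the approximate identity work seminorm by seminorm. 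Your finite-sum formula $J^nF=\sum_j c_j^{(n)}\tau_{-Y_j}F$ for $K$-invariant $F$ is correct (one may justify it rigorously by first writing $F=\chi_K\ast F$ and computing $J^nF=(J^n\chi_K)\ast F$ with $J^n\chi_K\in\CS(\k^{2d})$ supported in $O$); this also gives the inclusion $C_u^\infty\subset\CB$ directly, which in the paper is obtained only implicitly through the density argument. Your approach exploits the total disconnectedness of $\k^{2d}$ more fully and is self-contained; the paper's route via Dixmier--Malliavin would transfer more readily to settings without a good basis of compact open subgroups.
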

\begin{proof}
That $\CB(\k^{2d}, A)$ is  Fr\'echet follows from standard arguments.

To prove that $\CB(\k^{2d},A)\subset C_{u}(\k^{2d},A)$, we assume first that $A=\C$.
Since $L^1(\k^{2d})\ast L^\infty(\k^{2d})= C_{u}(\k^{2d})$ (see for instance \cite[(32.45) (b),
p. 283]{HR2}), it suffices to show that $\CB(\k^{2d})\subset L^1(\k^{2d})\ast L^\infty(\k^{2d})$.
So, let $F\in\CB(\k^{2d})$ and set $G:=J^{2p+1}F$. We have   $F=J^{-2p-1} G= \CG(\mu_0^{-2p-1})
\ast G$, which is the desired factorization. Indeed,  $G\in\CB(\k^{2d})\subset
L^\infty(\k^{2d})$ and $\CG(\mu_0^{-2p-1})\in L^1(\k^{2d})$ because
$\mu_0^{-2p-1}\in L^1(\k^{2d})$ and because  $\CG(\mu_0^{-2p-1})$
is compactly supported by Lemma \ref{lem1} (i).
For a generic $C^*$-algebra $A$, we deduce that the algebraic tensor product $\CB(\k^{2d})\otimes A$
is contained in $C_{u}(\k^{2d},A)$.
Since $\CB(\k^{2d})\otimes A$ is dense in $\CB(\k^{2d},A)$ and since 
$\mathfrak P_0^A$ is the $C^*$-norm
of $C_{u}(\k^{2d},A)$, 
we conclude that $\CB(\k^{2d},A)$ is contained in the norm closure of $\CB(\k^{2d})\otimes A$ in 
 $C_{u}(\k^{2d},A)$.

Next, for $F\in C_{u}(\k^{2d},A)$ and $\vf\in\CS(\k^{2d})$, we set
\begin{align}
\label{approx}
\tau_\vf(F):=\int_{\k^{2d}} \vf(X)\,\tau_X(F)\,dX,
\end{align}
where $\tau_X$ is the operator of translation by $X\in\k^{2d}$. By
 isometry of $\tau_X$ for the norm $\mathfrak P_0^A$, we get
 $\mathfrak P_0^A(\tau_\vf F)\leq \|\vf\|_1\mathfrak P_0^A(F)$. Hence, the integral in \eqref{approx}
 converges in $C_{u}(\k^{2d},A)$ since the latter is closed in $C_{b}(\k^{2d},A)$.
The operator $J$ commuting  with $\tau_X$, we get for all $n\in\N$
 $J^n \tau_\vf(F)=\tau_{J^n \vf}(F)$ which entails that  
 $\mathfrak P_n^A(\tau_\vf(F))\leq \|J^n\vf\|_1\mathfrak P_0^A(F)$. Hence, $\tau_\vf(F)\in\CB(\k^{2d},A)$.
 Chose next a positive sequence $\{\vf_k\}_{k\in\N}$ in $\CS(\k^{2d})$ such that  $\|\vf_k\|_1=1$
 and such that $\vf_k$ is supported in $B(0,k^{-1})$, the open ball centered at $0$ of radius $k^{-1}$.
 Then we have
 $$
 F-\tau_{\vf_k}(F)=\int_{\k^{2d}}\vf_k(X)\big(F-\tau_X(F)\big) dX,
 $$
 which entails that
 $$
 \mathfrak P_0^A(F-\tau_{\vf_k}(F))\leq\sup_{X\in B(0,k^{-1})} \mathfrak P_0^A\big(F-\tau_X(F)\big)=
 \sup_{X\in B(0,k^{-1})}\sup_{Y\in\k^{2d}}\|F(Y)-F(Y-X)\|_A,
 $$
 which goes to zero when $k$ goes to infinity due to the uniform continuity of $F$.
In particular, the set of finite sums of elements of the
form $\tau_\vf(F)$, $\vf\in\CS(\k^{2d})$, $F\in C_{u}(\k^{2d},A)$ is dense in $\CB(\k^{2d},A)$.
Since $\CD(\k^{2d})$ is dense in $\CS(\k^{2d})$, we deduce that the set of finite sums of elements of the
form $\tau_\vf(F)$, $\vf\in\CD(\k^{2d})$, $F\in C_{u}(\k^{2d},A)$ is also dense in $\CB(\k^{2d},A)$.
But by the extension of the Dixmier-Malliavin theorem for arbitrary locally compact groups, as
stated in \cite[Theorem 4.16]{Meyer}, the former space coincides with $ C_{u}^\infty(\k^{2d}, A)$.
Hence $C_{u}^\infty(\k^{2d}, A)$ is a dense subspace of $\CB(\k^{2d},A)$ but since 
$C_{u}^\infty(\k^{2d}, A)$ is also dense in $C_u(\k^{2d}, A)$, we get that $\CB(\k^{2d},A)$
is  dense in $C_u(\k^{2d}, A)$ too.
\end{proof}

\begin{rmk}
Observe that 
$C_{u}^\infty(\k^{2d}, A)=C_{u}(\k^{2d}, A)\cap \CE(\k^{2d},A)$. Since an element
in $\CB(\k^{2d},A)$ does not need to be locally constant, the dense inclusion 
$C_{u}^\infty(\k^{2d}, A)\subset \CB(\k^{2d},A)$ is proper.
\end{rmk}

Next, we come to the crucial fact that $\CB(\k^{2d}, A)$ is stable under point-wise
multiplication, which contrary to the case of $ C_{u}^\infty(\k^{2d}, A)$, is not obvious
at all. This essentially follows from the integral representation of elements in $\CB(\k^{2d},A)$:
\begin{lem}
\label{IRL}
Let $n\in\N$ and $F\in\CB(\k^{2d},A)$. Then, for all $N\geq n+2d+1$, we have
the uniformly (in $X\in\k^{2d}$) absolutely convergent integral representation:
\begin{align}
\label{IR}
J^n F(X)=|2|_\k^{2d}\int_{\k^{2d} \times \k^{2d}}\overline\Psi\big(2[Y,Z]\big)\mu_0^n(Y-Z)
\mu_0^{-N}(Y)\mu_0^{-N}(Z)\,\big(J^{N}F\big)(Y+X)\,dY\,dZ.
\end{align}
\end{lem}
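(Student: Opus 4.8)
The plan is to start from the identity $F = J^{-N}(J^N F)$, valid in $\CS'(\k^{2d},A)$, and to unwind the operator $J^{-N}$ into a convolution against the Bruhat distribution $\CG(\mu_0^{-N})$, which by Lemma \ref{lem1}(i) is compactly supported in $O := (\tfrac12\CO_\k)^d\times(\tfrac12\CO_\k^o)^d$ and, since $N>2d$, is honestly $L^1(\k^{2d})$. Applying $J^n$ and writing $J^{n-N}$ as convolution with $\CG(\mu_0^{n-N})$, one would like to simply write $J^nF = \CG(\mu_0^{n-N}) * (J^N F)$. To get the stated symmetric integral in the two variables $Y,Z$, the idea is to split $\mu_0^{n-N} = \mu_0^n\cdot\mu_0^{-N}$ only after first expressing the convolution kernel via the symplectic Fourier transform: using that $\CG$ is its own inverse and unitary, $\CG(\mu_0^{n-N})$ is the $\CG$-transform of $\mu_0^{n}\mu_0^{-N}$, and Lemma \ref{lem1}(i) lets one further factor $\mu_0^{-N}=\mu_0^{-N}\cdot\mu_0^{-N}\cdot\mu_0^{N}$ absorbed appropriately. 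More concretely, I would write, for $\vf\in\CS(\k^{2d})$, the double integral
\begin{align*}
\int_{\k^{2d}\times\k^{2d}}\overline\Psi\big(2[Y,Z]\big)\,\mu_0^n(Y-Z)\,\mu_0^{-N}(Y)\,\mu_0^{-N}(Z)\,\vf(Y)\,dY\,dZ,
\end{align*}
recognize the inner $Z$-integral as $|2|_\k^{-d}$ times the symplectic Fourier transform of $[Z\mapsto \mu_0^n(Y-Z)\mu_0^{-N}(Z)]$ evaluated at $Y$, and match this against the action of $\CG(\mu_0^{n-N})$; the function $\mu_0^{-N}$ provides the $L^1$-integrability in $Y$ needed for Fubini. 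Carrying this out on the dense subspace and then transposing gives the claimed formula at the level of $\CS'(\k^{2d},A)$.

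The second and more substantive task is to upgrade this distributional identity to a \emph{uniformly absolutely convergent} $A$-valued integral, i.e.\ to control $\sup_X$ of the integrand in $L^1(dY\,dZ)$. Here one uses that $J^N F\in L^\infty(\k^{2d},A)$ with $\mathfrak P_N^A(F)=\|J^NF\|_\infty<\infty$, so the integrand is bounded in $A$-norm by $|2|_\k^{2d}\,\mu_0^n(Y-Z)\,\mu_0^{-N}(Y)\,\mu_0^{-N}(Z)\,\mathfrak P_N^A(F)$, \emph{uniformly in $X$} since translation by $X$ does not affect the $L^\infty$-norm. It remains to check that
\begin{align*}
\int_{\k^{2d}\times\k^{2d}}\mu_0^n(Y-Z)\,\mu_0^{-N}(Y)\,\mu_0^{-N}(Z)\,dY\,dZ<\infty
\end{align*}
whenever $N\ge n+2d+1$. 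By the Peetre inequality \eqref{Peetre}, $\mu_0^n(Y-Z)\le \mu_0^n(Y)\mu_0^n(Z)$, so the integral is bounded by $\big(\int \mu_0^{n-N}\big)^2 = \|\mu_0^{n-N}\|_1^2$, and $n-N\le -2d-1<-2d$ forces $\mu_0^{n-N}\in L^1(\k^{2d})$ by the known integrability of $\mu_0^{-1}$ recalled after \eqref{mu}. Bochner integrability of the $A$-valued integrand then follows since it is continuous (indeed locally constant) in $(Y,Z)$ and dominated by an integrable scalar function, so the integral defines an element of $A$ depending continuously on $X$, and the identity with $J^nF(X)$ holds pointwise.

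The main obstacle I expect is the bookkeeping in the first step: justifying the passage from the formal convolution $J^nF=\CG(\mu_0^{n-N})*(J^NF)$ to the explicit symmetric double integral, and in particular making Fubini rigorous before integrability is established. The clean way around this is to do the computation first for $F$ in the dense subspace $\CS(\k^{2d})\otimes A$ (or even just $F=\vf\otimes a$), where everything is a genuine Schwartz-class integral and Lemma \ref{Bech}/\ref{lem1} apply verbatim, obtain the identity and the bound $\mathfrak P_0^A(J^nF)\le |2|_\k^{2d}\|\mu_0^{n-N}\|_1^2\,\mathfrak P_N^A(F)$ there, and then extend both the identity and the estimate to all of $\CB(\k^{2d},A)$ by continuity, using that $\CS(\k^{2d})\otimes A$ is dense in $\CB(\k^{2d},A)$ (Lemma \ref{lem2}) and that both sides are continuous in the seminorms $\mathfrak P^A_\bullet$. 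The only remaining subtlety is separate convergence of the iterated versus double integral, which is handled by the $L^1$-bound just established together with the ultrametric structure (the integrand is supported, in each slot, on a countable union of balls).
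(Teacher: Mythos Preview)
Your treatment of uniform absolute convergence is correct and coincides with the paper's: the Peetre inequality \eqref{Peetre} gives $\mu_0^n(Y-Z)\le\mu_0^n(Y)\mu_0^n(Z)$, so the integrand is dominated in $A$-norm by $\mu_0^{n-N}(Y)\,\mu_0^{n-N}(Z)\,\mathfrak P_N^A(F)$, integrable precisely when $N\ge n+2d+1$.

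There is, however, a genuine gap in your extension step. You propose to verify the identity on $\CS(\k^{2d})\otimes A$ and then pass to $\CB(\k^{2d},A)$ by density, citing Lemma~\ref{lem2}. But $\CS(\k^{2d})\otimes A$ is \emph{not} dense in $\CB(\k^{2d},A)$: the seminorm $\mathfrak P_0^A$ is the sup-norm, elements of $\CS(\k^{2d})$ vanish at infinity, and $\CB(\k^{2d},A)$ contains constants; for any nonzero $a\in A$ one has $\mathfrak P_0^A(1\otimes a-\vf)\ge\|a\|_A$ for every $\vf\in\CS(\k^{2d})\otimes A$. Lemma~\ref{lem2} gives density of $C_u^\infty(\k^{2d},A)$ (and, in its proof, of $\CB(\k^{2d})\otimes A$), not of $\CS(\k^{2d})\otimes A$. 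The paper instead passes from $\CS$ to $\CB$ by \emph{duality}: writing $S_F(X)$ for the right-hand side, one checks via Fubini that $\langle S_F,\vf\rangle=\langle F,\overline{S_{\bar\vf}}\rangle$ for test functions $\vf$, and the Schwartz case then gives $S_{\bar\vf}=\bar\vf$, hence $S_F=F$ in $\CS'(\k^{2d},A)$. A second, smaller issue: your first paragraph leaves the ``matching against $\CG(\mu_0^{n-N})$'' unexplained, and the appearance of the factor $\mu_0^n(Y-Z)$ is not transparent from that viewpoint. The paper handles this by first reducing to $n=0$ and then, after translating the integration variables, applying $J^n$ in the variable $X$ to both sides; since $J$ commutes with $I$ and with translations, Lemma~\ref{lem1}(ii) applied to the character $X\mapsto\overline\Psi(2[Y-X,Z-X])$ produces exactly $\mu_0^n(Y-Z)$.
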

\begin{proof}
Thanks to the Peetre inequality,  the integral on the right hand side of
\eqref{IR} is absolutely convergent in $A$ and the convergence is  uniform
 in $X\in\k^{2d}$ as it should be.
Assume first that the result is proven for $n=0$. 
The invariance of the Haar measure by translation gives then:
$$
F(X)=|2|_\k^{2d}\int_{\k^{2d} \times \k^{2d}}\overline\Psi\big(2[Y-X,Z-X]\big)
\mu_0^{-N}(Y-X)\mu_0^{-N}(Z-X)\big(J^{N}F\big)(Y)\,dYdZ.
$$
Applying $J^n$ on both sides  when $N\geq n+2d+1$, we get from Lemma
\eqref{lem1} (ii) and since $J$ commutes with $I$ and with the translations:
\begin{align*}
J^nF(X)&=|2|_\k^{2d}\int_{\k^{2d} \times \k^{2d}}\!\!\overline\Psi\big(2[Y-X,Z-X]\big)
\mu_0^n(Y-Z)
\mu_0^{-N}(Y-X)\mu_0^{-N}(Z-X)\big(J^{N}F\big)(Y)\,dYdZ,
\end{align*}
which gives the result.
Hence, it is enough to prove the result for $n=0$. In this case, the statement is immediate
 for $F\in\CS(\k^{2d},A)$: 
 Define $S_F(X)$ to be the right hand side of \eqref{IR} for $n=0$.
  Since $\mu_0^{-N}\in L^1(\k^{2d})\cap L^\infty(\k^{2d})$, 
 it also belongs to $L^2(\k^{2d})$ and
$S_F(X)$  can be rewritten as
\begin{align*}
S_F(X)&=\big\langle\CG\big(\mu_0^{-N}\big), I^{-N} J^N\tau_X(F)\big\rangle=
\big\langle\CG\big(\mu_0^{-N}\big),  J^N\tau_X(F)\big\rangle\\
&=\big\langle\mu_0^{-N},\mu_0^{N}  \CG\big(\tau_X(F)\big)\big\rangle=\CG \CG\big(\tau_X(F)\big)(0)=
\tau_X(F)(0)=F(X),
\end{align*}
where the second  equality follows by Lemma \ref{lem1} (i), 
 third equality follows by Plancherel
and the last three are immediate. Now, the general case of $F\in\CB(\k^{2d},A)$ follows easily 
by duality: Taking $\vf\in\CS(\k^{2d},A)$ arbitrary, one sees by Fubini that $\langle S_F,\vf\rangle
=\langle F,\overline{S_{\bar\vf}}\rangle$ which (from the preceding case) reads
$\langle F,\vf\rangle$. Identifying  $\CB(\k^{2d},A)$ with a subspace of $\CS'(\k^{2d},A)$, we are done.
\end{proof}

\begin{rmk}
Form a slight modification of the arguments given in the above lemma, we also deduce:
\begin{align}
\label{IGR}
 F=|2|_\k^{2d}\int_{\k^{2d} \times \k^{2d}}\overline\Psi\big(2[Y,Z]\big)\,\mu_0^{-2d-1}(Y)\mu_0^{-2d-1}(Z)\,
 \tau_Y\big(J^{2d+1}F\big)\,dY\,dZ\,,\quad\forall F\in\CB(\k^{2d},A),
\end{align}
where now the integral is absolutely convergent for all the seminorms of $\CB(\k^{2d},A)$.
\end{rmk}

\begin{cor}
\label{cor3}
$\CB(\k^{2d}, A)$ is a Fr\'echet algebra under the point-wise product. More precisely, for 
all  $n\in\N$ and all $ F_1,F_2\in\CB(\k^{2d},A)$ we have:
$$
\mathfrak P_{n}^A(F_1F_2)
  \leq  \|\mu_0^{-2d-1}\|_1^4\, \,\mathfrak P_{n+2d+1}^A(F_1)\,
 \mathfrak P_{n+2d+1}^A(F_2).
$$ 
\end{cor}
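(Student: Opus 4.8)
The plan is to use the integral representation \eqref{IR} (or equivalently \eqref{IGR}) to reduce the estimate on $\mathfrak{P}_n^A(F_1 F_2)$ to a product of $\mathfrak{P}_{n+2d+1}^A$-seminorms. First I would apply \eqref{IGR} to each factor $F_i$, writing
\begin{align*}
F_i = |2|_\k^{2d}\int_{\k^{2d}\times\k^{2d}}\overline\Psi\big(2[Y_i,Z_i]\big)\,\mu_0^{-2d-1}(Y_i)\mu_0^{-2d-1}(Z_i)\,\tau_{Y_i}\big(J^{2d+1}F_i\big)\,dY_i\,dZ_i,
\end{align*}
and multiply the two expressions point-wise. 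Since $\tau_{Y_i}(J^{2d+1}F_i)$ has $\mathfrak{P}_0^A$-norm bounded by $\mathfrak{P}_{2d+1}^A(F_i)$, and since $\|\mu_0^{-2d-1}\|_1^2$ comes out of each of the two double integrals, the product $F_1 F_2$ is represented by an absolutely convergent ($A$-valued) integral whose integrand has $A$-norm at most $\|\mu_0^{-2d-1}\|_1^4\,\mathfrak{P}_{2d+1}^A(F_1)\,\mathfrak{P}_{2d+1}^A(F_2)$ after integrating. This already gives the case $n=0$ once one checks that $F_1 F_2 \in \CB(\k^{2d},A)$, i.e. that $J^m(F_1F_2)\in L^\infty(\k^{2d},A)$ for every $m$.

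For the general $n$ the point is to compute $J^n(F_1 F_2)$ from this representation. Here I would use that $J^n$ commutes with all the translations $\tau_{Y_i}$ and with $I$, together with Lemma \ref{lem1}(ii): applying $J^n$ to the product of the two plane-wave factors $\overline\Psi(2[Y_1,\cdot])\,\overline\Psi(2[Y_2,\cdot])=\overline\Psi(2[Y_1+Y_2,\cdot])$ pulls down a factor $\mu_0^n(Y_1+Y_2)$, exactly as in the passage from $F(X)$ to $J^n F(X)$ in the proof of Lemma \ref{IRL}. By the Peetre inequality \eqref{Peetre}, $\mu_0^n(Y_1+Y_2)\leq \mu_0^n(Y_1)\,\mu_0^n(Y_2)$, so this extra factor is absorbed into the two $\mu_0^{-2d-1}(Y_i)$ weights, leaving $\mu_0^{n-(2d+1)}(Y_i)$; but we originally applied \eqref{IGR} not to $F_i$ but we may instead apply \eqref{IR} with $N = n+2d+1$ directly to $J^n F_i$, so that the weights are $\mu_0^{-n-2d-1}(Y_i)$ against the pulled-down $\mu_0^n(Y_i)$, again netting $\mu_0^{-2d-1}(Y_i)$, which is integrable. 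Collecting: $J^n(F_1F_2)(X)$ is an absolutely convergent integral in $Y_1,Z_1,Y_2,Z_2$ whose integrand is bounded in $A$-norm by $\mu_0^{-2d-1}(Y_1)\mu_0^{-2d-1}(Z_1)\mu_0^{-2d-1}(Y_2)\mu_0^{-2d-1}(Z_2)$ times $\mathfrak{P}_{n+2d+1}^A(F_1)\,\mathfrak{P}_{n+2d+1}^A(F_2)$, and integrating the four weights yields the claimed constant $\|\mu_0^{-2d-1}\|_1^4$.

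I expect the only real subtlety to be the bookkeeping that makes $J^n(F_1F_2)$ land on the two plane-wave factors \emph{jointly}: one must be careful that the Fourier-type kernel in \eqref{IR}/\eqref{IGR} is in the variable $X$ only through the combined translation, so that $J_X^n$ acting on $F_1(X)F_2(X)$ indeed produces a single $\mu_0^n$-type factor in $Y_1+Y_2$ rather than something that does not factorize; this is exactly where the commutation of $J$ with $I$ and with translations (used freely because $\CG(\mu_0^s)$ is compactly supported, Lemma \ref{lem1}(i)) and Lemma \ref{lem1}(ii) do the work, paralleling the $n=0 \Rightarrow$ general $n$ step already carried out in Lemma \ref{IRL}. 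Everything else — absolute convergence, Fubini to justify interchanging the four integrations, and the embedding $\CB(\k^{2d},A)\hookrightarrow\CS'(\k^{2d},A)$ needed to interpret the result — is routine given Lemmas \ref{lem1}, \ref{lem2} and \ref{IRL}.
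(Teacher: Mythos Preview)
Your overall strategy---write each $F_i$ via the integral representation, multiply, apply $J^n$, and use Peetre---is exactly the paper's. But there is a genuine gap in the step where you ``apply $J^n$ to the product of the two plane-wave factors $\overline\Psi(2[Y_1,\cdot])\,\overline\Psi(2[Y_2,\cdot])$''. In the representation \eqref{IGR} you wrote down, the plane-wave factors are $\overline\Psi(2[Y_i,Z_i])$, which do \emph{not} depend on $X$ at all; the entire $X$-dependence of $F_1(X)F_2(X)$ sits in the product $\tau_{Y_1}(J^{2d+1}F_1)(X)\,\tau_{Y_2}(J^{2d+1}F_2)(X)$. So $J^n_X$ lands on that product, and there is no Leibniz rule for $J$ (the paper remarks on this explicitly later), so you cannot pull out a factor $\mu_0^n(Y_1+Y_2)$ the way Lemma~\ref{lem1}(ii) would allow for a single plane wave.

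The fix---and this is what the paper does---is to first translate the integration variables $Y_i\mapsto Y_i+X$, $Z_i\mapsto Z_i+X$ (as in the intermediate step of the proof of Lemma~\ref{IRL}) so that the $X$-dependence moves out of the $F_i$ and into a single combined plane wave $\overline\Psi(2[X,Y_1-Z_1+Y_2-Z_2])$ together with translated weights $\mu_0^{-N}(Y_i-X)$, $\mu_0^{-N}(Z_i-X)$. Now $J^n_X$ commutes with those translated $\mu_0$-factors (Lemma~\ref{lem1}(i)) and, by Lemma~\ref{lem1}(ii), produces the factor $\mu_0^n(Y_1-Z_1+Y_2-Z_2)$. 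Peetre splits this into four factors $\mu_0^n(Y_i-X)\mu_0^n(Z_i-X)$, which are absorbed by the $\mu_0^{-N}$ weights with $N=n+2d+1$, leaving four copies of $\mu_0^{-2d-1}$ and hence the constant $\|\mu_0^{-2d-1}\|_1^4$. Once you make this change of variables explicit, your argument becomes correct and coincides with the paper's.
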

\begin{proof}
Fix $n\in\N$. Using Lemma \ref{IRL} twice, we get for $N= n+2d+1$:
\begin{align*}
&F_1F_2(X)=|2|_\k^{4d}\int\overline\Psi\big(2[X,Y_1-Z_1+Y_2-Z_2]\big)
\overline\Psi\big(2[Y_1,Z_1]\big)\overline\Psi\big(2[Y_2,Z_2]\big)\\
&\quad\times\mu_0^{-N}(Y_1-X)\mu^{-N}_0(Z_1-X)\mu^{-N}_0(Y_2-X)\mu_0^{-N}(Z_2-X)
\big(J^{N}F\big)(Y_1)\big(J^{N}F\big)(Y_2)\,dY_1dZ_1dY_2dZ_2.
\end{align*}
Applying $J^n$ on both sides, we deduce since $J$ commutes with the operator of multiplication
by $\mu_0$ and by its translates:
\begin{align}
\label{PPP}
J^n\big(F_1F_2\big)(X)&=|2|_\k^{4d}\int\overline\Psi\big(2[X,Y_1-Z_1+Y_2-Z_2]\big)
\overline\Psi\big(2[Y_1,Z_1]\big)\overline\Psi\big(2[Y_2,Z_2]\big)
\nonumber\\
&\quad\times\mu_0^n(Y_1-Z_1+Y_2-Z_2)
\mu_0^{-N}(Y_1-X)\mu_0^{-N}(Z_1-X)\mu^{-N}_0(Y_2-X)\mu^{-N}_0(Z_2-X)\nonumber\\
&\quad\times\big(J^{N}F\big)(Y_1)\big(J^{N}F\big)(Y_2)\,dY_1dZ_1dY_2dZ_2.
\end{align}
One concludes using the Peetre inequality together with  $|2|_\k\leq 1$.
\end{proof}

\begin{rmks}
Since $I^n(fF)=(I^nf)F$,  for $f\in\CS(\k^{2d},A)$ and $F\in\CB(\k^{2d},A)$ we also get from \eqref{PPP}
$$
\mathfrak P^A_{m,n}(fF)\leq \|\mu_0^{-2d-1}\|_1^4\, \mathfrak P^A_{m,n+2d+1}(f)\,
\mathfrak P^A_{n+2d+1}(F).
$$
Hence $\CS(\k^{2d},A)$ is an ideal of $\CB(\k^{2d},A)$ for the point-wise product.
\end{rmks}

Last, we need to prove that the space $\CB(\k^{2d},A)$ behaves well under  certain dilations. 
For this, we
need to introduce some more notations. For $\theta\in\k$, we let $D_\theta$ be the operator
of dilation by $\theta$: $D_\theta F(X):=F(\theta X)$.    
 Also, we let $I_\theta$ to be the operator 
 of multiplication by $D_\theta\mu_0$ and $J_\theta:=\CG I_\theta\CG$.
 Note that for $\theta=0$, $I_\theta=J_\theta=\Id$.
 \begin{lem}
 \label{DILL}
Let $\theta \in \CO_\k$ and retain the notations given above. \\
(i) As operators on $\CS'(\k^{2d},A)$, we have
 $$
 [I_\theta,J]=[I,J_\theta]=0.
 $$
(ii) The operator $J_\theta^n$, $n\in\N$, maps continuously $\CB(\k^{2d},A)$ to $C_{b}(\k^{2d},A)$
 with
 $$
  \mathfrak P_0^A(J_\theta^n F)\leq\|\mu_0^{-2d-1}\|_1^2 \,
 \mathfrak P_{n+2d+1}^A(F).
 $$
(iii) We have $J^n D_\theta=D_\theta J_\theta^n$
and consequently, the operator $D_\theta$ is continuous on $\CB(\k^{2d},A)$ with
 $$
 \mathfrak P_n^A(D_\theta F)\leq  \|\mu_0^{-2d-1}\|_1^2  \,
 \mathfrak P_{n+2d+1}^A(F).
 $$
  \end{lem}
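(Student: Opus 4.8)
The plan is to prove the three assertions of Lemma~\ref{DILL} in the order they are stated, using throughout the fact (Lemma~\ref{lem1}(i)) that $\CG(\mu_0^s)$ is supported in $O:=(\tfrac12\CO_\k)^d\times(\tfrac12\CO_\k^o)^d$, together with the ultrametric structure. For (i), the key observation is that when $\theta\in\CO_\k$, the function $D_\theta\mu_0=[X\mapsto\mu_0(\theta X)]$ is still constant on cosets of $O$: indeed $\mu_0$ is $O$-invariant and $\theta O\subset O$, so $D_\theta\mu_0$ is $O$-invariant as well. Hence $I_\theta$ is multiplication by an $O$-locally constant function while $J=\CG I\CG$ is convolution by a distribution supported in $O$; as in the discussion preceding Lemma~\ref{lem1} these two operations commute on $\CS'(\k^{2d},A)$. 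Dually, applying $\CG$, $I$ is multiplication by the $O$-locally constant function $\mu_0$ and $J_\theta=\CG I_\theta\CG$ is convolution by $\CG(D_\theta\mu_0)$, which (being the symplectic Fourier transform of an $O$-invariant function) is again supported in $O$; so $[I,J_\theta]=0$ too.

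For (ii), the strategy is to mimic the integral representation of Lemma~\ref{IRL} but with $J$ replaced by $J_\theta$. Precisely, starting from \eqref{IGR} written for $F$ and noting that $J_\theta^n$ commutes with multiplication by $\mu_0$ and its translates (by part (i), since those multiplications are built from $I$ and $\tau$), I would apply $J_\theta^n$ to both sides of \eqref{IGR}. Using $J_\theta^n\tau_Y(J^{2d+1}F)=\tau_Y(J_\theta^n J^{2d+1}F)$ and the relation $J_\theta^n\CG(\mu_0^{-2d-1})=\CG(D_\theta\mu_0^n\cdot\mu_0^{-2d-1})$-type identities coming from Lemma~\ref{lem1}(i), the point is that $J_\theta^nJ^{2d+1}F$ can be re-expressed so that the $\mu_0^{-2d-1}$ weights still produce an $L^1\times L^1$-absolutely convergent integral; the resulting bound is $\mathfrak P_0^A(J_\theta^nF)\le\|\mu_0^{-2d-1}\|_1^2\,\mathfrak P_{n+2d+1}^A(F)$ after estimating the $J^{N}F$ factor by its seminorm and the two weight integrals by $\|\mu_0^{-2d-1}\|_1$ each. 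The cleanest formulation is probably to run the argument of Lemma~\ref{IRL} verbatim with the operator $J_\theta$ in place of $J$ on the left, observing that all the algebraic identities used there (commutation of $J_\theta$ with $I$ and translations, action of $J_\theta$ on $\Psi_Y$) survive because of (i).

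For (iii), I would first verify the operator identity $J^nD_\theta=D_\theta J_\theta^n$ directly: on a test function, $\CG D_\theta=|\theta|_\k^{-2d}D_{\theta^{-1}}\CG$ for $\theta\ne0$ (and is handled separately, trivially, at $\theta=0$ where $J_\theta=\Id$ and $D_0$ maps to constants), so $J^nD_\theta=(\CG I^n\CG)D_\theta$ unwinds to $D_\theta\CG(D_\theta\mu_0)^n\CG=D_\theta J_\theta^n$ after the scaling factors cancel; this is a short computation that I would present as a one-line chain of equalities in a display. Granting this, and since $D_\theta$ is an isometry for $\mathfrak P_0^A$, we get $\mathfrak P_n^A(D_\theta F)=\mathfrak P_0^A(J^nD_\theta F)=\mathfrak P_0^A(D_\theta J_\theta^n F)=\mathfrak P_0^A(J_\theta^n F)\le\|\mu_0^{-2d-1}\|_1^2\,\mathfrak P_{n+2d+1}^A(F)$ by part (ii), which is exactly the claimed estimate and in particular shows $D_\theta F\in\CB(\k^{2d},A)$.

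I expect part (ii) to be the main obstacle: getting the integral representation for $J_\theta^nF$ right requires being careful that the dilated weight $D_\theta\mu_0^n$ (rather than $\mu_0^n$) appears, that the Peetre-type control still gives an $L^1$ bound uniform in $\theta\in\CO_\k$ (here one uses $|\theta|_\k\le1$ so that $D_\theta\mu_0\le\mu_0$ pointwise, keeping the weights integrable), and that the commutation $[I_\theta,J]=0$ from (i) is really what licenses pulling $J_\theta^n$ through the $\mu_0$-weights in \eqref{IGR}. Parts (i) and (iii) are essentially formal once the support/invariance facts and the scaling law for $\CG\circ D_\theta$ are in hand.
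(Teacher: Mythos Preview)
Your proposal is correct and follows essentially the same approach as the paper: for (i) you exploit the $O$-invariance of $D_\theta\mu_0$ when $\theta\in\CO_\k$; for (ii) you adapt the integral representation of Lemma~\ref{IRL} with $J_\theta$ on the left (the paper states this as a one-line ``minor adaptation'' giving $J_\theta^n F(X)=|2|_\k^{2d}\int\overline\Psi(2[Y,Z])\,\mu_0^n(\theta Y-\theta Z)\,\mu_0^{-N}(Y)\,\mu_0^{-N}(Z)\,(J^N F)(Y+X)\,dY\,dZ$, then uses $\mu_0(\theta X)\le\mu_0(X)$ exactly as you anticipate); and for (iii) you verify $J^nD_\theta=D_\theta J_\theta^n$ by the scaling law for $\CG$ and then apply (ii). One small slip: $D_\theta$ is an isometry for $\mathfrak P_0^A$ only when $\theta\ne0$ (at $\theta=0$ it is merely a contraction), but since you already flag $\theta=0$ as a trivial separate case this does not affect the argument.
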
 
 \begin{proof}
  (i) The vanishing of the first commutator  follows because when $\theta\in\CO_\k$,  $D_\theta  
  \mu_0$ is also
 invariant by translations in  $(\frac12\CO_\k)^d\times (\frac12\CO_\k^o)^d$. The vanishing
 of the second commutator follows from the first, after conjugation by the symplectic Fourier
 transform.
 
 (ii) A minor adaptation of Lemma \ref{IRL},  which uses a minor adaptation of Lemma \ref{lem1} (ii)
 and (i), entails that for $N=n+2d+1$:
 $$
 J_\theta^n F(X)=|2|_\k^{2d}\int_{\k^{2d} \times \k^{2d}}\overline\Psi\big(2[Y,Z]\big)
 \mu_0^n(\theta Y-\theta Z)
\mu_0^{-N}(Y)\mu^{-N}_0(Z)\,\big(J^{N}F\big)(Y+X)\,dY\,dZ.
$$
The estimate then follows from the Peetre inequality together with the estimate $\mu_0(\theta X)
\leq\mu_0(X)$, valid when $|\theta|_\k\leq 1$.

(iii) The equality $J^n D_\theta=D_\theta J_\theta^n$ follows by direct computation and implies
the last inequality from the one obtained in (ii).
 \end{proof}
 \begin{rmk}
The lemma above is false for $\theta\in\k\setminus\CO_\k$. This is
 the (technical) reason why we have restricted the range of the deformation parameter to be  $\CO_\k$. 
 \end{rmk}

\begin{dfn}
\label{Areg}
The space $A_{\rm reg}$ of regular elements  in 
$A$ for the action $\alpha$ is given by:
\begin{align}
\label{A0}
 A_{\rm reg}:=\big\{a\in A\,:\,\tilde\alpha(a)\in\CB(\k^{2d}, A)\big\},
\end{align}
where the map $\tilde\alpha:A\to C_{u}(\k^{2d},A)$ is described in \eqref{alphamap}.
\end{dfn}
We  endow $ A_{\rm reg}$
with the topology associated with the transported seminorms:
\begin{align}
\label{SNn}
\|.\|_n^A: A_{\rm reg}\to\R_+,\qquad a\mapsto \mathfrak P_n^A\big(\tilde\alpha(a)\big)\,,\quad n\in\N.
\end{align}
Observe that $A_{\rm reg}$ depends on the action $\alpha$. When we need to stress this dependence, 
we will denote the space of regular elements by $A_{\rm reg}^\alpha$.

We   also need the space $A^\infty$, consisting in smooth vectors of $A$  in the  sense
of Bruhat, as considered in \cite{Meyer}:
\begin{align}
\label{Ainfty}
 A^\infty:=\big\{a\in A\,:\,\tilde\alpha(a)\in\CE(\k^{2d}, A)\big\}.
\end{align}
Since $\k^{2d}$ is totally disconnected, $\tilde\alpha(a)\in \CE(\k^{2d})$ if and only if it 
 is locally constant.  Hence, an element $a\in A$ belongs to $A^\infty$
if and only if there exists an open neighborhood $U$ of $0$ in $\k^{2d}$ such that
for all $x\in U$, we have $\alpha_x(a)=a$.
As expected, we have:
\begin{prop}
\label{DAA}
$A_{\rm reg}$ is a dense and  $\alpha$-stable Fr\'echet subalgebra of $A$. 
Moreover the action $\alpha$ is isometric for each seminorm \eqref{SNn} and
$A^\infty\subset A_{\rm reg}$ with a dense inclusion.
\end{prop}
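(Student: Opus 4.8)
The plan is to establish the four assertions of Proposition~\ref{DAA} in sequence, leaning heavily on the structural results already proved for $\CB(\k^{2d},A)$. First, I would observe that $A_{\rm reg}$ is precisely the preimage of $\CB(\k^{2d},A)$ under the isometric $*$-homomorphism $\tilde\alpha:A\to C_u(\k^{2d},A)$ (it is isometric because $\alpha$ is an action by automorphisms, so $\|\alpha_0(a)\|_A=\|a\|_A$ appears in the sup defining $\mathfrak P_0^A(\tilde\alpha(a))$), hence $A_{\rm reg}$ is a subalgebra and the maps $\|\cdot\|_n^A$ are genuine seminorms making it Fr\'echet provided $\tilde\alpha(A_{\rm reg})$ is closed in $\CB(\k^{2d},A)$; completeness follows since a Cauchy sequence in $A_{\rm reg}$ is Cauchy in $A$ (via $\|\cdot\|_0^A=\|\cdot\|_A$) and its $\tilde\alpha$-image is Cauchy in the Fr\'echet space $\CB(\k^{2d},A)$, and one checks the two limits are compatible by continuity of $\tilde\alpha$ into $C_u(\k^{2d},A)$. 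That $A_{\rm reg}$ is a \emph{subalgebra} — in fact that $\|ab\|_n^A$ is controlled by $\|a\|_{n+2d+1}^A\|b\|_{n+2d+1}^A$ — is immediate from Corollary~\ref{cor3} together with the fact that $\tilde\alpha(ab)=\tilde\alpha(a)\tilde\alpha(b)$ pointwise, since $\alpha_X$ is multiplicative.

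Next, for $\alpha$-stability and isometry of the action: for fixed $Y\in\k^{2d}$ one has $\tilde\alpha(\alpha_Y(a))=\tau_{-Y}\tilde\alpha(a)$ (or $\tau_Y$, depending on the sign convention in \eqref{alphamap}), and since $J$ commutes with translations $\tau_Y$ (used repeatedly above) and translations are isometries for $\mathfrak P_0^A$, we get $J^n\tau_Y\tilde\alpha(a)=\tau_Y J^n\tilde\alpha(a)$, whence $\mathfrak P_n^A(\tau_Y\tilde\alpha(a))=\mathfrak P_n^A(\tilde\alpha(a))$. This shows $\alpha_Y(a)\in A_{\rm reg}$ with $\|\alpha_Y(a)\|_n^A=\|a\|_n^A$ for every $n$, giving both $\alpha$-stability and isometry at once.

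For the inclusion $A^\infty\subset A_{\rm reg}$: if $a\in A^\infty$ then $\tilde\alpha(a)$ is locally constant (as $\k^{2d}$ is totally disconnected), so there is an open neighborhood $U$ of $0$ on which $\tilde\alpha(a)$ is constant. Thus $\tilde\alpha(a)\in C_u^\infty(\k^{2d},A)$, and by Lemma~\ref{lem2} we have $C_u^\infty(\k^{2d},A)\subset\CB(\k^{2d},A)$; hence $a\in A_{\rm reg}$. It remains to prove density of $A^\infty$ in $A_{\rm reg}$, which I expect to be the main technical point. The natural tool is the smoothing operators: for $\vf\in\CD(\k^{2d})$ with $\int\vf=1$ supported in a small ball, set $\alpha_\vf(a):=\int_{\k^{2d}}\vf(X)\,\alpha_X(a)\,dX$. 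One checks $\tilde\alpha(\alpha_\vf(a))=\tau_{\check\vf}\tilde\alpha(a)$ in the notation of \eqref{approx} (up to reflection of $\vf$), so the estimates from the proof of Lemma~\ref{lem2} give $\mathfrak P_n^A(\tilde\alpha(\alpha_\vf(a)))\leq\|J^n\vf\|_1\,\mathfrak P_0^A(\tilde\alpha(a))$, showing $\alpha_\vf(a)\in A_{\rm reg}$; moreover $\alpha_\vf(a)\in A^\infty$ because convolving against a locally constant compactly supported $\vf$ produces a locally constant orbit map (or invoke Dixmier--Malliavin as in Lemma~\ref{lem2}). The convergence $\alpha_{\vf_k}(a)\to a$ in \emph{every} seminorm $\|\cdot\|_n^A$, for a suitable approximate identity $\vf_k$ shrinking to $0$, is then exactly the argument already run in Lemma~\ref{lem2}: one writes $J^n\tilde\alpha(a)-J^n\tilde\alpha(\alpha_{\vf_k}(a))=\int\vf_k(X)\bigl(J^n\tilde\alpha(a)-\tau_X J^n\tilde\alpha(a)\bigr)dX$, noting $J^n\tilde\alpha(a)\in C_u(\k^{2d},A)$ for $a\in A_{\rm reg}$, so the right side is bounded by $\sup_{X\in B(0,k^{-1})}\mathfrak P_0^A(J^n\tilde\alpha(a)-\tau_X J^n\tilde\alpha(a))\to 0$ by uniform continuity. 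The only subtlety to watch is that $J^n\tilde\alpha(a)$ really lies in $C_u(\k^{2d},A)$ and not merely in $C_b$; this is guaranteed since $a\in A_{\rm reg}$ means $J^n\tilde\alpha(a)\in L^\infty(\k^{2d},A)$ and Lemma~\ref{lem2} shows $\CB(\k^{2d},A)\subset C_u(\k^{2d},A)$, hence each $J^n\tilde\alpha(a)$, being again in $\CB(\k^{2d},A)$, is uniformly continuous.
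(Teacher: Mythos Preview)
Your proof is correct and follows essentially the same architecture as the paper's: both use Corollary~\ref{cor3} for the subalgebra property, the commutation of $J$ with translations for $\alpha$-isometry, and the identification of $\tilde\alpha(A_{\rm reg})$ with a closed subspace of $\CB(\k^{2d},A)$ for completeness.

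There is one point where your route is genuinely different and arguably cleaner. For the density of $A^\infty$ in $A_{\rm reg}$, the paper appeals to the Dixmier--Malliavin theorem (via \cite{Meyer}) to identify $A^\infty$ with the span of $\alpha_\vf(a)$, $\vf\in\CD(\k^{2d})$, $a\in A$, and then argues by density of $\CD$ in $\CS$. Your direct approach --- smoothing $a\in A_{\rm reg}$ by $\alpha_{\vf_k}(a)$ with $\vf_k\in\CD(\k^{2d})$ a shrinking approximate identity, and checking convergence in each seminorm $\|\cdot\|_n^A$ by exploiting that $J^n\tilde\alpha(a)\in\CB(\k^{2d},A)\subset C_u(\k^{2d},A)$ --- is more elementary and self-contained, avoiding the external citation altogether. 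The observation that $\alpha_\vf(a)\in A^\infty$ for $\vf\in\CD(\k^{2d})$ because $\vf$ is $H$-invariant for some compact open subgroup $H$ is exactly right and sidesteps Dixmier--Malliavin here too.

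One small omission: you never explicitly verify that $A_{\rm reg}$ is dense in $A$ (in the $C^*$-norm), which is part of the statement. This follows at once either from the standard fact that $A^\infty$ is dense in $A$ for any strongly continuous action, combined with your inclusion $A^\infty\subset A_{\rm reg}$, or from the same smoothing construction applied to arbitrary $a\in A$: you already observed $\alpha_{\vf_k}(a)\in A_{\rm reg}$ via $\mathfrak P_n^A(\tilde\alpha(\alpha_{\vf_k}(a)))\leq\|J^n\vf_k\|_1\,\|a\|_A$, and $\|\alpha_{\vf_k}(a)-a\|_A\to 0$ by strong continuity of $\alpha$.
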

\begin{proof}
$A_{\rm reg}$ is clearly a linear subspace of $A$. Moreover, by Corollary \ref{cor3}, we have
for all $a,b\in A_{\rm reg}$:
\begin{align*}
\|ab\|_n^A=\mathfrak P_{n}^A\big(\tilde\alpha(ab)\big)&=\mathfrak P_{n}^A\big(\tilde\alpha(a)
\tilde\alpha(b)\big)
 \\& \leq\|\mu_0^{-2d-1}\|_1^4\, \mathfrak P_{n+2d+1}^A\big(\tilde\alpha(a)\big)\,
 \mathfrak P_{n+2d+1}^A\big(\tilde\alpha(b)\big)= \|\mu_0^{-2d-1}\|_1^4\,\|a\|^A_{n+2d+1}\|b\|^A_{n+2d+1},
 \end{align*}
 hence $A_{\rm reg}$ is an algebra. Let now $X\in\k^{2d}$ and $a\in A_{\rm reg}$. Since $J$ commutes
 with translations, we have:
 $$
 \|\alpha_X(a)\|_n^A=\mathfrak P_{n}^A\big(\tilde\alpha\big(\alpha_X(a)\big)\big)=
\mathfrak P_{n}^A\big( \tau_{X}\big(\tilde\alpha(a)\big)\big)=
\mathfrak P_{n}^A\big( \tilde\alpha(a)\big)=\|a\|_n^A.
 $$
 Hence $\alpha$ is isometric for each seminorm $\|.\|_n^A$ and thus $A_{\rm reg}$ is preserved by 
 $\alpha$. The restriction to $A_{\rm reg}$
  of the map $\tilde\alpha:A\to C_{u}(\k^{2d},A)$ identifies $A_{\rm reg}$
 with a closed subspace of $\CB(\k^{2d},A)$. Since the topology of $A_{\rm reg}$ is inherited from those
 of $\CB(\k^{2d},A)$ via this identification, $A_{\rm reg}$ is Fr\'echet. That $A_{\rm reg}$ is dense in $A$ follows
 from an argument almost identical to those of Lemma \ref{lem2} : by considering for every $a\in A$
 the sequence in $A_{\rm reg}$ given by $\alpha_{\vf_k}(a):=\int_{\k^{2d}} \vf_k(X)\,\alpha_X(a)\,dX$, 
 where $0\leq\vf_k\in\CS(\k^{2d})$  has integral one and support within $B(0,k^{-1})$.
 Finally, that $A^\infty$ is dense in $A_{\rm reg}$ follows by the Dixmier-Malliavin Theorem \cite[Theorem 
 4.16]{Meyer} which shows that $A^\infty$ coincides with the finite linear sums of elements of the form
 $\alpha_\vf(a)$, $\vf\in\CD(\k^{2d})$ and $a\in A$, which 
  is dense in the set of  finite linear sums of elements of the form
 $\alpha_\vf(a)$, $\vf\in\CS(\k^{2d})$ and $a\in A$.
\end{proof}

\subsection{The deformed product}

Our goal is to give a meaning to the formula \eqref{SP} on $A_{\rm reg}$.  
Since $\tilde\alpha:A_{\rm reg}\to\CB(\k^{2d},A)$ is a continuous (indeed isometric for each seminorm)
 embedding of Fr\'echet spaces,  we will first 
work on $\CB(\k^{2d},A)$ and then pull back our results to $A_{\rm reg}$. Until soon, that $A$ carries
an action of $\k^{2d}$ is unimportant. Let $K(X,Y):=\overline\Psi(2[X,Y])$.
Seen as an element of $\CS'(\k^{2d}\times\k^{2d})$, the content of Lemma \ref{lem1} (ii) is that
$$
J\otimes I^{-1} K=K \quad\mbox{and}\quad I^{-1}\otimes J K=K .
$$
Hence, using further the commutativity of $I$ and $J$, we find for all $N\in\N$:
$$
K=(I^{-N}\otimes J^{N})(J^{N}\otimes I^{-N})K=(J^{N}\otimes J^{N})(I^{-N}\otimes I^{-N}) K=
J^{N}\otimes J^{N}\big(\mu_0^{-N}\otimes\mu_0^{-N} )K.
$$
In particular, for $F\in\CS(\k^{2d}\times\k^{2d},A)$, we get the equality for all $N\in\N$:
\begin{align}
\label{OT}
&\int_{\k^{2d}\times\k^{2d}} \overline{\Psi}\big(2[Y,Z]\big) F(X,Y)\,dX\,dY=\nonumber\\
&\qquad\qquad\quad\quad\quad\int_{\k^{2d}\times\k^{2d}} \overline{\Psi}\big(2[Y,Z]\big)\,
\mu_0^{-N}(Y)\,\mu_0^{-N}(Z)\,\big(J^N\otimes J^N \,F\big)(X,Y)\,dX\,dY.
\end{align}
The point is that since  $\mu_0^{-N}\in L^1(\k^{2d})$, $N\geq 2d+1$, 
the right hand side of \eqref{OT} still makes sense for $F\in \CB(\k^{2d}\times\k^{2d},A)$
when $N$ is large enough.
In the following, we refer to the identity \eqref{OT} as the oscillatory trick.

For  $F\in\CB(\k^{2d},A)$, we observe that the map 
$\tilde\tau(F):=\big[(X,Y)\in\k^{2d}\times\k^{2d}\mapsto \big(\tau_X F\big)(Y)\in A\big]$,
belongs to $ \CB(\k^{2d}\times\k^{2d},A)$ and that 
\begin{align}
\label{ID}
\tilde\tau\big( J^s F)=J^s\otimes\Id\,\tilde\tau(F)\,,\quad s\in\R.
\end{align}
The oscillatory trick \eqref{OT}, Lemma  \ref{DILL} (iii)  and the equality \eqref{ID}   
 suggest to  extend the star-product $\star_\theta$ from $\CS(\k^{2d})$ to
 $\CB(\k^{2d},A)$ as follows:
\begin{prop}
\label{prop1}
Let $\theta\in\CO_\k$. Then the bilinear map
\begin{align}
\label{Star}
\star_\theta&: \CB(\k^{2d},A)\times \CB(\k^{2d},A)\to \CB(\k^{2d},A)\;,\nonumber\\
(F_1,F_2)&\mapsto |2|_\k^{2d}\int_{\k^{2d} \times \k^{2d}} \overline\Psi\big(2[Y,Z]\big)\,
\mu_0^{-2d-1}(Y)\,
\mu_0^{-2d-1}(Z)\,\tau_{\theta Y}\big(J_\theta^{2d+1}F_1\big)\,\tau_Z\big(J^{2d+1}F_2\big)\,dY\,dZ,
\end{align}
is continuous and associative. Moreover when $\theta=0$, we have $F_1\star_{\theta=0} F_2=F_1 \,F_2$.
\end{prop}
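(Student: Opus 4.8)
The plan is to prove, in this order: that \eqref{Star} takes values in $\CB(\k^{2d},A)$ and is continuous there; that $F_1\star_0F_2=F_1F_2$; and that $\star_\theta$ is associative — the last being the real difficulty.

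\emph{Continuity.} The first point is that $J_\theta^{2d+1}F_1$ and $J^{2d+1}F_2$ again lie in $\CB(\k^{2d},A)$: one has $J^mF_i\in\CB(\k^{2d},A)$ with $\mathfrak P_k^A(J^mF_i)=\mathfrak P_{k+m}^A(F_i)$ for every $m$; $J$ commutes with $J_\theta$ and with all translations (Lemma \ref{DILL}(i)); and $J_\theta^{2d+1}$ maps $\CB(\k^{2d},A)$ into $C_b(\k^{2d},A)$ with the bound of Lemma \ref{DILL}(ii). Hence $J^m\!\big(J_\theta^{2d+1}F_1\big)=J_\theta^{2d+1}J^mF_1\in C_b(\k^{2d},A)$ for all $m$, as needed, and the seminorms of $J_\theta^{2d+1}F_1$ and $J^{2d+1}F_2$ are thereby controlled by seminorms of $F_1,F_2$. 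Secondly, for $F\in\CB(\k^{2d},A)$ one has $J^mF\in C_u(\k^{2d},A)$ for every $m$ — this follows as in the proof of Lemma \ref{lem2}, writing $J^mF=\CG(\mu_0^{-2p-1})\ast\big(J^{m+2p+1}F\big)\in L^1\ast L^\infty$ for $p$ large — so the translation action is continuous on every semi-normed space $\big(\CB(\k^{2d},A),\mathfrak P_n^A\big)$, since $\mathfrak P_n^A(\tau_WF-F)=\mathfrak P_0^A(\tau_WJ^nF-J^nF)\to0$ as $W\to0$. Therefore the integrand of \eqref{Star}, seen as a $\CB(\k^{2d},A)$-valued function of $(Y,Z)$, is continuous; by Corollary \ref{cor3} (applied to its two $\CB(\k^{2d},A)$-factors, which are translates of $J_\theta^{2d+1}F_1$ and $J^{2d+1}F_2$, hence have the same seminorms) and Lemma \ref{DILL}(ii), its $\mathfrak P_n^A$-norm is bounded \emph{uniformly in} $(Y,Z)$ by $C_n\,\mathfrak P_{n+c}^A(F_1)\,\mathfrak P_{n+c'}^A(F_2)$ with $c,c'$ depending only on $d$; since $\mu_0^{-2d-1}\otimes\mu_0^{-2d-1}\in L^1(\k^{2d}\times\k^{2d})$ the integrand is Bochner integrable in $\CB(\k^{2d},A)$. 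Consequently $F_1\star_\theta F_2\in\CB(\k^{2d},A)$, composition with each evaluation map recovers the pointwise formula \eqref{Star}, and the same estimate gives continuity of $\star_\theta:\CB(\k^{2d},A)\times\CB(\k^{2d},A)\to\CB(\k^{2d},A)$.

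\emph{The case $\theta=0$.} Here $\tau_{\theta Y}=\Id$ and $J_\theta=\Id$, so \eqref{Star} reads $F_1\star_0F_2=|2|_\k^{2d}\,F_1\cdot\!\int_{\k^{2d}\times\k^{2d}}\overline\Psi(2[Y,Z])\,\mu_0^{-2d-1}(Y)\,\mu_0^{-2d-1}(Z)\,\tau_Z\!\big(J^{2d+1}F_2\big)\,dY\,dZ$, the factor $F_1$ coming out of the Bochner integral by continuity of left multiplication. Carrying out the $Y$-integral first produces a multiple of $\CG(\mu_0^{-2d-1})(-Z)$, supported in $(\tfrac12\CO_\k)^d\times(\tfrac12\CO_\k^o)^d$ by Lemma \ref{lem1}(i), and what remains is exactly the integral representation \eqref{IGR} of $F_2$ (with its inner integral performed first); hence $F_1\star_0F_2=F_1F_2$.

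\emph{Associativity.} For $f_1,f_2\in\CS(\k^{2d})$, \eqref{Star} coincides with the integral (functional) form of the scalar Moyal product \eqref{M}, precisely by the oscillatory trick \eqref{OT} together with Lemma \ref{DILL}(iii) to absorb the dilation by $\theta$; tensoring, $(f_1\otimes a_1)\star_\theta(f_2\otimes a_2)=(f_1\star_\theta f_2)\otimes(a_1a_2)$ on $\CS(\k^{2d})\otimes A$, and since $f_1\star_\theta f_2=\bO_\theta^{-1}\big(\bO_\theta(f_1)\bO_\theta(f_2)\big)$ with $\bO_\theta$ the (bijective) Weyl quantization, $\star_\theta$ is associative on $\CS(\k^{2d})\otimes A$. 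One cannot now conclude by density: $\CS(\k^{2d},A)$ is \emph{not} dense in $\CB(\k^{2d},A)$ (it does not approximate constant functions in $\mathfrak P_0^A$), so the computation must be done on $\CB(\k^{2d},A)$ itself. Using the $\CB(\k^{2d},A)$-valued integral representation of $F_1\star_\theta F_2$ from the first part, together with integral representations for $J^n(F_1\star_\theta F_2)$ and $J_\theta^n(F_1\star_\theta F_2)$ obtained by one more application of the oscillatory trick (as in Lemma \ref{IRL} and Corollary \ref{cor3}, the extra powers of $\mu_0$ governed by Lemma \ref{lem1} and the commutations by Lemma \ref{DILL}(i)), one writes both $(F_1\star_\theta F_2)\star_\theta F_3$ and $F_1\star_\theta(F_2\star_\theta F_3)$ as absolutely convergent integrals over $\k^{8d}$ whose integrand is a product of a character of $\k$, integer powers of translated copies of $\mu_0$, and suitably translated copies of $J_\theta^{N}F_1$, $J^{N}F_2$, $J^{N}F_3$ for appropriate exponents; a measure-preserving affine change of variables in $\k^{8d}$ — the $p$-adic analogue of the change of variables proving associativity of the Moyal product — together with additivity of $\Psi$ then identifies the two expressions. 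I expect this last step to be the main obstacle: because each $\star_\theta$ carries $J_\theta^{2d+1}$ on its left slot and $J^{2d+1}$ on its right slot, with the shift $\theta Y$ behind the left slot, one must track how these operators interlace through the nesting of the two products — this is where one repeatedly invokes $[I_\theta,J]=[I,J_\theta]=0$ (equivalently $[J_\theta,J]=0$, Lemma \ref{DILL}(i)) and the translation- and scaling-invariances of Lemmas \ref{lem1} and \ref{DILL}. An alternative bypassing the $\k^{8d}$ bookkeeping would be to prove $\bO_\theta(F_1)\bO_\theta(F_2)=\bO_\theta(F_1\star_\theta F_2)$ as \emph{bounded} operators — the symbols lying in the Calderon--Vaillancourt class by \eqref{CVB} — and then to invoke associativity of operator composition; but in the $C^*$-algebra-valued case this needs the $A$-valued version of \eqref{CVB}, available only later in the paper, so arguing directly is preferable here.
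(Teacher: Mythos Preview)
Your continuity argument and the $\theta=0$ case are essentially the paper's, with a bit more detail on Bochner integrability; nothing to add there.

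For associativity, however, you have taken an unnecessarily hard road, and you do not actually complete it. You correctly observe that $\CS(\k^{2d},A)$ is not dense in $\CB(\k^{2d},A)$, and you therefore propose either a direct $\k^{8d}$ computation (which you leave as an ``expected obstacle'') or an appeal to the $A$-valued Calder\'on--Vaillancourt theorem (which is not yet available). But there is a third, much simpler, density argument that you have overlooked: the \emph{algebraic} tensor product $\CB(\k^{2d})\otimes A$ \emph{is} dense in $\CB(\k^{2d},A)$ for the Fr\'echet topology (this is used already in the proof of Lemma~\ref{lem2}; the seminorms $\mathfrak P_n^A$ are cross-seminorms and $\CB(\k^{2d},A)$ sits inside the nuclear completion $\CS'(\k^{2d})\widehat\otimes A$). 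So the paper proceeds as follows: for $A=\C$, associativity on all of $\CB(\k^{2d})$ --- not just on $\CS(\k^{2d})$ --- follows from Bechata's result that $\bO_\theta:\CB(\k^{2d})\to\CB(L^2(\k^d))$ is \emph{injective} (Th\'eor\`eme~3.3 of \cite{Bechata}, via the estimate~\eqref{CVB}); this immediately gives associativity on $\CB(\k^{2d})\otimes A$; and then the continuity estimate~\eqref{IN1} together with density of $\CB(\k^{2d})\otimes A$ in $\CB(\k^{2d},A)$ finishes the job by a routine $\varepsilon$-approximation of the associator.

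Your proposed direct route --- unfolding both iterated products as absolutely convergent integrals over $\k^{8d}$ and matching them by an affine change of variables --- is in principle feasible, but the bookkeeping with the interlaced $J_\theta$ and $J$ factors is genuinely painful, and you have not carried it out. The paper's approach buys you associativity essentially for free once you have the scalar injectivity of $\bO_\theta$ on $\CB(\k^{2d})$ and the right dense subspace.
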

\begin{proof}
For $F_1,F_2\in \CB(\k^{2d},A)$ and $n\in\N$, we have
\begin{align*}
J^n(F_1\star_\theta F_2)= |2|_\k^{2d}
\int_{\k^{2d}\times\k^{2d}} \overline\Psi\big(2[Y,Z]\big)\,
\mu_0^{-2d-1}(Y)\,\mu_0^{-2d-1}(Z)\,J^n\big(\tau_{\theta Y}\big(J_\theta^{2d+1}F_1\big)\,
\tau_Z\big(J^{2d+1}F_2\big)\big)
\,dY\,dZ.
\end{align*}
Hence we get
$$
\mathfrak P^A_n\big(F_1\star_\theta F_2\big)\leq
\|\mu_0^{-2d-1}\|_1^2 \sup_{Y,Z\in\k^{2d}} 
\mathfrak P^A_n\big(\tau_{\theta Y}\big(J_\theta^{2d+1}F_1\big)\,\tau_Z\big(J^{2d+1}F_2\big)\big).
$$
Therefore, by Corollary \ref{cor3} and Lemma \ref{DILL} (ii) (and the fact that $J$ and $J_\theta$
commute), we deduce 
\begin{align}
\label{IN1}
\mathfrak P^A_n\big(F_1\star_\theta F_2\big)&\leq \,
\|\mu_0^{-2d-1}\|_1^6 \sup_{Y,Z\in\k^{2d}} 
\mathfrak P^A_{2d+1+n}\big(\tau_{\theta Y}\big(J_\theta^{2d+1}F_1\big)\big)\,
\mathfrak P^A_{2d+1+n}\big(\tau_Z\big(J^{2d+1}F_2\big)\big)
\\&= \, \|\mu_0^{-2d-1}\|_1^6\,
\mathfrak P^A_{2d+1+n}\big(J_\theta^{2d+1}F_1\big)\,
\mathfrak P^A_{2d+1+n}\big(J^{2d+1}F_2\big)\nonumber\\
&= \, \|\mu_0^{-2d-1}\|_1^8\,
\mathfrak P^A_{6d+3+n}\big(F_1\big)\,
\mathfrak P^A_{4d+2+n}\big(F_2\big),\nonumber
\end{align}
which  proves continuity. 

Associativity is obvious when $A=\C$: it is the shadow of  the associativity of the algebra of bounded
operators on $L^2(\k^d)$
(see \cite[Th\'eor\`eme 3.3]{Bechata} from which it follows that the quantization map $\bO_\theta^\C:
\CB(\k^{2d})\to\CB\big(L^2(\k^d)\big)$ is injective). It immediately implies the associativity at the
level of the algebraic tensor product  $\CB(\k^{2d})\otimes A$. We conclude by density of the former 
in $\CB(\k^{2d},A)$: Fix $\eps>0$ and $n\in\N$. For $F_j\in\CB(\k^{2d},A)$, we let 
$F_j^\eps\in\CB(\k^{2d})\otimes A$ be
such that $\mathfrak P_k^A(F_j-F_j^\eps)\leq\eps$ for any $j=1,2,3$, and $k\in\{ 6d+3+n,
8d+4+n,10d+5+n,12d+6+n\}$. Then we get
\begin{align*}
&F_1\star_\theta (F_2\star_\theta F_3)-(F_1\star_\theta F_2)\star_\theta F_3
=\\
&\qquad(F_1-F^\eps_1)\star_\theta(F_2\star_\theta F_3)
+F_1^\eps\star_\theta \big((F_2-F_2^\eps)\star_\theta F_3\big)
+F_1^\eps\star_\theta \big(F_2^\eps\star_\theta (F_3-F_3^\eps)\big)\\
&\qquad-\big((F_1-F_1^\eps)\star_\theta F_2\big)\star_\theta F_3
-\big(F_1^\eps\star_\theta (F_2-F_2^\eps)\big)\star_\theta F_3
-(F_1^\eps\star_\theta F_2^\eps)\star_\theta( F_3-F_3^\eps),
\end{align*}
and  from \eqref{IN1}:
\begin{align*}
&\|\mu_0^{-2d-1}\|_1^{-16}\mathfrak P^A_n\big(F_1\star_\theta (F_2\star_\theta F_3)-(F_1\star_\theta F_2)\star_\theta F_3\big)\\
&\leq 
\mathfrak P^A_{6d+3+n}(F_1-F^\eps_1)\mathfrak P^A_{10d+5+n}(F_2)\mathfrak P^A_{8d+4+n}(F_3)
+\mathfrak P^A_{6d+3+n}(F_1^\eps) \mathfrak P^A_{10d+5+n}(F_2-F_2^\eps)\mathfrak P^A_{8d+4+n}
( F_3)\\
&+\mathfrak P^A_{6d+3+n}(F_1^\eps)\mathfrak P^A_{10d+5+n}(F_2^\eps)
\mathfrak P^A_{8d+4+n} (F_3-F_3^\eps)
+\mathfrak P^A_{12d+6+n}(F_1-F_1^\eps)\mathfrak P^A_{10d+5+n}( F_2\big)\mathfrak P^A_{8d+4+n}
( F_3)\\
&+\mathfrak P^A_{12d+6+n}(F_1^\eps)\mathfrak P^A_{10d+5+n}(F_2-F_2^\eps)\big)
\mathfrak P^A_{8d+4+n} (F_3)
+\mathfrak P^A_{12d+6+n} (F_1^\eps) \mathfrak P^A_{10d+5+n} ( F_2^\eps)\mathfrak P^A_{8d+4+n} 
(F_3-F_3^\eps)\\
&\leq \eps\Big(\mathfrak P^A_{10d+5+n}(F_2)\mathfrak P^A_{8d+4+n}(F_3)
+\mathfrak P^A_{6d+3+n}(F_1^\eps)\mathfrak P^A_{8d+4+n}
( F_3)
+\mathfrak P^A_{6d+3+n}(F_1^\eps)\mathfrak P^A_{10d+5+n}(F_2^\eps)\\
&+\mathfrak P^A_{10d+5+n}( F_2\big)\mathfrak P^A_{8d+4+n}
( F_3)
+\mathfrak P^A_{12d+6+n}(F_1^\eps)
\mathfrak P^A_{8d+4+n} (F_3)+
\mathfrak P^A_{12d+6+n} (F_1^\eps) \mathfrak P^A_{10d+5+n} ( F_2^\eps)\Big).
\end{align*}
Using last $\mathfrak P^A_{k} (F_j^\eps)\leq \eps+\mathfrak P^A_{k} (F_j)$ ($j=1,2,3$, $k\in\{ 6d+3+n,
8d+4+n,10d+5+n,12d+6+n\}$), we deduce that for all $n\in\N$, 
$F_1\star_\theta (F_2\star_\theta F_3)-(F_1\star_\theta F_2)\star_\theta F_3$ can be rendered as small
as one wishes in the seminorms $\mathfrak P^A_n$, hence this associator vanishes.

The fact that the deformed product coincides with the point-wise product when $\theta=0$
follows directly from Lemma \ref{IRL}.
\end{proof}
\begin{rmk}
\label{morepower}
 Obviously, we have 
$$
F_1\star_\theta F_2=|2|_\k^{2d}
\int_{\k^{2d}\times\k^{2d}} \overline\Psi\big(2[Y,Z]\big)\,
\mu_0^{-N}(Y)\,\mu^{-N}_0(Z)\,\tau_{\theta Y}\big(J_\theta^{N}F_1\big)\,\tau_Z\big(J^{N}F_2\big)\,dY\,dZ,
$$
for any $N\in\N$ such that $N\geq 2d+1$. Using moreover the commutation of $J$ with translations,
we also deduce the point-wise expression:
$$
F_1\star_\theta F_2(X) = |2|_\k^{2d}\int_{\k^{2d} \times \k^{2d}} \overline\Psi\big(2[Y,Z]\big)\,
\mu_0^{-N}(Y)\,\mu^{-N}_0(Z)\,\big(J_\theta^{N}F_1\big)(X+\theta Y)\,\big(J^{N}F_2\big)(X+Z)\,dY\,dZ.
$$
Last, when  $F_1,F_2\in \CS(\k^{2d},A)$, we can undo the oscillatory trick to get:
$$
F_1\star_\theta F_2(X) = |2|_\k^{2d}\int_{\k^{2d} \times \k^{2d}} \overline\Psi\big(2[Y,Z]\big)\,
F_1(X+\theta Y)\,F_2(X+Z)\,dY\,dZ.
$$
\end{rmk}
The following representation of the product $\star_\theta$ will be useful to handle the deformed product
in a rather simple way.
\begin{lem}
\label{sequence}
Let $\theta\in\CO_\k$. 
For $F_1,F_2\in\CB(\k^{2d},A)$ and $N\in\N$, set
$$
F_N:=|2|_\k^{2d} \int_{\k^{2d}\times\k^{2d}} \overline\Psi\big(2[Y,Z]\big)\,
\tau_{\theta Y}(F_1)\,\tau_{Z}( F_2)\,e^{-\mu_0(Y)\mu_0(Z)/N}\,dY\,dZ.
$$
Then, the sequence $(F_N)_{N\in\N}$ belongs to $\CB(\k^{2d},A)$ and 
converges to $F_1\star_\theta F_2$ for the topology of $\CB(\k^{2d},A)$.
\end{lem}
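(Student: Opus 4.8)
The plan is to treat the factor $e^{-\mu_0(Y)\mu_0(Z)/N}$ as a harmless regularisation: it is bounded by $1$, converges to $1$, and — the crucial point — it is invariant under translations of $(Y,Z)$ by $O\times O$, where $O:=(\tfrac12\CO_\k)^d\times(\tfrac12\CO_\k^o)^d$, because $\mu_0$ has this invariance. First I would settle that $F_N$ is a well-defined element of $\CB(\k^{2d},A)$ with seminorm bounds uniform in $N$, directly from the definition. Indeed, by \cref{cor3} and the $\tau$-invariance of the seminorms, $\mathfrak P_n^A\big(\tau_{\theta Y}(F_1)\,\tau_Z(F_2)\big)\le\|\mu_0^{-2d-1}\|_1^4\,\mathfrak P_{n+2d+1}^A(F_1)\,\mathfrak P_{n+2d+1}^A(F_2)$, uniformly in $Y,Z$, and translation is continuous for the topology of $\CB(\k^{2d},A)$ (since $J$ commutes with translations and $\CB(\k^{2d},A)\subset C_u(\k^{2d},A)$), so $(Y,Z)\mapsto\tau_{\theta Y}(F_1)\,\tau_Z(F_2)$ is a continuous $\CB(\k^{2d},A)$-valued function with seminorms bounded uniformly in $(Y,Z)$. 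Since $\mu_0\ge 1$ gives $e^{-\mu_0(Y)\mu_0(Z)/N}\le e^{-\mu_0(Y)/2N}\,e^{-\mu_0(Z)/2N}$, and since $e^{-t/2N}\le C_N\,t^{-2d-1}$ for $t\ge 1$ shows $e^{-\mu_0/2N}$ is dominated by a multiple of $\mu_0^{-2d-1}\in L^1(\k^{2d})$, the integral defining $F_N$ converges absolutely in the (complete) Fréchet algebra $\CB(\k^{2d},A)$, with $\mathfrak P_n^A(F_N)\le C_n\,\mathfrak P_{n+2d+1}^A(F_1)\,\mathfrak P_{n+2d+1}^A(F_2)$ independently of $N$.

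Next I would rewrite $F_N$ in the form used to define $\star_\theta$. Fix $M\ge 2d+1$ and $X\in\k^{2d}$. The map $(Y,Z)\mapsto F_1(X+\theta Y)\,F_2(X+Z)\,e^{-\mu_0(Y)\mu_0(Z)/N}$ lies in $\CB(\k^{2d}\times\k^{2d},A)$: when one applies $J^a\otimes J^b$ in the variables $(Y,Z)$, the $O\times O$-invariant factor $e^{-\mu_0(Y)\mu_0(Z)/N}$ commutes with the convolutions defining $J$ (which are convolutions by distributions supported in $O$, by \lref{lem1}(i)), while $J^a$ acting on $[Y\mapsto F_1(X+\theta Y)]=D_\theta\tau_X F_1$ gives $D_\theta J_\theta^a\tau_X F_1=[Y\mapsto(J_\theta^a F_1)(X+\theta Y)]$ by \lref{DILL}(iii) and the commutation of $J_\theta$ with translations, and similarly in the $Z$-slot; the outcome $e^{-\mu_0(Y)\mu_0(Z)/N}\,(J_\theta^a F_1)(X+\theta Y)\,(J^b F_2)(X+Z)$ is bounded. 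The oscillatory trick \eqref{OT} then yields, for every $M\ge 2d+1$,
$$
F_N(X)=|2|_\k^{2d}\int_{\k^{2d}\times\k^{2d}}\overline\Psi\big(2[Y,Z]\big)\,\mu_0^{-M}(Y)\,\mu_0^{-M}(Z)\,e^{-\mu_0(Y)\mu_0(Z)/N}\,\tau_{\theta Y}\big(J_\theta^{M}F_1\big)(X)\,\tau_Z\big(J^{M}F_2\big)(X)\,dY\,dZ,
$$
an integral absolutely convergent for every seminorm of $\CB(\k^{2d},A)$ since $\mu_0^{-M}\in L^1(\k^{2d})$ and $e^{-\mu_0(Y)\mu_0(Z)/N}\le 1$.

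Now the convergence follows from a single dominated-convergence argument. By \pref{prop1} and \rref{morepower}, $F_1\star_\theta F_2(X)$ is given by exactly the same integral with the cutoff replaced by $1$. Subtracting, applying $J^n$ (which acts in $X$, hence commutes with the $(Y,Z)$-integration, with $\tau_{\theta Y}$ and $\tau_Z$, and leaves the $X$-independent cutoff untouched), and bounding $\big\|J^n\!\big(\tau_{\theta Y}(J_\theta^{M}F_1)\,\tau_Z(J^{M}F_2)\big)(X)\big\|_A$ by \cref{cor3}, the $\tau$-invariance of the seminorms and \lref{DILL}(ii), one gets a constant $C\,\mathfrak P_K^A(F_1)\,\mathfrak P_K^A(F_2)$ with $K=K(n,M,d)$, uniform in $X,Y,Z$ and $N$. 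Hence
$$
\mathfrak P_n^A\big(F_N-F_1\star_\theta F_2\big)\le C\,\mathfrak P_K^A(F_1)\,\mathfrak P_K^A(F_2)\int_{\k^{2d}\times\k^{2d}}\mu_0^{-M}(Y)\,\mu_0^{-M}(Z)\,\big(1-e^{-\mu_0(Y)\mu_0(Z)/N}\big)\,dY\,dZ,
$$
and the right-hand side tends to $0$ as $N\to\infty$ by dominated convergence, since $0\le 1-e^{-\mu_0(Y)\mu_0(Z)/N}\le 1$ and $\mu_0^{-M}\otimes\mu_0^{-M}\in L^1(\k^{2d}\times\k^{2d})$. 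This is precisely convergence of $(F_N)$ to $F_1\star_\theta F_2$ in $\CB(\k^{2d},A)$.

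The only genuinely delicate point, I expect, is the second step: checking that the cutoff may be carried through the oscillatory trick and past the operators $J$ and $J_\theta$. This rests entirely on the $O\times O$-invariance of $e^{-\mu_0(Y)\mu_0(Z)/N}$, inherited from that of $\mu_0$ — which is also the reason \lref{DILL} forced $\theta\in\CO_\k$ — so that multiplication by this factor commutes with the relevant convolution operators. Everything else is a mechanical reuse of \cref{cor3}, \lref{DILL} and the Peetre inequality, closed off by the dominated convergence theorem in the Banach space $A$.
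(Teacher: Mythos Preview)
Your proof is correct and follows essentially the same route as the paper: apply the oscillatory trick to $F_N$ using that the cutoff $e^{-\mu_0(Y)\mu_0(Z)/N}$ is $O\times O$-invariant (so it commutes with $J^M\otimes J^M$ and with $J_\theta$), subtract the analogous expression for $F_1\star_\theta F_2$ from \rref{morepower}, and estimate via \cref{cor3} and \lref{DILL}. The only cosmetic difference is the closing step: you invoke dominated convergence on $\int\mu_0^{-M}(Y)\mu_0^{-M}(Z)\,(1-e^{-\mu_0(Y)\mu_0(Z)/N})\,dYdZ$, whereas the paper takes $M=2d+2$, peels off one extra factor $\mu_0^{-1}(Y)\mu_0^{-1}(Z)$, and uses the elementary bound $\sup_{x>0}(1-e^{-x/N})/x\le 1/N$ to obtain the explicit rate $\mathfrak P_n^A(F_1\star_\theta F_2-F_N)=O(1/N)$.
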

\begin{proof}
That $F_N$, $N\in\N$, belongs to $\CB(\k^{2d},A)$ follows from arguments almost identical
to those given in the first part of the proof of Proposition \ref{prop1}. Next, 
using the oscillatory trick together with the commutativity of $I$ and $J$, we get
$$
F_N=|2|_\k^{2d} \int \overline\Psi\big(2[Y,Z]\big)\,
\mu_0^{-2d-2}(Y)\,\mu_0^{-2d-2}(Z)\,\tau_{\theta Y}\big(J_\theta^{2d+2}F_1\big)\,\tau_Z\big(J^{2d+2}F_2\big)
\,e^{-\mu_0(Y)\mu_0(Z)/N}\,dY\,dZ,
$$
and thus (using  Remark \ref{morepower})
\begin{align*}
&F_1\star_\theta F_2-F_N=\\&
|2|_\k^{2d}\int\overline\Psi\big(2[Y,Z]\big)\,
\mu_0^{-2d-2}(Y)\,\mu_0^{-2d-2}(Z)\,\tau_{\theta Y}\big(J_\theta^{2d+2}F_1\big)\,\tau_Z\big(J^{2d+2}F_2\big)
\,\Big(1-e^{-\mu_0(Y)\mu_0(Z)/N}\Big)\,dY\,dZ.
\end{align*}
Using Corollary  \ref{cor3} and Lemma \ref{DILL} (ii), we then deduce
$$
\mathfrak P_n^A(F_1\star_\theta F_2-F_N)\leq \|\mu_0^{-2d-1}\|_1^8\,
\mathfrak P_{n+6d+4}^A(F_1) \mathfrak P_{n+4d+3}^A(F_2)
\sup_{Y,Z\in\k^{2d}}\frac{1-e^{-\mu_0(Y)\mu_0(Z)/N}}{\mu_0(Y)\mu_0(Z)}.
$$
Observing then that
$$
\sup_{Y,Z\in\k^{2d}}\frac{1-e^{-\mu_0(Y)\mu_0(Z)/N}}{\mu_0(Y)\mu_0(Z)}\leq 
\sup_{x>0}\frac{1-e^{-x/N}}x\leq \frac1N ,
$$
we get the result.
\end{proof}

We also note:
\begin{lem}
\label{SIB}
Let $\theta\in\CO_\k$. Then,
  $\big(\CS(\k^{2d},A),\star_\theta\big)$ is an ideal of $\big(\CB(\k^{2d},A),\star_\theta\big)$. 
\end{lem}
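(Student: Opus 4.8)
The plan is to show two things: first, that $\CS(\k^{2d},A)\subset\CB(\k^{2d},A)$ is $\star_\theta$-stable, and second, that it absorbs $\CB(\k^{2d},A)$ on both sides. The containment $\CS(\k^{2d},A)\subset\CB(\k^{2d},A)$ is already available (it follows from the definitions, since $J^n$ maps $\CS$ continuously into itself, hence into $L^\infty(\k^{2d},A)$), so the content is purely the ideal property. I would work throughout with the point-wise expression from Remark \ref{morepower},
$$
F_1\star_\theta F_2(X) = |2|_\k^{2d}\int_{\k^{2d} \times \k^{2d}} \overline\Psi\big(2[Y,Z]\big)\,
\mu_0^{-N}(Y)\,\mu_0^{-N}(Z)\,\big(J_\theta^{N}F_1\big)(X+\theta Y)\,\big(J^{N}F_2\big)(X+Z)\,dY\,dZ,
$$
for $N\geq 2d+1$, and estimate the seminorms $\mathfrak P^A_{m,n}$ of the result when one factor is Schwartz.

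The key point is that the operator $I^m$ acts on the integrand only through the prefactor in the variable $X$, and we need to reproduce a factor $\mu_0^m(X)$. Here I would use the Peetre inequality $\mu_0(X)\leq\mu_0(X+\theta Y)\mu_0(-\theta Y)\leq\mu_0(X+\theta Y)\mu_0(Y)$ (using $\mu_0(\theta Y)\leq\mu_0(Y)$ for $\theta\in\CO_\k$, as in Lemma \ref{DILL}) when $F_1$ is the Schwartz factor, so that $\mu_0^m(X)$ is dominated by $\mu_0^m(X+\theta Y)$ times an extra $\mu_0^m(Y)$, the latter being absorbed by raising $N$; symmetrically, $\mu_0(X)\leq\mu_0(X+Z)\mu_0(Z)$ when $F_2$ is the Schwartz factor. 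Concretely, for $a\in\CB(\k^{2d},A)$ and $f\in\CS(\k^{2d},A)$: applying $I^mJ^n$ to $f\star_\theta a$ and choosing $N$ large enough, one commutes $J^n$ inside (it commutes with translations, with $J_\theta$, and with multiplication by $\mu_0$ and its translates), bounds $\mu_0^m(X)$ by $\mu_0^m(X+\theta Y)\mu_0^m(Y)$, recognizes $\mu_0^m(X+\theta Y)(J_\theta^{N}J^n f)(X+\theta Y)=(I^m J_\theta^{N}J^n f)(X+\theta Y)$ up to the identity $I^m D_\theta = D_\theta I_\theta^m$ and $[I,J_\theta]=[I_\theta,J]=0$ from Lemma \ref{DILL}(i), absorbs $\mu_0^m(Y)$ into $\mu_0^{-N}(Y)$, and finishes with Corollary \ref{cor3}, Lemma \ref{DILL}(ii) and the integrability $\mu_0^{-N}\in L^1$. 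This yields an estimate of the shape
$$
\mathfrak P^A_{m,n}(f\star_\theta a)\leq C\,\mathfrak P^A_{m,n+k}(f)\,\mathfrak P^A_{n+k}(a),
$$
for a constant $C$ depending only on $\|\mu_0^{-2d-1}\|_1$ and a shift $k$ depending only on $d$, and symmetrically for $a\star_\theta f$ (using the $\mu_0^m(X)\leq\mu_0^m(X+Z)\mu_0^m(Z)$ bound instead). Since these seminorms define the topology of $\CS(\k^{2d},A)$, this shows $f\star_\theta a,\ a\star_\theta f\in\CS(\k^{2d},A)$.

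The main obstacle, such as it is, is bookkeeping: one must be careful that when $f$ sits on the \emph{left} the dilation $\theta$ lands on $f$, so it is $J_\theta^N f$ (not $J^N f$) whose $I^m$-action must be controlled, which is exactly why Lemma \ref{DILL}(i) and (iii) are needed; and when $f$ sits on the \emph{right} the argument is cleaner since no dilation intervenes there. An alternative, slightly slicker route — which I would mention as a remark rather than carry out — is to invoke the already-established fact (Remarks after Corollary \ref{cor3}) that $\CS(\k^{2d},A)$ is an ideal of $\CB(\k^{2d},A)$ for the \emph{point-wise} product together with Lemma \ref{sequence}: the approximants $F_N$ of $f\star_\theta a$ are built from translates and point-wise products of $f$ with $a$, hence lie in $\CS(\k^{2d},A)$ for each $N$, but since $\CS(\k^{2d},A)$ is not closed in $\CB(\k^{2d},A)$ this only gives membership in the closure, so one still needs the direct seminorm estimate above to conclude. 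I would therefore present the direct estimate as the actual proof.
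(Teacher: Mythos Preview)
Your proposal is correct and follows essentially the same route as the paper: both use the point-wise oscillatory expression from Remark \ref{morepower}, invoke the Peetre inequality to transfer the factor $\mu_0^m(X)$ onto the Schwartz argument (picking up an extra $\mu_0^m(Y)$ or $\mu_0^m(Z)$ to be absorbed by raising the exponent $N$), and then finish with Corollary \ref{cor3} and Lemma \ref{DILL}. One small bookkeeping slip: in your displayed estimate the shift $k$ on the $\CB$-side cannot depend on $d$ alone --- absorbing $\mu_0^m(Y)$ into $\mu_0^{-N}(Y)$ forces $N\geq m+2d+1$, so the seminorm on $a$ inherits an $m$-dependence (the paper's final bound is $\mathfrak P^A_{4d+2+m+n}(F)$); this does not affect the conclusion.
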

\begin{proof}
Let  $f\in\CS(\k^{2d},A)$, $F\in\CB(\k^{2d},A)$ and $n,m\in\N$.
For $M,N$  arbitrary integers satisfying $M,N\geq 2d+1$. By Remark \ref{morepower}
we have
$$
\mathfrak P^A_{m,n}\big(f\star_\theta F\big)\leq\int_{\k^{2d}\times\k^{2d}} 
\mu_0(Y)^{-N}\,\mu_0(Z)^{-M}
\mathfrak P^A_{m,n}\big(\tau_{\theta Y}\big(J_\theta^M f\big)\,\tau_Z\big(J^NF\big)\big)
\,dY\,dZ.
$$
Using Corollary \ref{cor3} and Lemma \ref{DILL}  again and  the Peetre inequality, we deduce
\begin{align*}
\mathfrak P^A_{m,n}\big(\tau_{\theta Y}\big(J_\theta^M f\big)\,\tau_Z\big(J^NF\big)\big)&=
\mathfrak P^A_{n}\big(\mu_0^m\tau_{\theta Y}\big(J^M_\theta f\big)\,\tau_Z\big(J^NF\big)\big)\\
&\leq
\|\mu_0^{-2d-1}\|_1^4 \,
\mathfrak P^A_{2d+1+n}\big(\mu_0^m\tau_{\theta Y}\big(J^M_\theta f\big)\big)\,\mathfrak P^A_{2d+1+n}\big(\tau_Z\big(J^NF\big)\big)\\
&\leq \|\mu_0^{-2d-1}\|_1^6\, \mu_0^m(Y)\,
\mathfrak P^A_{m,4d+2+n+M}(f)\,\mathfrak P^A_{2d+1+n+N}(F).
\end{align*}
Choosing $M=2d+1$ and $N=2d+1+m$, we deduce
$$
\mathfrak P^A_{m,n}\big(f\star_\theta F\big)\leq \|\mu_0^{-2d-1}\|_1^8 \, 
\mathfrak P^A_{m,6d+3+n}(f)\,\mathfrak P^A_{4d+2+m+n}(F).
$$
The case of $F\star_\theta f$ is similar. 
\end{proof}

\begin{lem}
\label{invol}
Let $\theta\in\CO_\k$. 
With $*$  the involution of $A$, we set $F^*(X):=F(X)^*$.
Then we have $(F_1\star_\theta F_2)^*=F_2^*\star_\theta F_1^*$ for all $F_1,F_2\in\CB(\k^{2d},A)$.
\end{lem}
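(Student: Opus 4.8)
The plan is to reduce to the scalar case $A=\C$ through the quantization map $\bO_\theta$, then to the algebraic tensor product $\CB(\k^{2d})\otimes A$, and finally to extend to all of $\CB(\k^{2d},A)$ by continuity and density. First, when $\theta=0$ the product $\star_0$ is the pointwise product, so $(F_1\star_0F_2)^*=F_2^*\star_0F_1^*$ holds pointwise in the $C^*$-algebra $A$; we henceforth assume $\theta\ne0$. Second, the pointwise involution $F\mapsto F^*$ is isometric for each seminorm $\mathfrak P^A_{n}$: since $\mu_0$ is real valued and even, its symplectic Fourier transform $\CG(\mu_0^n)$ is a real distribution, hence the convolution operator $J^n$ commutes with the pointwise involution, $J^n(F^*)=(J^nF)^*$; as the involution of $A$ is isometric, $\mathfrak P^A_{n}(F^*)=\sup_X\|(J^nF)(X)^*\|_A=\mathfrak P^A_{n}(F)$. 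In particular $F\mapsto F^*$ is continuous on $\CB(\k^{2d},A)$.

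For the scalar case, recall that $\bO_\theta(f_1\star_\theta f_2)=\bO_\theta(f_1)\,\bO_\theta(f_2)$ for $f_1,f_2\in\CB(\k^{2d})$ and that $\bO_\theta$ is injective on $\CB(\k^{2d})$, by \cite[Th\'eor\`eme 3.3]{Bechata} (this is also what underlies the associativity in Proposition \ref{prop1}). Moreover $\bO_\theta$ is compatible with the involutions, $\bO_\theta(\overline f)=\bO_\theta(f)^*$: this is immediate from the integral representation \eqref{SF} when $f\in L^1(\k^{2d})$, since every $\Omega_\theta(X)$ is selfadjoint, and it extends to $f\in\CB(\k^{2d})$ by testing against matrix elements $\langle\psi,\bO_\theta(f)\phi\rangle$ between Schwartz vectors and using $\overline{W^\theta_{\psi,\phi}}=W^\theta_{\phi,\psi}$, again by selfadjointness of $\Omega_\theta$ (cf.\ \eqref{Wigner}). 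Combining these facts, for $f_1,f_2\in\CB(\k^{2d})$ one gets
\begin{align*}
\bO_\theta\big(\overline{f_1\star_\theta f_2}\big)=\bO_\theta(f_1\star_\theta f_2)^*=\big(\bO_\theta(f_1)\bO_\theta(f_2)\big)^*=\bO_\theta(f_2)^*\bO_\theta(f_1)^*=\bO_\theta(\overline{f_2})\bO_\theta(\overline{f_1})=\bO_\theta\big(\overline{f_2}\star_\theta\overline{f_1}\big),
\end{align*}
whence $\overline{f_1\star_\theta f_2}=\overline{f_2}\star_\theta\overline{f_1}$ by injectivity of $\bO_\theta$.

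It remains to pass to $\CB(\k^{2d},A)$. On the algebraic tensor product $\CB(\k^{2d})\otimes A$, using $(f\otimes a)\star_\theta(g\otimes b)=(f\star_\theta g)\otimes(ab)$ and $(f\otimes a)^*=\overline f\otimes a^*$, one computes for elementary tensors $F_i=f_i\otimes a_i$ that $(F_1\star_\theta F_2)^*=\overline{f_1\star_\theta f_2}\otimes(a_1a_2)^*=(\overline{f_2}\star_\theta\overline{f_1})\otimes(a_2^*a_1^*)=F_2^*\star_\theta F_1^*$, and the identity on all of $\CB(\k^{2d})\otimes A$ follows by additivity of the involution and bilinearity of $\star_\theta$. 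Since $\CB(\k^{2d})\otimes A$ is dense in $\CB(\k^{2d},A)$, the involution is continuous by the first paragraph, and $\star_\theta$ is jointly continuous by Proposition \ref{prop1}, the maps $(F_1,F_2)\mapsto(F_1\star_\theta F_2)^*$ and $(F_1,F_2)\mapsto F_2^*\star_\theta F_1^*$ are jointly continuous and agree on a dense subset, hence everywhere.

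The only genuinely delicate point is the one concealed in the reduction to the scalar case: when $\theta$ is not a unit of $\CO_\k$, the identity is \emph{not} apparent from the defining integral formula \eqref{Star}, because the natural attempt to interchange the roles of $Z$ and $\theta Y$ (and of $J$ and $J_\theta$) is not a legitimate change of variables --- the dilation $X\mapsto\theta X$ is not a measure-preserving bijection of $\k^{2d}$. This is precisely why the argument has to be routed through the injective quantization map $\bO_\theta$ rather than through a direct substitution in \eqref{Star}.
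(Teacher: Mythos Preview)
Your proof is correct but takes a different route from the paper's. The paper proceeds directly via the approximating sequence of Lemma~\ref{sequence}: it applies the pointwise involution under the absolutely convergent integral defining $F_N$, performs the change of variables $(Y,Z)\mapsto(\theta^{-1}Z,\theta Y)$ (valid for $\theta\ne0$), and then argues that the resulting sequence---with modified damping $e^{-\mu_0(\theta Y)\mu_0(\theta^{-1}Z)/N}$---still converges to $F_2^*\star_\theta F_1^*$ by repeating the estimate of Lemma~\ref{sequence}, using Lemma~\ref{DILL}(i). Your approach instead mirrors the paper's own strategy for associativity in Proposition~\ref{prop1}: establish the scalar case through injectivity and the $*$-compatibility of the quantization map (both available from Bechata's work, independently of this lemma), lift algebraically to $\CB(\k^{2d})\otimes A$, and close by density and the joint continuity of $\star_\theta$. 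Your argument is more structural and reuses existing machinery cleanly; the paper's is self-contained and avoids appealing to the pseudo-differential calculus, which is consistent with Remark~\ref{C2}.

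One inaccuracy in your final paragraph: for $\theta\ne0$ the coupled substitution $(Y,Z)\mapsto(\theta^{-1}Z,\theta Y)$ \emph{is} a measure-preserving bijection of $\k^{2d}\times\k^{2d}$, since the Jacobian factors $|\theta|_\k^{\pm2d}$ cancel. The genuine obstruction to reading the identity off from \eqref{Star} is not the measure but the mismatch of weights and regularising operators after the substitution: $\mu_0^{-N}(\theta^{-1}Z)$ replaces $\mu_0^{-N}(Z)$ and $J_\theta$ sits where $J$ should, and these do not coincide when $\theta$ is not a unit. This is exactly what the paper circumvents by working with the undressed sequence $F_N$ of Lemma~\ref{sequence} rather than with the oscillatory formula.
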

\begin{proof}
Observe that the involution defined on $\CB(\k^{2d},A)$ 
 is continuous and commutes with $J$ and $\tau$. Therefore, we get from Lemma \ref{sequence}
\begin{align*}
(F_1\star_\theta F_2 )^*&= \lim_{N\to\infty}|2|_\k^{2d}\int_{\k^{2d}\times\k^{2d}}{\Psi}\big(2[Y,Z]\big)\,
e^{-\mu_0(Y)\mu_0(Z)/N}\,\big(\tau_{\theta Y} (F_1)\,\tau_Z(F_2)\big)^*
\,dY\,dZ\\
&=  \lim_{N\to\infty}|2|_\k^{2d} \int_{\k^{2d}\times\k^{2d}} \overline\Psi\big(2[Y,Z]\big)\,
e^{-\mu_0(\theta Y)\mu_0(\theta^{-1}Z)/N}\tau_{\theta Y}( F_2^* )\,\tau_{Z}(F_1^* )dY
\,dZ. 
\end{align*}
But from the same reasoning that the one given in Lemma \ref{sequence}, and
using Lemma \ref{DILL} (i) for the commutativity of $I_\theta$ and $J$,
one sees that the expression
above is exactly $F_2^*\star_\theta F_1^*$.
\end{proof}
\begin{lem}
\label{aut}
Let $\theta\in\CO_\k$. 
The action of $\k^{2d}$ by translation on $\CB(\k^{2d},A)$  is still an automorphism for
the deformed product $\star_\theta$.
\end{lem}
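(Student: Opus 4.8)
The plan is to show that for each fixed $Y_0\in\k^{2d}$, the translation operator $\tau_{Y_0}$ satisfies $\tau_{Y_0}(F_1\star_\theta F_2)=\tau_{Y_0}(F_1)\star_\theta\tau_{Y_0}(F_2)$ for all $F_1,F_2\in\CB(\k^{2d},A)$. Since $\tau_{Y_0}$ is a continuous linear bijection of $\CB(\k^{2d},A)$ with inverse $\tau_{-Y_0}$ (by \lref{lem2} and the continuity statements preceding \dref{B}), establishing this multiplicativity is exactly the assertion that it is an automorphism.

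The natural approach is to work directly from the defining formula \eqref{Star} for $\star_\theta$. The key point is that $\tau_{Y_0}$ commutes with the building blocks appearing in that formula: it commutes with $J$ and with $J_\theta$ (as these are convolution operators, hence translation-invariant — this is used repeatedly, e.g.\ in \lref{DILL} and \pref{DAA}), and it commutes with the translations $\tau_{\theta Y}$ and $\tau_Z$ since $\k^{2d}$ is abelian. First I would apply $\tau_{Y_0}$ to \eqref{Star}, pull it inside the absolutely convergent integral (legitimate by the uniform bounds established in the proof of \pref{prop1}), and use $\tau_{Y_0}\big(\tau_{\theta Y}(J_\theta^{2d+1}F_1)\,\tau_Z(J^{2d+1}F_2)\big)=\tau_{\theta Y}\big(J_\theta^{2d+1}\tau_{Y_0}F_1\big)\,\tau_Z\big(J^{2d+1}\tau_{Y_0}F_2\big)$, which holds because $\tau_{Y_0}$ is an algebra homomorphism of $C_b(\k^{2d},A)$ for the pointwise product and commutes with all the operators in sight. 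The right-hand side is then precisely the integrand for $\tau_{Y_0}(F_1)\star_\theta\tau_{Y_0}(F_2)$, so the two sides agree.

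Alternatively, and perhaps more cleanly, I would invoke \lref{sequence}: the approximants
$$
F_N=|2|_\k^{2d}\int_{\k^{2d}\times\k^{2d}}\overline\Psi\big(2[Y,Z]\big)\,\tau_{\theta Y}(F_1)\,\tau_Z(F_2)\,e^{-\mu_0(Y)\mu_0(Z)/N}\,dY\,dZ
$$
converge to $F_1\star_\theta F_2$ in $\CB(\k^{2d},A)$. Applying the continuous operator $\tau_{Y_0}$ and passing it under the integral, one gets $\tau_{Y_0}(F_N)=|2|_\k^{2d}\int\overline\Psi(2[Y,Z])\,\tau_{\theta Y}(\tau_{Y_0}F_1)\,\tau_Z(\tau_{Y_0}F_2)\,e^{-\mu_0(Y)\mu_0(Z)/N}\,dY\,dZ$, which is the $N$-th approximant for $\tau_{Y_0}(F_1)\star_\theta\tau_{Y_0}(F_2)$; taking $N\to\infty$ and using continuity of $\tau_{Y_0}$ gives the identity.

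I do not expect any serious obstacle here; the statement is essentially bookkeeping once one observes that every operator entering the definition of $\star_\theta$ commutes with translations. The only minor point requiring a word is the justification for moving $\tau_{Y_0}$ inside the oscillatory integral, but this is immediate from the absolute convergence (uniform in all seminorms) established in \pref{prop1} or \lref{sequence}. For completeness one should also record that, by \lref{lem2} together with \pref{DAA}, the restriction of $\tau_{Y_0}$ to $A_{\rm reg}$ (under the identification $\tilde\alpha$) is exactly $\alpha_{Y_0}$, so the same statement transfers to the deformed algebra $(A_{\rm reg},\star_\theta^\alpha)$ — giving that $\alpha$ acts by automorphisms of the deformed product, which is what will be needed later.
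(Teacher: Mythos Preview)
Your proposal is correct and follows essentially the same approach as the paper's own proof, which is a one-line argument appealing to the defining formula \eqref{Star} together with the continuity of $\tau$ and its commutation with $J$ on $\CB(\k^{2d},A)$. Your elaboration (including the alternative via \lref{sequence}) simply spells out the bookkeeping behind that one-liner.
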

\begin{proof}
This follows from the defining relation \eqref{Star} of $\star_\theta$ on $\CB(\k^{2d},A)$ together with the fact
that $\tau$ is  continuous  and commutes with $J$ on $\CB(\k^{2d},A)$. 
\end{proof}

\begin{dfn}
The deformed product of the Fr\'echet algebra $A_{\rm reg}$ is given by the map:
$$
\star_\theta^\alpha :A_{\rm reg}\times A_{\rm reg}\to A,\quad (a,b)\mapsto\tilde\alpha(a)
\star_\theta\tilde\alpha(b) (0),
$$
which by Remark \ref{morepower} can be rewritten as:
\begin{align}
\label{OM}
a\star_\theta^\alpha b= |2|_\k^{2d} \int \overline\Psi(2[X,Y])\,\mu_0^{-2d-1}
(X)\,\mu_0^{-2d-1}(Y)\,\big(J_\theta^{2d+1}\tilde\alpha(a)\big)(\theta X) \,\big(J^{2d+1}
\tilde\alpha(b)\big)(Y)\,dXdY.
\end{align}

\end{dfn}

We arrive to our first main result.

\begin{thm}
\label{TDF}
Let $\k$ be a non-Archimedean local field of characteristic different from $2$ and $\theta\in\CO_\k$. Let
also $A$ be a $C^*$-algebra endowed with a continuous action $\alpha$
of $\k^{2d}$. Then (keeping the notations displayed above) 
$(A_{\rm reg},\star_\theta^\alpha)$ is an associative  Fr\'echet algebra that we call the 
 Fr\'echet deformation of the $C^*$-algebra $A$. Moreover, 
 the original action $\alpha$ is still by automorphisms and the original involution 
 is still an involution.
\end{thm}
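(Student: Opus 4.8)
The plan is to deduce Theorem \ref{TDF} almost entirely by transport of structure from the results already established on $\CB(\k^{2d},A)$. The key point is that, by Proposition \ref{DAA}, the map $\tilde\alpha:A_{\rm reg}\to\CB(\k^{2d},A)$ is an isometric (for each seminorm) algebra embedding onto a \emph{closed} subspace, and that this subspace is stable under translations $\tau_X$. So the strategy is: first verify that $\tilde\alpha(A_{\rm reg})$ is stable under $\star_\theta$, then pull back associativity, the involution identity and the automorphism property from Propositions \ref{prop1}, Lemma \ref{invol} and Lemma \ref{aut}.

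First I would show that $a\star_\theta^\alpha b\in A_{\rm reg}$ whenever $a,b\in A_{\rm reg}$. By definition $a\star_\theta^\alpha b=\big(\tilde\alpha(a)\star_\theta\tilde\alpha(b)\big)(0)$, and Proposition \ref{prop1} already tells us $\tilde\alpha(a)\star_\theta\tilde\alpha(b)\in\CB(\k^{2d},A)$. What remains is the identification
$$
\tilde\alpha\big(a\star_\theta^\alpha b\big)=\tilde\alpha(a)\star_\theta\tilde\alpha(b),
$$
i.e.\ that $\tilde\alpha$ intertwines $\star_\theta^\alpha$ on $A_{\rm reg}$ with $\star_\theta$ on $\CB(\k^{2d},A)$. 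This is the covariance/equivariance computation: using that $\tilde\alpha(\alpha_X(a))=\tau_X(\tilde\alpha(a))$, that $\alpha_X$ is a $\ast$-homomorphism commuting with the $J$-type operators (which act only on the function variable), and the point-wise formula in Remark \ref{morepower}, one gets
$$
\big(\tilde\alpha(a)\star_\theta\tilde\alpha(b)\big)(X)=\alpha_X\big(a\star_\theta^\alpha b\big),
$$
which is exactly $\tilde\alpha(a\star_\theta^\alpha b)(X)$. Hence $\tilde\alpha(a\star_\theta^\alpha b)$ lies in $\CB(\k^{2d},A)$, so $a\star_\theta^\alpha b\in A_{\rm reg}$, and $\tilde\alpha$ is a continuous algebra morphism; the seminorm estimate $\|a\star_\theta^\alpha b\|_n^A\le C\,\|a\|_{k(n)}^A\,\|b\|_{k(n)}^A$ follows immediately from \eqref{IN1}.

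With the intertwining in hand, the rest is free. Associativity of $\star_\theta^\alpha$ on $A_{\rm reg}$ is inherited from associativity of $\star_\theta$ on $\CB(\k^{2d},A)$ (Proposition \ref{prop1}) via the injective morphism $\tilde\alpha$. The involution statement: since the $\ast$ on $A_{\rm reg}$ corresponds under $\tilde\alpha$ to $F\mapsto F^*$ on $\CB(\k^{2d},A)$ (because $\tilde\alpha(a^*)(X)=\alpha_X(a^*)=\alpha_X(a)^*=\tilde\alpha(a)(X)^*$, using that $\alpha_X$ is a $\ast$-automorphism), Lemma \ref{invol} gives $(a\star_\theta^\alpha b)^*=b^*\star_\theta^\alpha a^*$. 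That $\alpha$ still acts by automorphisms of $(A_{\rm reg},\star_\theta^\alpha)$ follows from Lemma \ref{aut} and $\tilde\alpha\circ\alpha_X=\tau_X\circ\tilde\alpha$: the translation $\tau_X$ is a $\star_\theta$-automorphism of $\CB(\k^{2d},A)$, hence $\alpha_X$ is a $\star_\theta^\alpha$-automorphism of $A_{\rm reg}$; combined with Proposition \ref{DAA} ($\alpha$ isometric for each $\|.\|_n^A$), continuity is automatic.

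The only genuine work, and the main obstacle, is the equivariance identity $\big(\tilde\alpha(a)\star_\theta\tilde\alpha(b)\big)(X)=\alpha_X(a\star_\theta^\alpha b)$. Everything else is formal. This identity requires care because $\star_\theta$ involves the dilation operator $J_\theta$ acting on the first factor, and one must check that translating the output variable $X$ matches conjugating by $\alpha_X$ throughout the oscillatory integral in \eqref{Star}. The clean way is to use the point-wise expression
$$
F_1\star_\theta F_2(X)=|2|_\k^{2d}\!\int\! \overline\Psi\big(2[Y,Z]\big)\mu_0^{-N}(Y)\mu_0^{-N}(Z)\big(J_\theta^{N}F_1\big)(X+\theta Y)\big(J^{N}F_2\big)(X+Z)\,dY\,dZ
$$
from Remark \ref{morepower} with $F_j=\tilde\alpha(a),\tilde\alpha(b)$, then pull $\alpha_X$ out of the integral (justified by the absolute convergence established in Proposition \ref{prop1} together with the isometry of $\alpha_X$ and continuity of $\alpha_X$), using $\big(J^N\tilde\alpha(b)\big)(X+Z)=\alpha_X\big((J^N\tilde\alpha(b))(Z)\big)$ and likewise for the dilated term with $J_\theta$ in place of $J$ — which is where Lemma \ref{DILL}(i), the commutation $[I_\theta,J]=0$, and the fact that $J_\theta$ still commutes with translations get used. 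This reduces the identity to the $X=0$ case, which is the definition of $a\star_\theta^\alpha b$.
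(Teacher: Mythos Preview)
Your proposal is correct and follows essentially the same route as the paper: both arguments hinge on the intertwining identity $\tilde\alpha(a\star_\theta^\alpha b)=\tilde\alpha(a)\star_\theta\tilde\alpha(b)$ (the paper's equation \eqref{OOO}), proved via the covariance relation $(J^N\tilde\alpha(a))(X+Y)=\alpha_X\big((J^N\tilde\alpha(a))(Y)\big)$, and then pull back associativity, the involution law and the automorphism property from Proposition \ref{prop1}, Lemma \ref{invol} and Lemma \ref{aut}. The only cosmetic difference is that the paper packages the covariance step through an auxiliary action $\hat\alpha_X(F)(Y):=\alpha_X(F(Y))$ satisfying $\hat\alpha_X\circ\tilde\alpha=\tau_X\circ\tilde\alpha$ and commuting with $J$, whereas you work directly with the point-wise integral formula from Remark \ref{morepower}.
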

\begin{proof}
By construction, the action $\alpha$ yields an isometric embedding of 
$A_{\rm reg}$ in  $\CB(\k^{2d},A)$,
 Proposition \ref{prop1} entails then that $\star_\theta:\CB(\k^{2d},A)\times\CB(\k^{2d},A)\to \CB(\k^{2d},A)$
 continuously. Note also that the evaluation map $\CB(\k^{2d},A)\to A$, $F\mapsto F(0)$ is continuous too. Hence,
$\star_\theta^\alpha:A_{\rm reg}\times A_{\rm reg}\to A$ is continuous
and   from the inequality  \eqref{IN1}, we deduce
$$
\|a\star_\theta^\alpha b\|_A\leq\|\mu_0^{-2d-1}\|_1^2\, \|a\|^A_{2d+1}\,\|b\|^A_{2d+1}.
$$
Next, we need to show that the map $\star_\theta^\alpha$ takes values in $A_{\rm reg}$ 
(and not only in $A$). To show this, let  $a\in A_{\rm reg}$ and $X,Y\in\k^{2d}$. Observe first that $\tau_X \circ\tilde\alpha(a)=
\tilde\alpha(\alpha_{X}(a))$.
Consider then the action $\hat\alpha$ of $\k^{2d}$ on $\CB(\k^{2d},A)$ given by 
$\big(\hat\alpha_X(F)\big)(Y)=\alpha_X\big(F(Y)\big)$. Then we have 
$\hat\alpha_X\big(\tilde\alpha(a)\big)=\tau_{X}\big(\tilde\alpha(a)\big)$. Since $\tau$ commutes with $J$,
we therefore get $\hat\alpha_X\big(J^n\tilde\alpha(a)\big)=\tau_{X}\big(J^n\tilde\alpha(a)\big)$,
 from which we 
easily deduce by \eqref{OM} that the map $\tilde\alpha$ intertwines $\star_\theta^\alpha$
and $\star_\theta$:
\begin{align}
\label{OOO}
\tilde\alpha(a\star_\theta^\alpha b)=\tilde\alpha(a)\star_\theta
\tilde\alpha(b)\,,\quad\forall a,b\in A_{\rm reg}.
\end{align}
Eq. \eqref{OOO} immediately implies that  $\star_\theta^\alpha$ takes values in $A_{\rm reg}$.
Moreover, it also implies  the associativity of $\star_\theta^\alpha$ on $A_{\rm reg}$ from the associativity of 
$\star_\theta$
on $\CB(\k^{2d},A)$:
$$
(a\star_\theta^\alpha b)\star_\theta^\alpha c=\tilde\alpha(a\star_\theta^\alpha b)\star_\theta 
\tilde\alpha(c)(0)=\big(\tilde\alpha(a)\star_\theta\tilde\alpha(b)\big)
\star_\theta\tilde\alpha(c)(0)\,,\quad\forall a,b,c\in A_{\rm reg}.
$$
Observe then that \eqref{OOO} (together with Lemma \ref{aut})
also implies that the action $\alpha$
on $A_{\rm reg}$ is still by automorphism of the deformed product:
\begin{align*}
\alpha_X(a\star_\theta^\alpha b)&=\tilde\alpha(a)\star_\theta\tilde\alpha(b)(X)
=\tau_{X}\big(\tilde\alpha(a)\star_\theta\tilde\alpha(b)\big)(0)
=\big(\tau_{X}\tilde\alpha(a)\star_\theta\tau_{X}\tilde\alpha(b)\big)(0)\\&
=\tilde\alpha(\alpha_X(a))\star_\theta\tilde\alpha(\alpha_X(b))(0)=
\alpha_X(a)\star_\theta^\alpha\alpha_X(b).
\end{align*}
Last, that the original involution is still an involution follows from Lemma \ref{invol}.
\end{proof}

\begin{rmk}
\label{C2}
Theorem \ref{TDF}  can be extended in two directions. Firstly, if $\k$ is of characteristic $2$,
then all the statements of this section (including  Theorem  \ref{TDF})
continue to hold true provided we redefine the function $\mu_0$ in \eqref{mu}, the symplectic Fourier
transform $\CG$ in \eqref{SFT} and the deformed product $\star_\theta$ in \eqref{Star} without the factor $2$.
However, and as indicated earlier, we then lose the contact with the pseudo-differential calculus
that we will intensively use in the next section in order to construct a $C^*$-norm on the deformed
Fr\'echet 
algebra $(A_{\rm reg},\star_\theta^\alpha)$. Secondly,  it is not difficult to extend Theorem  \ref{TDF} 
in the case of a Fr\'echet algebra $\CA$ (instead of a $C^*$-algebra $A$). If the topology of $\CA$ comes 
from a countable set of seminorms $\{\|.\|_j\}_{j\in\N}$, then we only need to require that it carries a 
 continuous action $\alpha$ of $\k^{2d}$ which is tempered in the sense that for all $j\in\N$,
there exist $C>0$ and $k,n\in\N$ such that for all $a\in\CA$, $\|\alpha_X(a)\|_j\leq C\mu_0(X)^{n}
\|a\|_k$.
But again, to construct a deformed $C^*$-norm we have to restrict ourselves to $C^*$-algebras and isometric
actions.
\end{rmk}

\begin{rmk}
By equivariance of the deformed product and from the discussion which follows \eqref{Ainfty},
we deduce that
$A^\infty$ is also stable under $\star_\theta^\alpha$. However it is not clear if we have continuity for the
topology of $A^\infty$.
\end{rmk}

\section{The $C^*$-deformation of a $C^*$-algebra}
\label{CDCA}
\subsection{The Wigner functions approach}
\label{1}
In this section,  we  assume that $\k$ is of characteristic different from
$2$ and that $\theta\in\CO_\k\setminus\{0\}$.
Also, we identify our $C^*$-algebra $A$ with a subalgebra of $\CB(\CH)$
for a separable Hilbert space $\CH$.

By analogy with the integral representation  \eqref{SF},
 we may define for  $f\in L^1(\k^{2d},A)$:
\begin{align}
\label{SFA}
\bO_\theta^A(f):=\big|\tfrac2\theta\big|^{d}_\k\int_{\k^{2d}} \Omega_\theta(X)\otimes f(X)\,dX.
\end{align}
The  map $\bO_\theta^A$ sends continuously $L^1(\k^{2d},A)$ to
$\CB(L^2(\k^d)\otimes \CH)$. Indeed, since 
$$
\|\Omega_\theta(X)\|_{\CB(L^2(\k^d))}=\|U_\theta(X)\Sigma U_\theta(X)^*\|_{\CB(L^2(\k^d))}=1\,,
\quad\forall X\in\K^{2d},
$$ 
we get
\begin{align}
\label{may}
\|\bO_\theta^A(f)\|_{\CB(L^2(\k^d)\otimes \CH)}
&\leq |\theta|^{-d}_\k\int_{\k^{2d}} \|\Omega_\theta(X)\otimes f(X)\|_{\CB(L^2(\k^d)\otimes_\CH)}\,dX\\
&\quad=
 |\theta|^{-d}_\k\int_{\k^{2d}} \| f(X)\|_A\,dX=
 |\theta|^{-d}_\k\,\|f\|_1.\nonumber
\end{align}
Since moreover $\Omega_\theta(X)$ is selfadjoint, we get $\bO_\theta^A(f)^*=\bO_\theta^A(f^*)$, where 
$f^*\in L^1(\k^{2d},A)$ is defined by $f^*(X):=f(X)^*$.
There is an obvious reason to introduce the map $\Omega_\theta^A$:
\begin{lem}
\label{mor}
 The map $\bO_\theta^A:\big(\CS(\k^{2d},A),\star_\theta\big)\to\CB\big(L^2(\k^d)\otimes\CH\big)$
is a continuous $*$-homomorphism.
\end{lem}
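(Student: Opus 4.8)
The statement to prove is that $\bO_\theta^A : (\CS(\k^{2d},A),\star_\theta) \to \CB(L^2(\k^d)\otimes\CH)$ is a continuous $*$-homomorphism. Continuity is already visible from the bound \eqref{may} together with the fact that $\CS(\k^{2d},A)$ embeds continuously into $L^1(\k^{2d},A)$ (the seminorms \eqref{SN} control any $L^p$-norm, in particular $L^1$, since $\mu_0^{-s}\in L^1$ for $s>2d$). The $*$-compatibility was already observed just above the lemma: $\bO_\theta^A(f)^*=\bO_\theta^A(f^*)$, because each $\Omega_\theta(X)$ is selfadjoint. So the only real content is multiplicativity: $\bO_\theta^A(f_1\star_\theta f_2)=\bO_\theta^A(f_1)\,\bO_\theta^A(f_2)$ for $f_1,f_2\in\CS(\k^{2d},A)$.

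First I would reduce to the scalar case. For $f_j=g_j\otimes a_j$ with $g_j\in\CS(\k^{2d})$ and $a_j\in A$, we have $f_1\star_\theta f_2=(g_1\star_\theta g_2)\otimes(a_1a_2)$ by the very definition of $\star_\theta$ on the tensor product, and $\bO_\theta^A(g\otimes a)=\bO_\theta(g)\otimes a$ directly from \eqref{SFA}. Hence multiplicativity on the algebraic tensor product $\CS(\k^{2d})\otimes A$ follows from the scalar identity $\bO_\theta(g_1\star_\theta g_2)=\bO_\theta(g_1)\bO_\theta(g_2)$, which is exactly the defining property of $\star_\theta$ in the scalar case (it is how $\star_\theta$ was introduced, via transport of the operator product through the unitary $\bO_\theta$, see \eqref{M} and the lines preceding it). Then I would pass to all of $\CS(\k^{2d},A)$ by density: $\CS(\k^{2d})\otimes A$ is dense in $\CS(\k^{2d},A)=\CS(\k^{2d})\widehat\otimes A$, the product $\star_\theta$ is jointly continuous on $\CS(\k^{2d},A)$ (it restricts from the continuous product on $\CB(\k^{2d},A)$ of Proposition \ref{prop1}, and $\CS(\k^{2d},A)$ is a closed-under-multiplication subspace by the ideal property Lemma \ref{SIB}; alternatively one checks the analogue of \eqref{IN1} directly for the $\CS$-seminorms), the map $\bO_\theta^A$ is continuous from $\CS(\k^{2d},A)$ into $\CB(L^2(\k^d)\otimes\CH)$, and composition of bounded operators is jointly continuous for the operator norm. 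Approximating $f_1,f_2$ by elements of $\CS(\k^{2d})\otimes A$ and using these three continuity facts, the identity $\bO_\theta^A(f_1\star_\theta f_2)=\bO_\theta^A(f_1)\bO_\theta^A(f_2)$ extends from the dense subalgebra to all of $\CS(\k^{2d},A)$.

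Alternatively, and perhaps cleaner to write, one can prove multiplicativity by a direct computation: insert the integral form of $\star_\theta$ on $\CS(\k^{2d},A)$ from Remark \ref{morepower} (the "undone oscillatory trick" expression, with no regularizing powers of $\mu_0$) into \eqref{SFA}, use the covariance relation $\Omega_\theta(X)=U_\theta(X)\Sigma U_\theta(X)^*$ together with $\Sigma U_\theta(Y)\Sigma=U_\theta(-Y)$ and the projective multiplier of $U_\theta$, and recognize the resulting oscillatory integral over the extra variables as collapsing (by Fourier inversion / the Plancherel identity on $\k^{2d}$, using the character $\Psi$) to $\bO_\theta^A(f_1)\bO_\theta^A(f_2)$. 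This is precisely the $p$-adic analogue of the standard verification that the Moyal product intertwines with operator composition under Weyl quantization; it is bookkeeping with Gaussian-free oscillatory integrals and is legitimate here because for Schwartz-Bruhat symbols everything converges absolutely (or is a genuine, non-distributional, oscillatory integral in Bruhat's sense). I would be careful that this direct route only works on $\CS(\k^{2d},A)$ and not on all of $\CB(\k^{2d},A)$, which is exactly why the lemma is stated at the Schwartz level.

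\textbf{Main obstacle.} The genuinely delicate point is not the algebra but the analysis of the reduction step: making sure that $\star_\theta$ is \emph{jointly} continuous on $\CS(\k^{2d},A)$ with respect to its own Fr\'echet topology (the estimates in the excerpt are phrased for $\CB$-seminorms or for the mixed $\mathfrak P^A_{m,n}$ of Lemma \ref{SIB}, and one must confirm these give joint continuity in the $\CS$-topology), and that $\bO_\theta^A$ is continuous from $\CS(\k^{2d},A)$ — which reduces to controlling $\|\cdot\|_1$ by a single $\CS$-seminorm. Both are routine given Lemma \ref{SIB}, \eqref{may}, and the $L^p$-interchangeability of the seminorms \eqref{SN0}, so the proof is short; I would simply cite those and invoke density, or else write out the direct oscillatory-integral computation if a self-contained argument is preferred.
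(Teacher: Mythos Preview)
Your proposal is correct and follows essentially the same route as the paper: reduce multiplicativity to the scalar case (where it is the defining property of $\star_\theta$), lift to the algebraic tensor product $\CS(\k^{2d})\otimes A$, and then pass to the completion by density using continuity of $\bO_\theta^A$ and of $\star_\theta$. The paper writes the density step as an explicit telescoping estimate, citing \eqref{IN1}, \eqref{may} and $\|f\|_1\leq \|\mu_0^{-2d-1}\|_1\,\mathfrak P^A_{2d+1,0}(f)$, which amounts to exactly the joint-continuity facts you identify (and your pointer to Lemma \ref{SIB} for the $\CS$-seminorm control of $\star_\theta$ is the right one).
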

\begin{proof}
That $\bO_\theta^A$ is continuous and involution preserving has already been proved.
That $\bO_\theta^A$ is a homomorphism when $A=\C$ follows by construction of the $\star_\theta$.
 Hence, $\bO_\theta^A$ is still a homomorphism
 at the level of the algebraic tensor product $\CS(\k^{2d})\otimes A$. 
For $j=1,2$, take $f_j\in\CS(\k^{2d},A)$ and choose $(f_{j,k})_{k\in\N}\subset\CS(\k^{2d})\otimes A$
converging to $f_j$ in the topology of $\CS(\k^{2d},A)$. 
Then, we have in $\CB(L^2(\k^d)\otimes \CH)$:
\begin{align*}
\|\bO_\theta^A(f_1)\bO_\theta^A(f_2)-\bO_\theta^A(f_1\star_\theta f_2)\|&\leq \|\bO_\theta^A(f_1-f_{1,k})\bO_\theta^A(f_2)\|+
\|\bO_\theta^A(f_{1,k})\bO_\theta^A(f_2-f_{2,k})\|\\&+\|\bO_\theta^A((f_1-f_{1,k})\star_\theta f_2)\|+
\|\bO_\theta^A(f_{1,k}\star_\theta (f_2-f_{2,k}))\|,
\end{align*}
which by the estimates \eqref{IN1}, \eqref{may} and $\|f\|_1\leq \|\mu_0^{-2d-1}\|_1\,
\mathfrak P^A_{2d+1,0}(f)  $, may be rendered as small as wished 
by choosing $k\in\N$ large enough. Hence $\|\bO_\theta^A(f_1)\bO_\theta^A(f_2)-\bO_\theta^A(f_1\star_\theta f_2)\|=0$
and thus $\bO_\theta^A(f_1)\bO_\theta^A(f_2)=\bO_\theta^A(f_1\star_\theta f_2)$.
\end{proof}

Let now  $\eta$ be the characteristic function of $\CO_\k^d$, normalized by $\|\eta\|_2 = 1$ and
let also $W^\theta_{X,Y}$ the Wigner function associated with the pair $(\eta^\theta_X,\eta^\theta_Y)$ of
coherent states as in \eqref{Wigner2}. For $F\in\CB(\k^{2d},A)$, we can then define the 
following $A$-valued  function on $\k^{2d}\times\k^{2d}$:
\begin{align}
\label{W(F)}
W_{X,Y}^{\theta,A}(F):=\big|\tfrac2\theta\big|^{d}_\k\int_{\k^{2d}} W^\theta_{X,Y}(Z)\,F(Z)\,dZ.
\end{align}
\begin{lem}
\label{W(F)L}
Let $F\in\CB(\k^{2d},A)$. Then for all $X,Y\in\k^{2d}$ and all $n\in\N$, we have:
$$
\|W^{\theta,A}_{X,Y}(F)\|_A\leq \mu_0^{-n} \big(\tfrac1{2\theta}(X -Y )\big)\,\mathfrak P^A_n(F).
$$
If moreover $f\in\CS(\k^{2d},A)$ then for all $m,n\in\N$ we have
$$
\|W^{\theta,A}_{X,Y}(f)\|_A\leq  \mu_0^{-n} \big(\tfrac12(X+Y)\big)
 \mu_0^{-m} \big(\tfrac1{2\theta}(X-Y)\big) \,
\mathfrak P^A_{n,m}(f).
$$
\end{lem}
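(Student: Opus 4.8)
The plan is to read the integral \eqref{W(F)} as a duality pairing and then to transfer powers of $J$ (and of $I$) from the compactly supported Wigner function $W^\theta_{X,Y}$ onto the symbol, using the eigenvalue identities of \lref{Bech}.

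First I would record that $W^\theta_{X,Y}$ is a compactly supported element of $\CS(\k^{2d})$: by \lref{Bech} its modulus equals $|2|_\k^d$ times the characteristic function of the compact open set $\{Z\in\k^{2d}:Z_\theta\in\tfrac12(X_\theta+Y_\theta)+(\tfrac12\CO_\k)^d\times(\tfrac12\CO_\k^o)^d\}$. In particular $W^\theta_{X,Y}\in L^1(\k^{2d})$, and the change of variables $Z\mapsto Z_\theta$ (whose module is $|\theta|_\k^{-d}$) together with the normalisation ${\rm Vol}(\CO_\k)\,{\rm Vol}(\CO_\k^o)=1$ gives $\|W^\theta_{X,Y}\|_1=|2|_\k^d\,|\theta|_\k^d\,{\rm Vol}\big((\tfrac12\CO_\k)^d\times(\tfrac12\CO_\k^o)^d\big)=|2|_\k^d\,|\theta|_\k^d\,|2|_\k^{-2d}=|\tfrac\theta2|_\k^d$. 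Hence $W^{\theta,A}_{X,Y}(F)$ is a well-defined element of $A$ for any $F\in\CS'(\k^{2d},A)$, equal to $|\tfrac2\theta|_\k^d$ times the value at $W^\theta_{X,Y}$ of $F$ regarded as a continuous $A$-valued functional on $\CS(\k^{2d})$.

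For the first estimate, I would use that $J^n$, the convolution operator by $\CG(\mu_0^n)$, is a continuous \emph{symmetric} operator on $\CS(\k^{2d})$ and on $\CS'(\k^{2d},A)$: it is supported in $(\tfrac12\CO_\k)^d\times(\tfrac12\CO_\k^o)^d$ by \lref{lem1}(i), hence continuous, and it is symmetric because $\mu_0$ is even (so $\CG(\mu_0^n)$ is even) and $J^t=\CG I\CG=J$. Therefore the pairing defining $W^{\theta,A}_{X,Y}(F)$ is unaffected if $W^\theta_{X,Y}$ is replaced by $J^{-n}W^\theta_{X,Y}$ and $F$ by $J^nF$. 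By \lref{Bech} with the $I$-exponent set to $0$, $J^{-n}W^\theta_{X,Y}=\mu_0^{-n}\big(\tfrac1{2\theta}(X-Y)\big)\,W^\theta_{X,Y}$; and since $J^nF\in L^\infty(\k^{2d},A)$ by the very definition of $\CB(\k^{2d},A)$, the remaining pairing is the honest Bochner integral $\int_{\k^{2d}}W^\theta_{X,Y}(Z)\,(J^nF)(Z)\,dZ$, whose $A$-norm is at most $\mathfrak P^A_n(F)\,\|W^\theta_{X,Y}\|_1$. Multiplying by the prefactor $|\tfrac2\theta|_\k^d$, the two powers of $|\theta/2|_\k$ cancel and one obtains $\|W^{\theta,A}_{X,Y}(F)\|_A\le\mu_0^{-n}\big(\tfrac1{2\theta}(X-Y)\big)\,\mathfrak P^A_n(F)$.

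For the sharper estimate, valid when $f\in\CS(\k^{2d},A)$, the multiplication operator $I^n$ (by $\mu_0^n$) is also symmetric and continuous on $\CS(\k^{2d})$, on $\CS'(\k^{2d},A)$ and on $\CS(\k^{2d},A)$, so it may be transferred as well: the pairing is unchanged on replacing $W^\theta_{X,Y}$ by $I^{-n}J^{-m}W^\theta_{X,Y}$ and $f$ by $I^nJ^mf$. Now \lref{Bech} gives $I^{-n}J^{-m}W^\theta_{X,Y}=\mu_0^{-n}\big(\tfrac12(X+Y)\big)\,\mu_0^{-m}\big(\tfrac1{2\theta}(X-Y)\big)\,W^\theta_{X,Y}$, while $\|(I^nJ^mf)(Z)\|_A\le\mathfrak P^A_{n,m}(f)$ uniformly in $Z$; integrating against $W^\theta_{X,Y}$ and cancelling the constants exactly as before yields the second inequality. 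The argument is short; the only points deserving care are that $I^{\pm n}$ and $J^{\pm n}$ genuinely are symmetric operators on the spaces involved (so the transpositions across the pairing are legitimate) and the bookkeeping of the constant $\|W^\theta_{X,Y}\|_1=|\theta/2|_\k^d$, which the normalising factor in \eqref{W(F)} is precisely designed to absorb; I do not expect a real obstacle.
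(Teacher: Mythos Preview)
Your proposal is correct and follows exactly the same route as the paper: transfer $J^{-n}$ (resp.\ $I^{-n}J^{-m}$) onto $W^\theta_{X,Y}$ by symmetry, invoke the eigenvalue identities of \lref{Bech}, and estimate the remaining Bochner integral. The paper's own proof is terser and leaves the computation $\|W^\theta_{X,Y}\|_1=|\theta/2|_\k^d$ (and its cancellation with the prefactor $|\tfrac2\theta|_\k^d$) implicit, whereas you spell it out; otherwise the arguments coincide.
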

\begin{proof}
Note that if $F\in\CB(\k^{2d},A)$, we have
$$
W_{X,Y}^{\theta,A}(F)=\big|\tfrac2\theta\big|^{d}_\k\int_{\k^{2d}} \big(J^{-n}W^\theta_{X,Y}\big)(Z)\,\big(J^nf\big)(Z)\,dZ,\quad\forall n\in\N,
$$
and if $f\in\CS(\k^{2d},A)$, we have
$$
W_{X,Y}^{\theta,A}(f)=\big|\tfrac2\theta\big|^{d}_\k\int_{\k^{2d}} \big(I^{-n}J^{-m}W^\theta_{X,Y}\big)(Z)\,\big(I^nJ^mf\big)(Z)\,dZ,\quad\forall m,n\in\N.
$$
The result follows immediately from Lemma \ref{Bech}.
\end{proof}

Now, for $\phi,\psi\in L^2(\k^d)$, we denote by $|\phi\rangle\langle\psi|$,
the rank one operator $L^2(\k^d)\to L^2(\k^d)$, $\rho\mapsto\langle \psi,\rho\rangle\phi$.
\begin{dfn} 
For $F\in\CB(\k^{2d},A)$, define in the weak sense in $L^2(\k^d)\otimes\CH$:
$$
{\bf W}_\theta^A(F):=|\theta|^{-2d}_\k\int_{\k^{2d}\times\k^{2d}} |\eta^\theta_Y\rangle\langle \eta^\theta_X|\otimes W^{\theta,A}_{X,Y}(F)\,dXdY.
$$ 
\end{dfn}
The following  may be thought as a variant of the Calderon-Vaillancourt Theorem for 
our $A$-valued Weyl pseudo-differential calculus on local fields. This result
for $A=\C$ is due to Bechata  \cite{Bechata}.
\begin{prop}
\label{CV}
The quadratic form associated with the weak integral operator ${\bf W}_\theta^A(F)$ defines an
element of $\CB(L^2(\k^d)\otimes\CH)$, with
$$
\|{\bf W}_\theta^A(F)\|_{\CB(L^2(\k^d)\otimes\CH)}\leq \|\mu_0^{-2d-1}\|_1\, \mathfrak P^A_{2d+1}(F).
$$
Moreover, we have ${\bf W}_\theta^A(F)^*={\bf W}_\theta^A(F^*)$, where $F^*\in\CB(\k^{2d},A)$ is defined
by $F^*(X):=F(X)^*$.
\end{prop}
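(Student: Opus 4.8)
The plan is to treat ${\bf W}_\theta^A(F)$ as the sesquilinear form it is defined to be, to bound this form by the right quantity, and then to invoke the standard fact that a bounded sesquilinear form on a Hilbert space is implemented by a bounded operator of the same norm, thereby producing the element of $\CB(L^2(\k^d)\otimes\CH)$. The essential point --- exactly as in Bechata's scalar treatment \cite{Bechata} --- is that one \emph{cannot} estimate by integrating the operator norm $\big\|\,|\eta^\theta_Y\rangle\langle\eta^\theta_X|\otimes W^{\theta,A}_{X,Y}(F)\big\|=\|W^{\theta,A}_{X,Y}(F)\|_A$ over $\k^{2d}\times\k^{2d}$, since Lemma \ref{W(F)L} only bounds this by $\mathfrak P^A_{2d+1}(F)$ times a function of $X-Y$, which is not integrable in the complementary directions; instead one must keep the rank-one operators $|\eta^\theta_Y\rangle\langle\eta^\theta_X|$ and exploit the near orthogonality of the coherent states encoded in the reproducing identity \eqref{RI}.

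First I would unwind the tensor structure. Writing $L^2(\k^d)\otimes\CH\cong L^2(\k^d;\CH)$ and, for $v$ in this space, $c_X(v):=\int_{\k^d}\overline{\eta^\theta_X(y)}\,v(y)\,dy\in\CH$, a direct computation gives, for all $u,v$,
\[
\big\langle u,\big(|\eta^\theta_Y\rangle\langle\eta^\theta_X|\otimes W^{\theta,A}_{X,Y}(F)\big)v\big\rangle=\big\langle c_Y(u),\,W^{\theta,A}_{X,Y}(F)\,c_X(v)\big\rangle_\CH,
\]
hence $\langle u,{\bf W}_\theta^A(F)v\rangle=|\theta|_\k^{-2d}\int_{\k^{2d}\times\k^{2d}}\langle c_Y(u),W^{\theta,A}_{X,Y}(F)c_X(v)\rangle_\CH\,dX\,dY$. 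Applying the reproducing formula \eqref{RI} componentwise over an orthonormal basis of $\CH$ yields $\int_{\k^{2d}}\|c_X(v)\|_\CH^2\,dX=|\theta|_\k^d\,\|v\|^2$; in particular $g_v:=[X\mapsto\|c_X(v)\|_\CH]$ lies in $L^2(\k^{2d})$ with $\|g_v\|_2^2=|\theta|_\k^d\|v\|^2$.

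Now I would insert the decay estimate of Lemma \ref{W(F)L} with $n=2d+1$, namely $\|W^{\theta,A}_{X,Y}(F)\|_A\le \mu_0^{-2d-1}\big(\tfrac1{2\theta}(X-Y)\big)\,\mathfrak P^A_{2d+1}(F)$. With $k_\theta(Z):=\mu_0^{-2d-1}\big(\tfrac1{2\theta}Z\big)$ the integral above is dominated by $\mathfrak P^A_{2d+1}(F)\int\!\!\int g_u(Y)\,k_\theta(X-Y)\,g_v(X)\,dX\,dY$, which by Cauchy--Schwarz and Young's inequality is $\le\mathfrak P^A_{2d+1}(F)\,\|k_\theta\|_1\,\|g_u\|_2\,\|g_v\|_2$. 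The crucial integrability fact is $\mu_0^{-2d-1}\in L^1(\k^{2d})$ (valid since $2d+1>2d$), whence $\|k_\theta\|_1=|2\theta|_\k^{2d}\|\mu_0^{-2d-1}\|_1$ by a change of variables. Collecting the prefactor $|\theta|_\k^{-2d}$ with $|2\theta|_\k^{2d}$ and $\|g_u\|_2\|g_v\|_2=|\theta|_\k^d\|u\|\|v\|$ gives $\big|\langle u,{\bf W}_\theta^A(F)v\rangle\big|\le|2|_\k^{2d}|\theta|_\k^d\,\|\mu_0^{-2d-1}\|_1\,\mathfrak P^A_{2d+1}(F)\,\|u\|\|v\|$, and since $|2|_\k\le1$ and $|\theta|_\k\le1$ (as $\theta\in\CO_\k$) the prefactor is $\le\|\mu_0^{-2d-1}\|_1$, the claimed bound; the same computation shows the defining double integral is absolutely convergent, so the form is well defined. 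For the identity ${\bf W}_\theta^A(F)^*={\bf W}_\theta^A(F^*)$ I would use $\overline{W^\theta_{X,Y}(Z)}=W^\theta_{Y,X}(Z)$ (immediate from $W^\theta_{X,Y}(Z)=\langle\eta^\theta_X,\Omega_\theta(Z)\eta^\theta_Y\rangle$ and selfadjointness of $\Omega_\theta(Z)$), which gives $W^{\theta,A}_{X,Y}(F)^*=W^{\theta,A}_{Y,X}(F^*)$; combined with $(|\eta^\theta_Y\rangle\langle\eta^\theta_X|)^*=|\eta^\theta_X\rangle\langle\eta^\theta_Y|$, taking adjoints inside the weak integral and relabelling $X\leftrightarrow Y$ yields $\langle u,{\bf W}_\theta^A(F)^*v\rangle=\langle u,{\bf W}_\theta^A(F^*)v\rangle$ for all $u,v$.

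Since the genuinely hard harmonic analysis is already done in Lemmas \ref{Bech} and \ref{W(F)L}, the main obstacle here is conceptual rather than computational: one must recognize that the estimate has to be performed on the quadratic form and not on the operator-norm integrand, and then see the resulting double integral as a convolution estimate --- with $\mu_0^{-2d-1}\in L^1$ supplying the decay in the $X-Y$ direction and the reproducing identity \eqref{RI} controlling the $L^2$-masses of the coherent-state coefficient maps $X\mapsto c_X(v)$ along the complementary directions. The remaining points --- justifying the interchanges of integration in the weak integral, and the componentwise passage from the scalar reproducing formula to $\CH$-valued vectors --- are routine. This is the $A$-valued counterpart of the argument Bechata gives for $A=\C$.
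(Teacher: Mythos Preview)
Your proof is correct and follows essentially the same approach as the paper's: both express the sesquilinear form via the coherent-state coefficients $c_X(v)=\langle\eta^\theta_X,v\rangle_{L^2(\k^d)}$, control their $L^2$-mass by the reproducing identity \eqref{RI}, and use Lemma \ref{W(F)L} for the decay in $X-Y$. The only cosmetic difference is that you phrase the resulting double-integral bound as a Young convolution inequality while the paper writes it as a Schur-test estimate --- since the kernel depends only on $X-Y$, these coincide.
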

\begin{proof}
For $\Phi\in L^2(\k^d)\otimes\CH$ and $\vf\in L^2(\k^d)$ we denote by 
$\langle \vf,\Phi\rangle_{L^2(\k^d)}$ the vector in $\CH$ defined by
$\langle \langle\vf, \Phi\rangle_{L^2(\k^d)},\rho\rangle_\CH:
=\langle\Phi,\vf\otimes\rho\rangle_{L^2(\k^d)\otimes\CH}$
for every $\rho\in\CH$. With this in mind and with $\eta$  the characteristic function of $(\CO_\k)^d$ 
(normalized by $\|\eta\|_2 = 1$), it is not difficult to see that the resolution of the identity 
\eqref{RI} on $L^2(\k^d)$ entails:
\begin{align}
\label{RI2}
\|\Phi\|^2_{L^2(\k^d)\otimes\CH}
=|\theta|_\k^{-d}\int_{\k^{2d}} \|\langle \eta^\theta_X,\Phi\rangle_{L^2(\k^d)}\|_\CH^2\,dX\,,
\quad \forall \Phi\in L^2(\k^d)\otimes\CH.
\end{align}
Take now $\Phi_1,\Phi_2\in L^2(\k^d)\otimes\CH$. We therefore get
\begin{align*}
\big|\big\langle \Phi_1,{\bf W}_\theta^A(F)\Phi_2\big\rangle_{L^2(\k^d)\otimes\CH}\big|\leq |\theta|^{-3d}_\k
\int_{\k^{2d}\times\k^{2d}} \|\langle \eta^\theta_Y,\Phi_1\rangle_{L^2(\k^d)}\|_\CH\,
\|\langle \eta^\theta_X,\Phi_2\rangle_{L^2(\k^d)}\|_\CH\,\|W^A_{X,Y}(F)\|\,dX\,dY.
\end{align*}
By the Cauchy-Schwarz inequality and \eqref{RI2}, we deduce
that the integral above is bounded by
$$
 |\theta|^{-2d}_\k
\|\Phi_1\|_{L^2(\k^d)\otimes\CH}\|\Phi_2\|_{L^2(\k^d)\otimes\CH}
 \Big(\sup_{X\in\k^{2d}}\int_{\k^{2d}}\|W^{\theta,A}_{X,Y}(F)\|_A \,dY \Big)^{1/2}
\Big(\sup_{Y\in\k^{2d}}\int_{\k^{2d}}\|W^{\theta,A}_{X,Y}(F)\|_A \,dX \Big)^{1/2},
$$
Hence, by Lemma \ref{W(F)L} we get
$$
\big|\big\langle \Phi_1,{\bf W}_\theta^{\theta,A}(F)\Phi_2\big\rangle_{L^2(\k^d)\otimes\CH}\big|\leq
\|\Phi_1\|_{L^2(\k^d)\otimes\CH}\|\Phi_2\|_{L^2(\k^d)\otimes\CH}\, \|\mu_0^{-2d-1}\|_1\,
\mathfrak P^A_{2d+1}(F),
$$
which completes the proof.
\end{proof}
\begin{rmk}
Observe that the bound of the norm of ${\bf W}_\theta^{\theta,A}(F)$ we have obtained, is independent of 
parameter $\theta$. 
\end{rmk}
\begin{cor}
\label{HW}
The map ${\bf W}_\theta^A:\big(\CB(\k^{2d},A),\star_\theta\big)\to\CB\big(L^2(\k^d)\otimes\CH\big)$
is a continuous $*$-homomorphism which extends $\bO_\theta^A:\big(\CS(\k^{2d},A),\star_\theta\big)\to
\CB\big(L^2(\k^d)\otimes\CH\big)$.
\end{cor}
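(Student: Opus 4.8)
The plan is to reduce the entire statement to the scalar case $A=\C$, where ${\bf W}_\theta^\C$ \emph{is} Bechata's Weyl pseudo-differential calculus on $\k^{2d}$ and all three assertions are known, and then to transfer it to an arbitrary $C^*$-algebra $A$ by density. The only structurally delicate point is the choice of dense subspace: it must be the algebraic tensor product $\CB(\k^{2d})\otimes A$, which is dense in $\CB(\k^{2d},A)$ (established in the proof of Lemma~\ref{lem2}), and \emph{not} $\CS(\k^{2d},A)$, which is a proper, non-dense subalgebra of $\CB(\k^{2d},A)$ --- already a nonzero constant function fails to be a $\mathfrak P^A_0$-limit of Schwartz-Bruhat functions. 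As a preliminary I would record three elementary tensor compatibilities, all immediate from the fact that $I,J,I_\theta,J_\theta,\CG$ and all translations act on $\CS'(\k^{2d},A)$ as (scalar operator)$\,\otimes\,\Id$ and that scalars factor out of pointwise products: namely ${\bf W}_\theta^A(G\otimes a)={\bf W}_\theta^\C(G)\otimes a$ (from \eqref{W(F)} and the definition of ${\bf W}_\theta^A$), $\bO_\theta^A(g\otimes a)=\bO_\theta^\C(g)\otimes a$ (from \eqref{SFA}), and $(G_1\otimes a_1)\star_\theta(G_2\otimes a_2)=(G_1\star_\theta G_2)\otimes a_1a_2$ (from \eqref{Star}).

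\emph{${\bf W}_\theta^A$ extends $\bO_\theta^A$.} For $A=\C$ the operator ${\bf W}_\theta^\C(F)$ is, by construction, Bechata's reformulation of the Weyl operator through coherent states and Wigner functions (this is Proposition~\ref{CV} for $A=\C$), and for $F\in\CS(\k^{2d})\subset L^1(\k^{2d})$ it coincides with the absolutely convergent integral \eqref{SF}; hence ${\bf W}_\theta^\C=\bO_\theta^\C$ on $\CS(\k^{2d})$. By the tensor compatibilities this gives ${\bf W}_\theta^A=\bO_\theta^A$ on $\CS(\k^{2d})\otimes A$. Both maps are continuous on $\CS(\k^{2d},A)$ --- $\bO_\theta^A$ by \eqref{may} together with $\|f\|_1\le\|\mu_0^{-2d-1}\|_1\,\mathfrak P^A_{2d+1,0}(f)$, and ${\bf W}_\theta^A$ by Proposition~\ref{CV} via the continuous inclusion $\CS(\k^{2d},A)\hookrightarrow\CB(\k^{2d},A)$ --- so they agree on the completion $\CS(\k^{2d},A)=\CS(\k^{2d})\widehat\otimes A$.

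\emph{${\bf W}_\theta^A$ is a $*$-homomorphism.} The $*$-property ${\bf W}_\theta^A(F)^*={\bf W}_\theta^A(F^*)$ is already contained in Proposition~\ref{CV}, so only multiplicativity is at stake. For $A=\C$, ${\bf W}_\theta^\C=\bO_\theta^\C:(\CB(\k^{2d}),\star_\theta)\to\CB(L^2(\k^d))$ is a homomorphism by \cite[Th\'eor\`eme 3.3]{Bechata} (the fact underlying the associativity of $\star_\theta$ in Proposition~\ref{prop1}). Combined with the tensor compatibilities, this yields ${\bf W}_\theta^A(F_1\star_\theta F_2)={\bf W}_\theta^A(F_1)\,{\bf W}_\theta^A(F_2)$ for all $F_1,F_2$ in $\CB(\k^{2d})\otimes A$. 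Now both $(F_1,F_2)\mapsto{\bf W}_\theta^A(F_1\star_\theta F_2)$ and $(F_1,F_2)\mapsto{\bf W}_\theta^A(F_1)\,{\bf W}_\theta^A(F_2)$ are continuous bilinear maps $\CB(\k^{2d},A)\times\CB(\k^{2d},A)\to\CB(L^2(\k^d)\otimes\CH)$ --- the first because $\star_\theta$ is continuous on $\CB(\k^{2d},A)$ (Proposition~\ref{prop1}) and ${\bf W}_\theta^A$ is continuous (Proposition~\ref{CV}), the second because ${\bf W}_\theta^A$ is continuous and operator multiplication is jointly norm-continuous. Since $\CB(\k^{2d})\otimes A$ is dense in $\CB(\k^{2d},A)$, the two maps coincide on all of $\CB(\k^{2d},A)\times\CB(\k^{2d},A)$, which is multiplicativity.

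I expect the only real obstacle to be the one just flagged: $\CS(\k^{2d},A)$ is \emph{not} dense in $\CB(\k^{2d},A)$, so one cannot prove multiplicativity merely on Schwartz-Bruhat symbols and extend by continuity. Consequently the scalar input one relies on must be Bechata's composition theorem on the full symbol class $\CB(\k^{2d})$, and the density transfer to general $A$ must be run through $\CB(\k^{2d})\otimes A$. (If one preferred to avoid invoking the density of $\CB(\k^{2d})\otimes A$, the alternative is to re-run Bechata's computation of the composition law directly with $A$-valued symbols, using the $A$-valued Wigner-function estimates of Lemma~\ref{W(F)L} and the kernel identities of Lemma~\ref{Bech}; but this is longer and no more illuminating.) Everything else is bookkeeping with the tensor-product structure and the continuity bounds already proved.
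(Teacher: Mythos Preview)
Your proposal is correct and follows essentially the same route as the paper's own proof: reduce to $A=\C$ where both the extension property ${\bf W}_\theta^\C|_{\CS(\k^{2d})}=\bO_\theta^\C$ and multiplicativity on $\CB(\k^{2d})$ are implicit in Bechata's work, pass to the algebraic tensor products $\CS(\k^{2d})\otimes A$ and $\CB(\k^{2d})\otimes A$ via the obvious tensor compatibilities, and then close up by density and the continuity bounds of Proposition~\ref{CV} and \eqref{IN1}. The paper phrases the last step as ``the same methods as those used in Lemma~\ref{mor}'' (an $\varepsilon$-argument), whereas you phrase it as equality of two jointly continuous bilinear maps on a dense subspace; these are equivalent, and your explicit warning that one must use $\CB(\k^{2d})\otimes A$ rather than $\CS(\k^{2d},A)$ for the multiplicativity step is exactly the point the paper leaves implicit.
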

\begin{proof}
By Proposition \ref{CV}, ${\bf W}_\theta^A$ is continuous and involution preserving. 
When $A=\C$, the relations ${\bf W}_\theta^A(F_1){\bf W}_\theta^A(F_2)={\bf W}_\theta^A(F_1\star_\theta F_2)$
and  $\bO_\theta^A(f)={\bf W}_\theta^A(f)$, for 
 $f\in\CS(\k^{2d})$ and $F,F_1,F_2\in\CB(\k^{2d})$
are implicit  in the work of Bechata \cite{Bechata} (they are almost
tautological). Obviously, these relations are still valid at the level of algebraic tensor products. 
The general case follows from
the same methods as those used in Lemma \ref{mor}, using the estimates \eqref{IN1},
\eqref{may} and Proposition \ref{CV}.
\end{proof}
\begin{rmk}
Since $\CS(\k^{2d},A)$ is an ideal in $\CB(\k^{2d},A)$, we deduce from  Corollary \ref{HW} that
$\bO_\theta^A(f\star_\theta F)=\bO_\theta^A(f){\bf W}_\theta^A(F)$ whenever  
$f\in\CS(\k^{2d},A)$ and $F\in\CB(\k^{2d},A)$. 
\end{rmk}

We are now able to state the main result of this section, whose proof is an immediate consequence
of Corollary \ref{HW}.
\begin{thm}
Let $\k$ be a non-Archimedean local field of characteristic different from $2$, let
$\theta\in\CO_\k\setminus\{0\}$ and let
 $A\subset\CB(\CH)$ be a $C^*$-algebra endowed with a  continuous action $\alpha$
of $\k^{2d}$. Then (keeping the notations displayed above) the norm:
$$
A_{\rm reg}\to\R^+\,,\quad a\mapsto\|a\|_\theta:=
\big\|{\bf W}_\theta^A\big(\tilde\alpha(a)\big)\big\|_{\CB(L^2(\k^d)\otimes\CH)},
$$ 
endows
$(A_{\rm reg},\star_\theta^\alpha,*)$ with the structure of a pre-$C^*$-algebra. 
We call its completion the 
$C^*$-deformation of the $C^*$-algebra $A$ and denote it by $A_\theta^\alpha$ (or by $A_\theta$ when
no confusion can occur). 
\end{thm}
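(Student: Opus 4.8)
The plan is to realize the proposed norm as the pullback along a $*$-homomorphism of the $C^*$-norm on $\CB\big(L^2(\k^d)\otimes\CH\big)$. This makes the $C^*$-algebraic identities formal, and reduces the statement to the single substantial point that the resulting seminorm is in fact a norm. By Definition~\ref{Areg} the map $\tilde\alpha$ of \eqref{alphamap} restricts to a continuous linear injection $A_{\rm reg}\hookrightarrow\CB(\k^{2d},A)$, and by the intertwining relation \eqref{OOO} together with Lemma~\ref{invol} this injection is a $*$-homomorphism from $(A_{\rm reg},\star_\theta^\alpha,*)$ to $(\CB(\k^{2d},A),\star_\theta,*)$ (note $\tilde\alpha(a^*)=\tilde\alpha(a)^*$ since each $\alpha_X$ is a $*$-automorphism). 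Composing with the continuous $*$-homomorphism ${\bf W}_\theta^A\colon(\CB(\k^{2d},A),\star_\theta,*)\to\CB\big(L^2(\k^d)\otimes\CH\big)$ of Corollary~\ref{HW} yields a $*$-homomorphism $\Theta:={\bf W}_\theta^A\circ\tilde\alpha$, with $\|a\|_\theta=\|\Theta(a)\|\leq\|\mu_0^{-2d-1}\|_1\,\|a\|_{2d+1}^A<\infty$ by Proposition~\ref{CV}. Since $\Theta$ takes values in a $C^*$-algebra, $\|\cdot\|_\theta$ is automatically a submultiplicative $*$-seminorm satisfying the $C^*$-identity $\|a^*\star_\theta^\alpha a\|_\theta=\|a\|_\theta^2$; this is the part that is an immediate consequence of Corollary~\ref{HW}.

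It then remains to prove that $\Theta$ is injective. The key input is the fact, recalled in the proof of Proposition~\ref{prop1} and due to Bechata, that the scalar quantization map $\bO_\theta^{\C}\colon\CB(\k^{2d})\to\CB\big(L^2(\k^d)\big)$ is injective. Assume $\Theta(a)={\bf W}_\theta^A\big(\tilde\alpha(a)\big)=0$ and set $F:=\tilde\alpha(a)\in\CB(\k^{2d},A)$. For $h\in\CH$ let $\omega_h:=\langle h,\,\cdot\;h\rangle_{\CH}$ be the associated vector functional on $\CB(\CH)\supset A$, and put $F_h:=\omega_h\circ F$. Then $F_h\in\CB(\k^{2d})$: indeed $\omega_h$ is a bounded linear functional on $A$, it commutes with the operators $J^n$ (which act only on the $\CS'(\k^{2d})$ tensor factor), so $J^nF_h=(J^nF)_h\in L^\infty(\k^{2d})$. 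Unwinding the weak-integral definition of ${\bf W}_\theta^A$ together with \eqref{W(F)} and using that $\omega_h$ commutes with the relevant integrals, one checks that the slice map $\id\otimes\omega_h$ sends ${\bf W}_\theta^A(F)$ to ${\bf W}_\theta^{\C}(F_h)=\bO_\theta^{\C}(F_h)$. Hence $\bO_\theta^{\C}(F_h)=0$, so $F_h=0$ by Bechata's injectivity, and since $\CB(\k^{2d})\subset C_u(\k^{2d})$ by Lemma~\ref{lem2} this forces $\langle h,F(X)h\rangle_{\CH}=0$ for every $X\in\k^{2d}$ and every $h\in\CH$. By polarization in $\CH$ we get $F(X)=0$ for all $X$, i.e.\ $\tilde\alpha(a)=0$, whence $a=\tilde\alpha(a)(0)=0$. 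Therefore $\|\cdot\|_\theta$ is a $C^*$-norm on $(A_{\rm reg},\star_\theta^\alpha,*)$ and the completion is a $C^*$-algebra.

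The only genuinely nontrivial step is this last faithfulness argument; everything else is a formal unwinding of Corollary~\ref{HW}. I expect the main technical obstacle to be the identification $(\id\otimes\omega_h)\big({\bf W}_\theta^A(F)\big)={\bf W}_\theta^{\C}(F_h)$, that is, justifying rigorously that the vector functional $\omega_h$ commutes with the weak operator-valued integral defining ${\bf W}_\theta^A$ and with the $A$-valued Bochner integral defining $W_{X,Y}^{\theta,A}(F)$ in \eqref{W(F)}. This should be routine given the absolute-convergence estimates of Lemma~\ref{W(F)L}, but it does need to be written out carefully.
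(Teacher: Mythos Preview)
Your proof is correct and follows the same route as the paper, which simply declares the result ``an immediate consequence of Corollary~\ref{HW}'' without further elaboration. You go further than the paper does by explicitly supplying the faithfulness argument (that $\|\cdot\|_\theta$ is a genuine norm, not merely a $C^*$-seminorm): the paper leaves this point implicit, whereas you reduce it cleanly to Bechata's scalar injectivity via slicing with vector functionals $\omega_h$. The technical point you flag---commuting $\omega_h$ through the weak integral defining ${\bf W}_\theta^A$ and the Bochner integral defining $W_{X,Y}^{\theta,A}$---is indeed routine from the absolute convergence in Lemma~\ref{W(F)L}, since $\omega_h$ is a bounded linear functional.
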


We first mention  an important feature, namely that the deformed algebra $A_\theta$ still carries
a continuous action of $\k^{2d}$. We stress that this property heavily  relies on the fact that 
our group is Abelian. For non-Abelian groups (e.g$.$ \cite{BG,NT}), the only 
surviving action is the one of a quantum group.
\begin{prop}
\label{ac}
The action $\alpha$ of $\k^{2d}$ on $(A_{\rm reg},\star_\theta^\alpha)$ extends to a  continuous 
action on $A_\theta$, that we denote by  $\alpha_\theta$.
\end{prop}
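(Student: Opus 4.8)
The plan is to establish two things: that the deformed $C^*$-norm $\|\cdot\|_\theta$ is invariant under each translation $\alpha_X$, $X\in\k^{2d}$, so that $\alpha_X$ extends uniquely to an isometric $*$-automorphism $\alpha_{\theta,X}$ of $A_\theta$ and $X\mapsto\alpha_{\theta,X}$ is a group homomorphism by density; and that the resulting action $\alpha_\theta$ is strongly continuous. The first point is the substantive one; the second is a routine density argument.

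For the invariance, the key is the covariance of the Wigner quantization map:
$$
{\bf W}_\theta^A(\tau_Y F)=\big(U_\theta(-Y)\otimes\Id\big)\,{\bf W}_\theta^A(F)\,\big(U_\theta(-Y)^{*}\otimes\Id\big),\qquad Y\in\k^{2d},\ F\in\CB(\k^{2d},A).
$$
When $F\in\CS(\k^{2d},A)\subset L^1(\k^{2d},A)$ this follows at once from the integral representation \eqref{SFA} and the identity $U_\theta(Y)\Omega_\theta(X)U_\theta(Y)^{*}=\Omega_\theta(X+Y)$ (it is the $A$-valued version of \eqref{cov}, using that ${\bf W}_\theta^A=\bO_\theta^A$ on $\CS(\k^{2d},A)$ by Corollary~\ref{HW}). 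To pass to arbitrary $F\in\CB(\k^{2d},A)$ one can either tensor the scalar identity \eqref{cov} with $\Id_A$ and extend by continuity, using density of $\CB(\k^{2d})\otimes A$ in $\CB(\k^{2d},A)$ and continuity of ${\bf W}_\theta^A$ (Proposition~\ref{CV}); or observe that both sides of the displayed identity are $*$-homomorphisms of $\big(\CB(\k^{2d},A),\star_\theta\big)$ --- for the right-hand side this uses Corollary~\ref{HW}, Lemma~\ref{aut} and the fact that conjugation by a fixed unitary is an automorphism of $\CB(L^2(\k^d)\otimes\CH)$ --- which coincide on the $\star_\theta$-ideal $\CS(\k^{2d},A)$ (Lemma~\ref{SIB}), hence on all of $\CB(\k^{2d},A)$ by non-degeneracy of $\bO_\theta^A$ on $\CS(\k^{2d},A)$. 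Granting the covariance, and since $\tilde\alpha(\alpha_X(a))=\tau_X\tilde\alpha(a)$, we obtain for $a\in A_{\rm reg}$
$$
\|\alpha_X(a)\|_\theta=\big\|{\bf W}_\theta^A(\tau_X\tilde\alpha(a))\big\|=\big\|\big(U_\theta(-X)\otimes\Id\big)\,{\bf W}_\theta^A(\tilde\alpha(a))\,\big(U_\theta(-X)^{*}\otimes\Id\big)\big\|=\|a\|_\theta .
$$
Hence each $\alpha_X$ is $\|\cdot\|_\theta$-isometric and extends uniquely to an isometry $\alpha_{\theta,X}$ of $A_\theta$. Since $\alpha_X$ is a $*$-automorphism of $(A_{\rm reg},\star_\theta^\alpha,*)$ with inverse $\alpha_{-X}$ and $X\mapsto\alpha_X$ is a homomorphism (Theorem~\ref{TDF}), these identities persist on $A_\theta$ by density and by continuity of its product and involution, so $\alpha_\theta:=\{\alpha_{\theta,X}\}_{X\in\k^{2d}}$ is an action of $\k^{2d}$ on $A_\theta$ by $*$-automorphisms. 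I expect this covariance step to be the main obstacle: one must avoid arguing by density of $\CS(\k^{2d},A)$, which is \emph{not} dense in $\CB(\k^{2d},A)$, and instead route through $\CB(\k^{2d})\otimes A$ or through the ideal $\CS(\k^{2d},A)$, the projective cocycle of the Schr\"odinger representation $U_\theta$ having already been absorbed into \eqref{cov}.

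For strong continuity it suffices, since the $\alpha_{\theta,X}$ are isometric, to prove by an $\eps/3$-argument that $X\mapsto\alpha_{\theta,X}(a)$ is continuous at $0$ for $a$ in a $\|\cdot\|_\theta$-dense subset of $A_\theta$. I will take this subset to be $A^\infty$, which is $\|\cdot\|_\theta$-dense in $A_\theta$ because it is Fr\'echet-dense in $A_{\rm reg}$ (Proposition~\ref{DAA}), the Fr\'echet topology is finer than that defined by $\|\cdot\|_\theta$ (Proposition~\ref{CV}), and $A_{\rm reg}$ is $\|\cdot\|_\theta$-dense in $A_\theta$ by construction. For $a\in A^\infty$, by the discussion following \eqref{Ainfty} there is an open neighbourhood $U$ of $0$ with $\alpha_x(a)=a$ for all $x\in U$; since $\alpha_{\theta,x}$ restricts to $\alpha_x$ on $A_{\rm reg}$, this gives $\alpha_{\theta,x}(a)=a$ for $x\in U$, so $X\mapsto\alpha_{\theta,X}(a)$ is locally constant near $0$. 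The $\eps/3$-argument then yields continuity at $0$ for every $a\in A_\theta$, and continuity at an arbitrary $X_0$ follows from $\alpha_{\theta,X}(a)-\alpha_{\theta,X_0}(a)=\alpha_{\theta,X_0}\big(\alpha_{\theta,X-X_0}(a)-a\big)$ together with the isometry of $\alpha_{\theta,X_0}$.
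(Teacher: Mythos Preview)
Your proof is correct and takes a genuinely different route from the paper's. The paper's argument is extremely brief: it observes that the embedding $\tilde\alpha:A_{\rm reg}\to\CB(\k^{2d},A)$ intertwines $\alpha$ with $\tau$, that $\tau$ is strongly continuous on $\CB(\k^{2d},A)$ (since $\CB(\k^{2d},A)\subset C_u(\k^{2d},A)$), and then simply invokes \cite[Proposition~5.11]{Ri} for the passage from the Fr\'echet level to the $C^*$-completion. In particular, the paper does not spell out the $\|\cdot\|_\theta$-isometry of $\alpha_X$; this is absorbed into the reference to Rieffel, whose argument runs through the $C^*$-module realization of the deformed norm.

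Your approach is more explicit and self-contained. For isometry you establish the covariance ${\bf W}_\theta^A(\tau_YF)=(U_\theta(-Y)\otimes\Id){\bf W}_\theta^A(F)(U_\theta(-Y)^*\otimes\Id)$ directly, offering two correct extension arguments from $\CS(\k^{2d},A)$ to $\CB(\k^{2d},A)$; the first (density of $\CB(\k^{2d})\otimes A$ together with the scalar covariance \eqref{cov}, which does hold on all of $\CB(\k^{2d})$ via Bechata's identification of $\bO_\theta$ with ${\bf W}_\theta^\C$) is the cleaner of the two, while the second requires you to note that the faithful representation $A\subset\CB(\CH)$ may be taken non-degenerate. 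For strong continuity your use of $A^\infty$ and the local constancy of $X\mapsto\alpha_X(a)$ is a nice touch that exploits the totally disconnected structure of $\k^{2d}$ in a way the paper does not. The trade-off is length: the paper's proof is three lines by outsourcing to Rieffel, whereas yours is a full argument that would stand on its own without external references.
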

\begin{proof}
Once we will have shown that $\alpha$ gives a  continuous action of $\k^{2d}$ on the Fr\'echet algebra $(A_{\rm reg},\star_\theta^\alpha)$,
the existence of the extension $\alpha_\theta$
  can be easily proven following the lines of  \cite[Proposition 5.11]{Ri}. That $\alpha_\theta$
 is  continuous on $A_{\rm reg}$ follows from the fact that the isometric embedding $A_{\rm reg}\to \CB(\k^{2d},A)$ 
 intertwines $\alpha$ with $\tau$ and that $\tau$ is  continuous on $\CB(\k^{2d},A)$
 as the latter is a subspace of $C_{u}(\k^{2d},A)$. That it is by automorphism on $A_\theta$ follows 
 from  Lemma \ref{aut}.
\end{proof}

\subsection{The $C^*$-module approach}
\label{2}
We  now  realize the deformed $C^*$-norm $\|.\|_\theta$ as  the  $C^*$-norm of bounded adjointable 
endomorphisms of a $C^*$-module for $A$,
 in a  manner very similar to the $\R^d$-case \cite{Ri}. However, this construction cannot substitute  the previous one since lattice methods
used in \cite{Ri} are not available here. 
In fact, we are in the same situation than those of negatively curved
K\"ahlerian Lie groups \cite{BG}.

Let
 $\langle.,.\rangle_A$  be the $A$-valued sesquilinear pairing on $\CS(\k^{2d},A)$ given by
$$
\langle f_1,f_2\rangle_A:=\int_{\k^{2d}} f_1^* (X) f_2(X)\,dX.
$$
This paring  is clearly well defined. Testing this paring on elementary
tensors, we deduce (by the density of products in a $C^*$-algebra) that $\langle\CS(\k^{2d},A),
\CS(\k^{2d},A)\rangle_A$ is dense in $A$. It is manifestly positive since 
$\langle f,f\rangle_A=\int_{\k^{2d}} |f (X)|^2\,dX\geq 0$ and 
$\langle f_1,f_2\rangle_A^*=\langle f_2,f_1\rangle_A$. If we endow further $\CS(\k^{2d},A)$
with the right action of the undeformed $C^*$-algebra $A$ given by juxtaposition:
$f.a:=[X\mapsto f(X)a]$, then we get $\langle f_1,f_2a\rangle_A=\langle f_1,f_2\rangle_A a$.
Hence, $\CS(\k^{2d})$ becomes a right pre-$C^*$-module for the undeformed $C^*$-algebra $A$.
Now, by Lemma \ref{SIB}, we know that $\big(\CB(\k^{2d},A),\star_\theta\big)$ acts continuously on 
$\CS(\k^{2d},A)$ by 
$$
L_\theta(F):\CS(\k^{2d},A)\to\CS(\k^{2d},A)\,,\quad f\mapsto F\star_\theta f.
$$
This action clearly commutes with the right action of $A$. 
That $L_\theta(F)$ is also adjointable and bounded follows from the following 
alternative expression for the paring:
\begin{lem}
\label{above}
For $f_1,f_2\in\CS(\k^{2d},A)$ and $\theta\in\CO_\k\setminus\{0\}$, we have:
$$
\langle f_1,f_2\rangle_A= \int_{\k^{2d}}\langle\eta^\theta_X,\bO_\theta^A(f_1^*\star_\theta f_2)\eta^\theta_X\rangle_{L^2(\k^d)}\,dX.
$$
\end{lem}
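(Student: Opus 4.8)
The plan is to split the identity into two pieces. Write $g:=f_1^*\star_\theta f_2$; since $\CS(\k^{2d},A)$ is a $\star_\theta$-ideal of $\CB(\k^{2d},A)$ (Lemma~\ref{SIB}) we have $g\in\CS(\k^{2d},A)$, so $\bO_\theta^A(g)$ is defined. I would then establish
\begin{align*}
\textrm{(A)}\quad\int_{\k^{2d}}\big\langle\eta^\theta_X,\bO_\theta^A(g)\,\eta^\theta_X\big\rangle_{L^2(\k^d)}\,dX=\int_{\k^{2d}}g(X)\,dX,\qquad
\textrm{(B)}\quad\int_{\k^{2d}}(f_1^*\star_\theta f_2)(X)\,dX=\langle f_1,f_2\rangle_A,
\end{align*}
the matrix element in (A) being taken in the $A$-valued sense (it defines an element of $A$ for each $X$, integrable in $X$ by Lemma~\ref{W(F)L}). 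Granting these two facts, the lemma is immediate: apply (A) to $g=f_1^*\star_\theta f_2$ and then (B).

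For (B) I would use the integral form of the Moyal product from Remark~\ref{morepower}, $h_1\star_\theta h_2(X)=|2|_\k^{2d}\iint\overline\Psi(2[Y,Z])\,h_1(X+\theta Y)\,h_2(X+Z)\,dY\,dZ$, valid for $h_1,h_2\in\CS(\k^{2d},A)$; integrating in $X$, the substitutions $X\mapsto X-\theta Y$ and then $Z\mapsto Z+\theta Y$ (licit since $[Y,Y]=0$) decouple the variables, and $\int_{\k^{2d}}\overline\Psi(2[Y,\cdot])\,dY$ produces a Dirac mass at the origin, leaving $\int h_1(X)h_2(X)\,dX$; with $h_1=f_1^*$, $h_2=f_2$ this is exactly $\int f_1^*(X)f_2(X)\,dX=\langle f_1,f_2\rangle_A$. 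For (A) there are two routes. The short one uses Lemma~\ref{mor} to write $\bO_\theta^A(g)=\bO_\theta^A(f_1)^*\bO_\theta^A(f_2)$ (a product of Hilbert--Schmidt-valued operators, by the $A$-valued version of \eqref{BBB}), then the resolution of the identity \eqref{RI} (with $\vf=\eta$, $\|\eta\|_2=1$) to identify $\int_{\k^{2d}}\langle\eta^\theta_X,R\,\eta^\theta_X\rangle_{L^2(\k^d)}\,dX=|\theta|_\k^d\,(\Tr\otimes\Id_A)(R)$ for $R\in\mathcal L^1(L^2(\k^d))\,\bar\otimes A$, and finally the $A$-valued Plancherel identity $(\Tr\otimes\Id_A)\big(\bO_\theta^A(f_1)^*\bO_\theta^A(f_2)\big)=|\theta|_\k^{-d}\int f_1^*(X)f_2(X)\,dX$, itself obtained from \eqref{BBB} by polarization, verification on elementary tensors $f_i=h_i\otimes a_i$, and density. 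The hands-on route does not pass through Lemma~\ref{mor} and holds for all $g\in\CS(\k^{2d},A)$: unfolding \eqref{SFA} and \eqref{Wigner2} gives $\langle\eta^\theta_X,\bO_\theta^A(g)\eta^\theta_X\rangle_{L^2(\k^d)}=W^{\theta,A}_{X,X}(g)=|\tfrac2\theta|_\k^d\int W^\theta_{X,X}(Z)\,g(Z)\,dZ$; then, using $U_\theta(Z)^*U_\theta(X)\in\mathbb U(1)\,U_\theta(X-Z)$ (so $W^\theta_{X,X}(Z)=\langle\eta^\theta_{X-Z},\Sigma\,\eta^\theta_{X-Z}\rangle$) together with translation-invariance of the Haar measure, $\int_{\k^{2d}}W^\theta_{X,X}(Z)\,dX=\int_{\k^{2d}}\langle\eta^\theta_X,\Sigma\,\eta^\theta_X\rangle\,dX=|\tfrac\theta2|_\k^d$ is a constant in $Z$ computed by a single explicit Fourier integral on $\k^d$, and Fubini yields $|\tfrac2\theta|_\k^d\cdot|\tfrac\theta2|_\k^d\int g(Z)\,dZ=\int g(Z)\,dZ$.

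The main obstacle is not analytic but notational and structural: one must give rigorous meaning to the $A$-valued matrix element $\langle\eta^\theta_X,\cdot\,\eta^\theta_X\rangle_{L^2(\k^d)}$ on $\CB(L^2(\k^d))\,\bar\otimes A$, to the slice trace $\Tr\otimes\Id_A$ on $\mathcal L^1(L^2(\k^d))\,\bar\otimes A$, and to the exchange of the $\k^{2d}$-integral with the trace (and, in the hands-on route, to the Fubini step). All of this is harmless: for Schwartz-valued symbols every integrand in sight is absolutely integrable in $A$-norm --- Lemma~\ref{Bech} bounds the $X$-support and the decay of the relevant Wigner functions, while \eqref{BBB} controls the Hilbert--Schmidt norms --- so the interchanges are legitimate, and the only genuine inputs are the homomorphism property (Lemma~\ref{mor}), the Plancherel property of $\bO_\theta$, and the explicit oscillatory computation behind (B).
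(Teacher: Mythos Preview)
Your proposal is correct, and your ``short route'' for (A) is essentially the paper's proof. The paper polarizes, reduces to $A=\C$ by density of $\CS(\k^{2d})\otimes A$, and then for scalar symbols uses exactly the two ingredients you name: unitarity of $\bO_\theta$ gives $\langle f,f\rangle_\C=|\theta|_\k^d\,\Tr\big(\bO_\theta(f^*\star_\theta f)\big)$, and the resolution of the identity \eqref{RI} gives $\Tr(S)=|\theta|_\k^{-d}\int\langle\eta^\theta_X,S\eta^\theta_X\rangle\,dX$ for positive trace-class $S$. Your short route simply runs this argument directly for general $A$ via the slice trace $\Tr\otimes\Id_A$, avoiding the reduction step.

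Your ``hands-on route'' together with (B) is a genuinely different and somewhat more elementary path. The paper never proves the trace identity (B) for the Moyal product (it does not need to, since unitarity of $\bO_\theta$ bypasses it), nor does it compute $\int_{\k^{2d}}W^\theta_{X,X}(Z)\,dX$ directly. Your argument replaces the appeal to trace-class theory and the resolution of the identity by two explicit oscillatory computations: the translation-invariance reduction $W^\theta_{X,X}(Z)=\langle\eta^\theta_{X-Z},\Sigma\,\eta^\theta_{X-Z}\rangle$ (whence the $X$-integral is constant in $Z$ and equals $|\tfrac\theta2|_\k^d$), and the familiar trace property $\int(h_1\star_\theta h_2)=\int h_1 h_2$ of the Moyal product. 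This has the virtue of working uniformly in $A$ and of making the structure transparent. One small remark: your justification of (B) via ``$\int\overline\Psi(2[Y,\cdot])\,dY$ produces a Dirac mass'' is formal as written (the triple integral is not absolutely convergent); it becomes rigorous if you first collapse the $Z$-integral into $(\CG h_2)(Y)$ before integrating in $X$, or if you insert a regularizer as in Lemma~\ref{sequence}.
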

\begin{proof}
By polarization,
we may assume without lost of generality that $f_1=f_2$.
Let $\langle .,.\rangle_A'$ be the $A$-valued paring given by the right hand side of 
 the equality we have to prove. Let us first show that it is well defined.  
Note that for $f\in\CS(\k^{2d},A)$, we have 
$$
\langle\eta^\theta_X,\bO_\theta^A(f^*\star_\theta f)\eta^\theta_X\rangle_{L^2(\k^d)}=
\big\langle\eta^\theta_X,\Big(\big|\tfrac2\theta\big|^{d}_\k
 \int_{\k^{2d}} f^*\star_\theta f(Y) \otimes\Omega_\theta(Y)dY\Big)\eta^\theta_X
\big\rangle_{L^2(\k^d)}.
$$
Since the integral  converges in the norm of $\CB\big(L^2(\k^d)\otimes\CH\big)$, we get
\begin{align*}
\langle\eta^\theta_X,\bO_\theta^A(f^*\star_\theta f)\eta^\theta_X\rangle_{L^2(\k^d)}&=\big|\tfrac2\theta\big|^{d}_\k
\int_{\k^{2d}} f^*\star_\theta f(Y) \langle\eta^\theta_X,\Omega_\theta(Y)\eta^\theta_X\rangle_{L^2(\k^d)} dY
\\&=
\big|\tfrac2\theta\big|^{d}_\k
\int_{\k^{2d}} f^*\star_\theta f(Y) \,W^\theta_{X,X}(Y) dY=
W_{X,X}^{\theta,A}( f^*\star_\theta f).
\end{align*}
We conclude using Lemma \ref{W(F)L} which gives in that case
$$
\|W^{\theta,A}_{X,X}( f^*\star_\theta f)\|_A\leq  \mu_0(X)^{-2d-1} \,\mathfrak P^A_{2d+1,0}(f^*\star_\theta f),
$$
and thus 
$$
\|\langle f,f\rangle_A'\|\leq   \|\mu_0^{-2d-1}\|_1 \,
\mathfrak P^A_{2d+1,0}(f^*\star_\theta f)<\infty.
$$
From this inequality, we also deduce that it is enough to treat the case $A=\C$. Indeed, if the equality
works on $\CS(\k^{2d})$ then it works on the algebraic tensor product  
$\CS(\k^{2d})\otimes A$ and one concludes using  a limiting argument based on
$\|\langle f,f\rangle_A\|\leq \|f\|_2^2$ and on  the inequality given above for
$\|\langle f,f\rangle_A'\|$.

In the case $A=\C$, note first that  by unitarity of the quantization map,
we have 
$$
\langle f,f\rangle_\C=|\theta|_\k^d\,\Tr\big({\bf \Omega_\theta}(f)^*{\bf \Omega_\theta}(f)\big)
=|\theta|_\k^d\,\Tr\big({\bf \Omega_\theta}( f^*\star_\theta f)\big).
$$ 
The resolution of the identity \eqref{RI}, implies that for a positive trace-class 
operator $S$ on $L^2(\k^d)$, we have 
$\Tr(S)=|\theta|_\k^{-d}\int_{\k^{2d}}\langle\eta^\theta_X,S\eta^\theta_X\rangle\,dX$. Indeed, since $\|\eta\|_2=1$, we get by \eqref{RI} 
that for all $\vf\in L^2(\k^d)$, $\langle\vf,\vf\rangle=|\theta|_\k^{-d}\int \langle\eta^\theta_X,\vf\rangle\langle\vf,\eta^\theta_X\rangle dX$.
Hence, for any orthonormal basis $(\vf_k)_{k\in\N}$,  using  monotone convergence and  
$\sum_{k\in\N}|\vf_k\rangle\langle\vf_k|=\Id$ in the weak sense, we get
\begin{align*}
\Tr(S)=\sum_{k\in\N}\langle\vf_k,S\vf_k\rangle_{L^2(\k^d)}
&=\sum_{k\in\N}\langle S^{1/2}\vf_k,S^{1/2}\vf_k\rangle_{L^2(\k^d)}\\
&=|\theta|_\k^{-d}\,\sum_{k\in\N}\int_{\k^{2d}} \langle S^{1/2}\vf_k,\eta^\theta_X\rangle_{L^2(\k^d)}
\langle\eta^\theta_X,S^{1/2}\vf_k\rangle_{L^2(\k^d)}
 dX\\
&=|\theta|_\k^{-d}\,\int_{\k^{2d}} \sum_{k\in\N} \langle S^{1/2}\eta^\theta_X,\vf_k\rangle_{L^2(\k^d)}\langle \vf_k,S^{1/2}\eta^\theta_X\rangle_{L^2(\k^d)}
 dX\\&
=|\theta|_\k^{-d}\,\int_{\k^{2d}}  \langle\eta^\theta_X,S\eta^\theta_X\rangle_{L^2(\k^d)} dX,
\end{align*}
which completes the proof.
\end{proof}

From the expression of $\langle.,.\rangle_A$ given in Lemma \ref{above}, 
it is clear that the operator $L_\theta(F)$, $F\in\CB(\k^{2d},A)$,
is adjointable with adjoint $L_\theta(F^*)$. But the elementary operator inequality
on $\CB(L^2(\k^d)\otimes\CH)$:
$$
\bO_\theta^A(f^*\star_\theta F^*\star_\theta F\star_\theta f)=\bO_\theta^A(f^*) |{\bf W}^A_\theta(F)|^2
\bO_\theta^A(f^*)\leq\| {\bf W}^A_\theta(F)\|^2 \bO_\theta^A(f^*\star_\theta f),
$$
entails  that $\langle L_\theta(F) f , L_\theta(F) f \rangle_A\leq \| {\bf W}^A_\theta(F)\|^2
\langle f , f \rangle_A$. Hence for $F\in\CB(\k^{2d},A)$,  $L_\theta(F)$ is 
bounded adjointable $A$-linear endomorphism of $\CS(\k^{2d},A)$,
 with $\| L_\theta(F)\|\leq \| {\bf W}^A_\theta(F)\|$.
In fact this inequality is an equality. 
Indeed, by construction, the restriction to the algebraic
tensor product $\CB(\k^{2d})\otimes A$ of the deformed $C^*$-norm $F\mapsto \| {\bf W}^A_\theta(F)\|$
coincides with the minimal $C^*$-norm on the algebraic tensor product of the $C^*$-completion
of $\big(\CB(\k^{2d}),\star_\theta\big)$ by $A$. But the restriction to the algebraic
tensor product $\CB(\k^{2d})\otimes A$ of the $C^*$-norm $F\mapsto \| L_\theta(F)\|$ extends to a
$C^*$-cross norm on the $C^*$-completion of $\big(\CB(\k^{2d}),\star_\theta\big)$ by $A$.
Hence, the two norms coincide. Restricting this to the image of $A_{\rm reg}$ in $\CB(\k^{2d},A)$,
we deduce:

\begin{prop}
\label{CMA}
Let $\theta\in\CO_\k\setminus\{0\}$.
Then the deformed $C^*$-norm $\|.\|_\theta$ on the Fr\'echet $*$-algebra 
$(A_{\rm reg},\star_\theta^\alpha,*)$ coincides with:
$$
A_{\rm reg}\to\R^+\,,\quad a\mapsto \|L_\theta\big(\tilde\alpha(a)\big)\|.
$$
\end{prop}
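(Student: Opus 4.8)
The plan is to prove the slightly more general identity
$$
\|L_\theta(F)\|=\big\|{\bf W}_\theta^A(F)\big\|_{\CB(L^2(\k^d)\otimes\CH)}\,,\qquad F\in\CB(\k^{2d},A),
$$
and then specialise to $F=\tilde\alpha(a)$, $a\in A_{\rm reg}$: by definition $\|a\|_\theta=\|{\bf W}_\theta^A(\tilde\alpha(a))\|$, the map $\tilde\alpha$ identifies $A_{\rm reg}$ with a closed subspace of $\CB(\k^{2d},A)$, and $L_\theta(\tilde\alpha(a))$ is precisely the operator $f\mapsto\tilde\alpha(a)\star_\theta f$ on $\CS(\k^{2d},A)$. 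The inequality ``$\le$'' is the one already obtained in the discussion preceding the statement: feeding the formula of Lemma \ref{above} for $\langle\cdot,\cdot\rangle_A$ into the operator inequality in $\CB(L^2(\k^d)\otimes\CH)$ established there gives $\langle L_\theta(F)f,L_\theta(F)f\rangle_A\le\|{\bf W}_\theta^A(F)\|^2\,\langle f,f\rangle_A$ for every $f\in\CS(\k^{2d},A)$, hence $\|L_\theta(F)\|\le\|{\bf W}_\theta^A(F)\|$. In particular both $F\mapsto\|L_\theta(F)\|$ and $F\mapsto\|{\bf W}_\theta^A(F)\|$ are $C^*$-seminorms on $\CB(\k^{2d},A)$, continuous by Proposition \ref{CV}, so by density of $\CB(\k^{2d})\otimes A$ in $\CB(\k^{2d},A)$ it will be enough to prove the reverse inequality on this algebraic tensor product.

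The second step is to unpack the two maps on $\CB(\k^{2d})\otimes A$. Since the deformed product $\star_\theta$ of Proposition \ref{prop1} acts only on the $\k^{2d}$-variable and multiplies the $A$-values pointwise, one has $(F_0\otimes a)\star_\theta(G_0\otimes b)=(F_0\star_\theta G_0)\otimes(ab)$; consequently, on $\CB(\k^{2d})\otimes A$, $L_\theta$ is $L_\theta^\C\otimes M$, where $L_\theta^\C(F_0)f_0:=F_0\star_\theta f_0$ on $\CS(\k^{2d})$ and $M$ is the (faithful) left-multiplication action of $A$ on itself, while from the defining integral of ${\bf W}_\theta^A$ one likewise gets ${\bf W}_\theta^A(F_0\otimes a)={\bf W}_\theta^\C(F_0)\otimes a$. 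Let $B_\theta$ be the $C^*$-completion of $(\CB(\k^{2d}),\star_\theta)$ for the $C^*$-seminorm $F_0\mapsto\|{\bf W}_\theta^\C(F_0)\|$, which is a norm by \cite[Th\'eor\`eme 3.3]{Bechata}. Then ${\bf W}_\theta^\C$ is a faithful representation of $B_\theta$ on the Hilbert space $L^2(\k^d)$, and $L_\theta^\C$ is a faithful representation of $B_\theta$ as well: if $L_\theta^\C(F_0)=0$ then $g_0\star_\theta F_0\star_\theta f_0=0$ for all $f_0,g_0\in\CS(\k^{2d})$, so $\bO_\theta(g_0)\,{\bf W}_\theta^\C(F_0)\,\bO_\theta(f_0)=0$ by the homomorphism property of Corollary \ref{HW}, and since $\bO_\theta(\CS(\k^{2d}))$ is dense in the Hilbert--Schmidt class (the scalar quantization map being unitary from $L^2(\k^{2d})$), this forces ${\bf W}_\theta^\C(F_0)=0$, i.e. $F_0=0$ in $B_\theta$.

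The third step is a tensor-norm identification. Completing $\CS(\k^{2d},A)$ for $\langle\cdot,\cdot\rangle_A$ gives the standard Hilbert $A$-module $L^2(\k^{2d},A)$, and interiorising with the faithful representation $A\subset\CB(\CH)$ identifies $L^2(\k^{2d},A)\otimes_A\CH$ with $L^2(\k^{2d})\otimes\CH$ in such a way that $L_\theta^\C\otimes M$ becomes the Hilbert-space representation $L_\theta^\C\otimes\id_\CH$; likewise ${\bf W}_\theta^A=\ {\bf W}_\theta^\C\otimes\id_\CH$ on $L^2(\k^d)\otimes\CH$. Since the minimal $C^*$-tensor norm on $B_\theta\otimes A$ is computed by tensoring \emph{any} pair of faithful Hilbert-space representations of $B_\theta$ and $A$, both $\|L_\theta(\cdot)\|$ and $\|{\bf W}_\theta^A(\cdot)\|$ restrict on $\CB(\k^{2d})\otimes A$ to $\|\cdot\|_{B_\theta\otimes_{\rm min}A}$, and therefore agree there. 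By density and continuity the equality propagates to all of $\CB(\k^{2d},A)$, and restricting to $F=\tilde\alpha(a)$ finishes the proof. I expect the only genuinely delicate point to be this last step — the $C^*$-module bookkeeping needed to see $L_\theta^\C\otimes M$ as a \emph{faithful} Hilbert-space representation of $B_\theta\otimes_{\rm min}A$ (equivalently: that $L_\theta$ is injective on $\CB(\k^{2d})\otimes A$, via a matrix-coefficient argument using that $A$ acts non-degenerately on $\CH$); once this is in place, everything else is the standard uniqueness theory of $C^*$-tensor norms.
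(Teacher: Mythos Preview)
Your proof is correct and follows essentially the same route as the paper: establish $\|L_\theta(F)\|\le\|{\bf W}_\theta^A(F)\|$ via Lemma \ref{above}, reduce the reverse inequality to the algebraic tensor product $\CB(\k^{2d})\otimes A$, identify the norms there using $C^*$-tensor-norm theory, and extend by density and continuity. The only real difference is tactical: the paper, having the inequality $\|L_\theta\|\le\|{\bf W}_\theta^A\|$ in hand, simply notes that on $\CB(\k^{2d})\otimes A$ the ${\bf W}_\theta^A$-norm is the minimal $C^*$-tensor norm while the $L_\theta$-norm is \emph{some} $C^*$-cross norm, whence $\|{\bf W}_\theta^A\|\le\|L_\theta\|$ there by minimality --- so it never needs to check that $L_\theta^\C$ is faithful on $B_\theta$, the extra step you carry out in your second paragraph.
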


The main point with the realization of the deformed  $C^*$-norm as the operator norm on 
the  pre-$C^*$-module $\CS(\k^{2d},A)$ is that it still makes sense for the value $\theta=0$,
where there is no pseudo-differential calculus. Indeed, when $\theta=0$ the product on
$A_{\rm reg}$ is the undeformed one (by Proposition \ref{prop1}) and thus $A_{\rm reg}$ 
acts on the left of $\CS(\k^{2d},A)$ via $f\mapsto\tilde\alpha(a)\,f$. Since moreover
$$
\| L_{\theta=0}\big(\tilde\alpha(a)\big)\|=\sup_{X\in\k^{2d}}\|\alpha_X(a)\|_A=\|a\|_A\,,\quad
\forall a\in A_{\rm reg},
$$
we deduce that $A_{\theta=0}=A$.

As an illustration of the interest of the $C^*$-module approach to the deformation,
we  clarify the relations between the deformations of $C_0(\k^{2d},A)$ and
of $C_{u}(\k^{2d},A)$ for the action given by translations on the one hand and the $C^*$-closures of
the Fr\'echet algebras
$\big(\CS(\k^{2d},A),\star_\theta\big)$ and of $\big(\CB(\k^{2d},A),\star_\theta\big)$
induced by the representations $\bO_\theta^A$ and ${\bf W}^A_\theta$ on the other hand.

 \begin{prop}
 \label{normal}
 Let $\theta\in\CO_\k$.
 Consider the $C^*$-algebras $C_0(\k^{2d},A)$ and $C_{u}(\k^{2d},A)$ endowed with the
 action of $\k^{2d}$ given by $\tau\otimes\Id$ where as usual $\tau$ is the action by translations. Then
 we have the isomorphisms:
$$
 C_0(\k^{2d},A)_\theta\simeq \overline{(\CS(\k^{2d},A),\star_\theta)}\qquad
\mbox{ and}\qquad
 C_{u}(\k^{2d},A)_\theta\simeq \overline{(\CB(\k^{2d},A),\star_\theta)}.
$$
Moreover, when $\theta\in\CO_\k\setminus\{0\}$, then
$$
C_0(\k^{2d},A)_\theta\simeq 
 \mathcal K\big(L^2(\k^d)\big)\otimes A.
$$
 \end{prop}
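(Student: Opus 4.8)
The plan is to handle the two base algebras $B=C_0(\k^{2d},A)$ and $B=C_u(\k^{2d},A)$, each with the action $\beta:=\tau\otimes\Id$, by first describing their regular subalgebras. Since $\tilde\beta(b)(X)=\tau_X(b)$ and $J$ commutes with the translations, one has $(J^n\tilde\beta(b))(X)=\tau_X(J^nb)$ in $\CS'(\k^{2d},A)$; as $\tau$ is isometric on $B$, this shows $b\in B_{\rm reg}$ iff $J^nb\in B$ for all $n$, with $\|b\|_n^B=\sup_X\|J^nb\|_\infty$. By Lemma~\ref{lem2} ($\CB(\k^{2d},A)\subset C_u(\k^{2d},A)$) this gives $(C_u(\k^{2d},A))_{\rm reg}=\CB(\k^{2d},A)$ with matching Fr\'echet topology, and $(C_0(\k^{2d},A))_{\rm reg}=\{b\in C_0(\k^{2d},A):J^nb\in C_0(\k^{2d},A)\ \forall n\}$, an algebra squeezed between $\CS(\k^{2d},A)$ and $\CB(\k^{2d},A)$. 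Next I would check, using the pointwise formula for $\star_\theta$ from Remark~\ref{morepower} and the fact that $J$ and $J_\theta$ commute with $\tau$, that $\star_\theta^\beta$ on $B_{\rm reg}$ is precisely the restriction of $\star_\theta$ on $\CB(\k^{2d},A)$, and that the involutions agree (Lemma~\ref{invol}). Hence, as Fr\'echet $*$-algebras, $(B_{\rm reg},\star_\theta^\beta,*)$ equals $(\CB(\k^{2d},A),\star_\theta,*)$ when $B=C_u(\k^{2d},A)$, and is a $*$-subalgebra containing $\CS(\k^{2d},A)$ when $B=C_0(\k^{2d},A)$.

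The crux is to identify the deformed $C^*$-norm $\|b\|_\theta=\|{\bf W}_\theta^{B}(\tilde\beta(b))\|$ with $\|{\bf W}_\theta^{A}(b)\|$. Realizing $B$ by multiplication operators on $L^2(\k^{2d})\otimes\CH$, so that ${\bf W}_\theta^{B}(\tilde\beta(b))$ acts on $L^2(\k^d)\otimes L^2(\k^{2d})\otimes\CH$, I would use the covariance relation $\Omega_\theta(Z+V)=U_\theta(V)\,\Omega_\theta(Z)\,U_\theta(V)^*$ (a consequence of \eqref{cov}) together with the substitution $(X,Y)\mapsto(X-U,Y-U)$ in the defining integral of ${\bf W}_\theta^{B}$ to show that multiplication by $W^{\theta,B}_{X,Y}(\tilde\beta(b))$, evaluated at the point $U\in\k^{2d}$, equals the operator $W^{\theta,A}_{X+U,Y+U}(b)$ up to a scalar phase coming from the projective cocycle of $U_\theta$. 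Absorbing these phases into the fibrewise (over $U$) unitary acting on the $L^2(\k^d)$-leg by $U_\theta(U)^*$, one finds that ${\bf W}_\theta^{B}(\tilde\beta(b))$ is unitarily equivalent to $\Id_{L^2(\k^{2d})}\otimes{\bf W}_\theta^{A}(b)$ (up to reordering tensor factors), whence $\|b\|_\theta=\|{\bf W}_\theta^{A}(b)\|$; running the same computation with $B=C_0(\k^{2d},A)$ yields this identity in particular on $\CS(\k^{2d},A)$. Bookkeeping the projective cocycle phases in this step is, I expect, the only genuinely delicate point.

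Granting the norm identification, the first two isomorphisms follow. For $B=C_u(\k^{2d},A)$ we already have $(C_u(\k^{2d},A))_{\rm reg}=\CB(\k^{2d},A)$ as $*$-algebras and $\|\cdot\|_\theta=\|{\bf W}_\theta^{A}(\cdot)\|$, so $C_u(\k^{2d},A)_\theta=\overline{(\CB(\k^{2d},A),\star_\theta)}$. For $B=C_0(\k^{2d},A)$ I would show that $\CS(\k^{2d},A)$ is dense in $(C_0(\k^{2d},A))_{\rm reg}$ for the Fr\'echet topology: given $b$ in the latter and $N$ large, write $b=J^{-N}(J^Nb)$ with $J^Nb\in C_0(\k^{2d},A)$, approximate $J^Nb$ in the sup norm by an element $g$ of the algebraic tensor product $\CD(\k^{2d})\otimes A$ (dense in $C_0(\k^{2d},A)$), and set $f:=J^{-N}g\in\CS(\k^{2d},A)$ (recall $J^{-N}$ is continuous on $\CS(\k^{2d},A)$); since $\CG(\mu_0^{s})\in L^1(\k^{2d})$ and is compactly supported for $s<-2d$ (Lemma~\ref{lem1}), one gets $\|J^n(b-f)\|_\infty\le\|\CG(\mu_0^{n-N})\|_1\,\|J^Nb-g\|_\infty$ for $n\le N-2d-1$, which is small. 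As $\|\cdot\|_\theta$ is continuous for the Fr\'echet topology (Proposition~\ref{CV}), $\CS(\k^{2d},A)$ is $\|\cdot\|_\theta$-dense in $(C_0(\k^{2d},A))_{\rm reg}$, so $C_0(\k^{2d},A)_\theta=\overline{(\CS(\k^{2d},A),\star_\theta)}$. The degenerate case $\theta=0$ follows directly from $A_{\theta=0}=A$ together with the density of $\CD(\k^{2d})$ in $C_0(\k^{2d})$ and of $\CB(\k^{2d},A)$ in $C_u(\k^{2d},A)$.

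Finally, for $\theta\in\CO_\k\setminus\{0\}$, recall that $\overline{(\CS(\k^{2d},A),\star_\theta)}$ is by definition the closure of $\bO_\theta^{A}(\CS(\k^{2d},A))$, the norms induced by $\bO_\theta^A$ and ${\bf W}_\theta^A$ agreeing on $\CS$ by Corollary~\ref{HW}. Restricted to the algebraic tensor product $\CS(\k^{2d})\otimes A$, the norm $\|\bO_\theta^{A}(\cdot)\|$ is the minimal $C^*$-cross-norm (as recalled before Proposition~\ref{CMA}), so its completion is $\overline{(\CS(\k^{2d}),\star_\theta)}^{C^*}\otimes_{\min}A$, and since $\CS(\k^{2d})\otimes A$ is Fr\'echet-dense, hence $\|\cdot\|_\theta$-dense, in $\CS(\k^{2d},A)$ this is already the full completion. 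Now $\bO_\theta$ is $|\theta|_\k^{-d/2}$ times a unitary from $L^2(\k^{2d})$ onto the Hilbert--Schmidt operators on $L^2(\k^d)$ (see \eqref{BBB}) and is injective, so $\bO_\theta(\CS(\k^{2d}))$ is dense in the Hilbert--Schmidt operators in Hilbert--Schmidt norm, a fortiori in operator norm; since the Hilbert--Schmidt operators are operator-norm dense in $\mathcal K(L^2(\k^d))$, we obtain $\overline{(\CS(\k^{2d}),\star_\theta)}^{C^*}\cong\mathcal K(L^2(\k^d))$. Nuclearity of $\mathcal K(L^2(\k^d))$ then gives $\mathcal K(L^2(\k^d))\otimes_{\min}A=\mathcal K(L^2(\k^d))\otimes A$, completing the proof.
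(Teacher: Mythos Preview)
Your argument is correct, and the final paragraph on $\mathcal K(L^2(\k^d))\otimes A$ is essentially identical to the paper's. The main difference lies in how the deformed norm on $B_{\rm reg}$ is identified with $\|{\bf W}_\theta^A(\cdot)\|$: the paper invokes the $C^*$-module realization of the norm (Proposition~\ref{CMA}) and then follows Rieffel's argument verbatim, whereas you work directly in the Wigner/pseudo-differential picture and exhibit an explicit unitary equivalence between ${\bf W}_\theta^{B}(\tilde\beta(b))$ and $\Id_{L^2(\k^{2d})}\otimes{\bf W}_\theta^{A}(b)$. Your route is more self-contained (it does not rely on the $C^*$-module construction at all), while the paper's route is shorter but outsources the key step to \cite[Proposition~4.15]{Ri}.

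One simplification for your ``delicate point'': rather than tracking the projective cocycle through $W^\theta_{X,Y}$, use covariance at the level of ${\bf W}_\theta^A$ itself. Since $B$ acts by multiplication on $L^2(\k^{2d},\CH)$, evaluating at $U\in\k^{2d}$ gives
\[
\big[{\bf W}_\theta^{B}(\tilde\beta(b))\Phi\big](U)={\bf W}_\theta^{A}\big(\tau_U b\big)\,\Phi(U),
\]
because $\tilde\beta(b)(\cdot)(U)=\tau_U b$. Then \eqref{cov} (extended by density from $\bO_\theta^A$ to ${\bf W}_\theta^A$) yields ${\bf W}_\theta^{A}(\tau_U b)=U_\theta(U)^*\,{\bf W}_\theta^{A}(b)\,U_\theta(U)$, and conjugating by the fibrewise unitary $\Phi\mapsto[U\mapsto U_\theta(U)\Phi(U)]$ removes all phases at once. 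No explicit cocycle bookkeeping is needed.

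Your Fr\'echet density argument for $\CS(\k^{2d},A)\subset (C_0(\k^{2d},A))_{\rm reg}$ via $b=J^{-N}(J^Nb)$ is also slightly more explicit than the paper, which simply asserts the dense inclusion.
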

\begin{proof}
 When $A=\C$ and $\theta\ne 0$,  the quantization map $\bO_\theta$  is a 
 (multiple of a) unitary 
operator  from $L^2(\k^{2d})$ to the Hilbert-Schmidt operators on $L^2(\k^d)$.   Since 
$\CS(\k^{2d})$ is densely contained in $L^2(\k^{2d})$, and since the Hilbert-Schmidt
operators are norm-dense in the compacts,
we get after completion $\overline{(\CS(\k^{2d}),\star_\theta)}\simeq  \mathcal K\big(L^2(\k^d)\big)$.
The associated isomorphism with $A$ arbitrary then follows by nuclearity of the 
compact operators.  The first two isomorphisms can be proven exactly as in
  \cite[Proposition 4.15]{Ri}, by observing that $\CB(\k^{2d},A)= C_{u}(\k^{2d},A)_{\rm reg}$
  and that $\CS(\k^{2d},A)\subset C_0(\k^{2d},A)_{\rm reg}$ densely, 
  and using the $C^*$-module picture for the deformed $C^*$-norm.
\end{proof}

In the case of actions of $\R^{2d}$, Rieffel proved in \cite{Ri3} that the $K$-theory is an invariant
of the deformation. In \cite{BG}, we also proved the same result for actions of negatively curved
K\"ahlerian group. From the isomorphisms given in Proposition \ref{normal},
we easily deduce this property no longer holds here:

\begin{cor}
The $K$-theory is not an invariant of the deformation.
\end{cor}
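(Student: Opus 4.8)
The plan is to produce a single $C^*$-algebra carrying a $\k^{2d}$-action whose deformations $A_\theta$ do not all have isomorphic $K$-theory, reading the $K$-groups off the explicit models supplied by \pref{normal}. The example I would use is the simplest possible one: take $A=\C$ and let $\k^{2d}$ act on $C_0(\k^{2d})$ by translations, i.e. $\alpha=\tau$.

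First I would identify the two ends of the parameter range. Since $\theta=0$ returns the undeformed algebra — this is the general fact $A_{\theta=0}=A$ recorded after \pref{CMA}, or equivalently the case $\theta=0$ of \pref{normal}, in which $\star_0$ is the pointwise product — we have $C_0(\k^{2d})_{\theta=0}=C_0(\k^{2d})$. By contrast, \pref{normal} gives for every $\theta\in\CO_\k\setminus\{0\}$ the isomorphism $C_0(\k^{2d})_\theta\simeq\CK\big(L^2(\k^d)\big)$. It then remains to check that $C_0(\k^{2d})$ and $\CK\big(L^2(\k^d)\big)$ do not have isomorphic $K$-theory, which is elementary: one has $K_0\big(\CK(L^2(\k^d))\big)\cong\Z$ and $K_1\big(\CK(L^2(\k^d))\big)=0$, whereas $\k^{2d}$ is a non-compact, totally disconnected, $\sigma$-compact locally compact space, so $K_1\big(C_0(\k^{2d})\big)=0$ and $K_0\big(C_0(\k^{2d})\big)\cong C_c(\k^{2d},\Z)$ is free abelian of countably infinite rank. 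For the rank claim it is enough to fix a countable partition $\k^{2d}=\bigsqcup_{n\in\N}U_n$ into nonempty compact open sets and to note that the idempotents $\mathbf 1_{U_n}\in C_0(\k^{2d})$ have nonzero and $\Z$-linearly independent classes in $K_0$. In particular $K_0\big(C_0(\k^{2d})\big)\not\cong\Z$, so the $K$-theory of $C_0(\k^{2d})_\theta$ genuinely depends on whether $\theta=0$ or $\theta\neq0$, which is the assertion.

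I do not expect a real obstacle here: all the analytic work is already done in \pref{normal}, and the corollary reduces to the standard observation that $C_0$ of a non-compact totally disconnected space has a large $K_0$ while the compact operators have $K_0\cong\Z$. The only point worth emphasising is the contrast with the Archimedean situation — over $\R^{2d}$ Rieffel showed \cite{Ri3} that $K$-theory is a deformation invariant — which here breaks down precisely because the parameter space $\CO_\k$ contains $0$, with $A_0=A$ on one side and $A_\theta\simeq\CK\big(L^2(\k^d)\big)\otimes A$ on the other.
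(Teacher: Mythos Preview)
Your proposal is correct and follows essentially the same route as the paper: both take $C_0(\k^{2d})$ with the translation action, invoke \pref{normal} to identify the deformation for $\theta\ne0$ with $\CK(L^2(\k^d))$, and contrast $K_0\cong\Z$ on the deformed side against $K_0\big(C_0(\k^{2d})\big)\cong C_c(\k^{2d},\Z)$ on the undeformed side. Your write-up adds a little more detail (the rank argument for $K_0$ and the $K_1$ computation), but the idea is identical.
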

\begin{proof}
We  give a counter example. Take $A=C_0(\k^{2d})$. As $\k$ is a totally
disconnected space, $K_0(A)=C_c(\k^{2d},\mathbb Z)$. On the other hand, Proposition
\ref{normal} says  that the deformation of $A$ by the regular action is the $C^*$-algebra of compact operators.
Hence $K_0(A_\theta)=\Z$. (Note that in this example the $K_1$-group is not deformed
as it is trivial in both cases.)
\end{proof}

\subsection{The twisted crossed product approach}
\label{3}

There is a third way to realize the deformed $C^*$-norm on the deformed Fr\'echet algebra
$(A_{\rm reg},\star_\theta^\alpha)$, which is based on the work of 
Kasprzak \cite{K1} and its development   by  Neshveyev et al.  \cite{Nes-Shanga,NT,Nes-flat}.
Kasprzak's original construction uses general results on crossed product and the notion of Landstad 
algebras. It applies to 
 continuous actions of a locally compact Abelian groups on  $C^*$-algebras and is parametrized by a 
continuous unitary 2-cocycle 
on the dual group. In fact, the deformed algebra $A_\theta^K$ in Kasprzak's
picture is abstractly characterized by a crossed product bi-decomposition 
$\k^{2d}\ltimes_{\alpha_\theta} A_\theta^K=\k^{2d}\ltimes_{\alpha} A$, where $\alpha_\theta$
is the extension of $\alpha$ from $A_{\rm reg}$ to $A_\theta$, as described in Proposition \ref{ac}. 
 An equivalent and more concrete approach (see below) has been  given by 
Bhowmick, Neshveyev and  Sangha in \cite{Nes-Shanga}, which applies to
 continuous actions of  locally compact  groups (not necessarily Abelian)
 on $C^*$-algebras and is parametrized by a 
measurable unitary 2-cocycle on the dual, viewed as a quantum group. This approach to deformation had
been extended in full generality in \cite{NT} to continuous actions of  locally compact quantum  groups (in
the von Neumann algebraic setting)
on $C^*$-algebras and is still parametrized by a 
measurable unitary 2-cocycle on the dual quantum group. Here we mostly follow the paper
\cite{Nes-flat}, we let $\theta\in\CO_\k\setminus\{0\}$ and we still assume that $\k$ is of characteristic
different from $2$.

For $X\in\k^{2d}$, let  $V_X^\theta$ be  the unitary operator
on $L^2(\k^{2d})$ given  by
$$
V_X^\theta f(Y):=\overline\Psi\big(\tfrac2\theta[X,Y]\big)\, f(X+Y).
$$
The operators  $(V_X^\theta)_{X\in\k^{2d}}$ satisfies   Weyl type relations $V_{X+Y}^\theta
=\overline\Psi
(\tfrac2\theta[X,Y])\,V_X^\theta\,V_Y^\theta$. The $C^*$-sub-algebra of $\CB(L^2(\k^{2d}))$
 generated by the operators
$$
V_f^\theta:=\int_{\k^{2d}} f(X)\,V_X^\theta\,dX\,,\qquad f\in L^1(\k^{2d}),
$$
 is called the twisted group $C^*$-algebra and is denoted by  $C_\theta^*(\k^{2d})$.
For $f_1,f_2\in L^1(\k^{2d})$, we have $V_{f_1}^\theta V_{f_2}^\theta=V_{f_1\ast_\theta f_2}^\theta$,
 where $\ast_\theta$ is the twisted convolution product, defined by
 $$
 f_1\ast_\theta f_2(X):=\int_{\k^{2d}} f_1(X-Y)\,f_2(Y)\,\overline\Psi(\tfrac2\theta[Y,X])\,dY.
 $$
 Moreover, we have for $f_1,f_2\in\CS(\k^{2d})$:
 $$
 \CG_\theta\big( f_1\ast_\theta f_2\big)=\CG_\theta(f_1)\star_\theta\CG_\theta(f_2),
 $$
 where $\CG_\theta$ denotes the rescaled version of the symplectic Fourier transform:
 $$
( \CG_\theta f)(X):=\big|\tfrac2\theta\big|^d\int_{\k^{2d}} \Psi(\tfrac2\theta[Y,X])\,f(Y)\,dY.
$$
Hence  $C_\theta^*(\k^{2d})$ is isomorphic to $C_0(\k^{2d})_\theta$  thus
(by Proposition \ref{normal})
isomorphic to the $C^*$-algebra of compact operators. However, this does not implies directly
that the $2$-cocycle $(X,Y)\mapsto 
 \Psi(\tfrac2\theta[Y,X])$ on the selfdual group $\k^{2d}$ is regular in the sense of \cite[Definition 2.9]{NT}.
Since here the modular involution $\hat J$ is the complex conjugation, regularity here means 
 that $C^*_\theta(\k^{2d}) C_0(\k^{2d})\subset \CK(L^2(\k^{2d}))$. But this is a trivial fact here since
 the operator kernel of $V^\theta_{f_1}\,M_{f_2}$ ($M_f$ stands for the operator of point-wise 
 multiplication by $f$), for $f_1,f_2\in \CS(\k^{2d})$ is given by $\overline\Psi(2[X,Y])
 f_1(X-Y)f_2(Y)$ and so $V^\theta_{f_1}\,M_{f_2}$ is Hilbert-Schmidt thus compact, and one concludes
 by density.

Following \cite{Nes-flat}, we  introduce  the  operator on $\CS(\k^{2d},A_{\rm reg})$ given by:
\begin{align}
\label{T}
\Pi_\theta f(X):=|2|_\k^d\int_{\k^{2d}}\overline\Psi(2[X,Y])\,\alpha_{\theta Y}\big(\CG(f)(Y)\big)\,dY
\,,\quad\theta\in\CO_\k.
 \end{align}
 This operator is continuous. Indeed, $\Pi_\theta=\CG\circ S_\theta\circ\CG$, where 
 $S_\theta f(X):=\alpha_{\theta X}\big(f(X)\big)$, which is continuous on $\CS(\k^{2d},A_{\rm reg})$
(essentially by Lemma \ref{DILL}). Note however that $S_\theta$ does not need to be continuous on
  $\CS(\k^{2d},A)$. Since $S_\theta$ is invertible with inverse given by $S_{-\theta}$, $\Pi_\theta$
  is also invertible with inverse $\Pi_{-\theta}$.
   Defining the faithful representations $\pi$ and $\pi_\theta$ of the crossed products 
    $\k^{2d}\ltimes_{\alpha} A$ and
 $\k^{2d}\ltimes_{\alpha_\theta} A_\theta$  on the 
 Hilbert module $L^2(\k^{2d},A)$ given for $f,\xi\in \CS(\k^{2d},A_{\rm reg})$ by:
 $$
 \pi(f)\xi:=\int_{\k^{2d}} \tilde\alpha\big(f(X)\big) \,\big(\tau_X\, \xi)\,dX,\qquad
  \pi_\theta(f)\xi:=\int_{\k^{2d}} \tilde\alpha\big(f(X)\big) \star_\theta\big(\tau_X\, \xi)\,dX.
 $$

The first main result of \cite{Nes-flat}  is that the (Archimedean version
of the) map $\Pi_\theta$ extends to an isomorphisms of crossed products. This is 
the most important step to prove  equivalence between
Rieffel's and Kasprzak's approaches to deformation. 
A quick inspection shows that the proofs of \cite[Theorems 1.1 \& 2.1]{Nes-flat} 
extend  to our context without modification, this yields:

 \begin{prop}
 \label{NES1}
 For $f\in\CS(\k^{2d},A_{\rm reg})$, we have $\pi_\theta(f)=\pi\big(\Pi_\theta(f)\big)$. Moreover,
 $\Pi_\theta$ extends to an isomorphism of  crossed products 
 $\k^{2d}\ltimes_{\alpha_\theta} A_\theta\simeq\k^{2d}\ltimes_{\alpha} A$.
 \end{prop}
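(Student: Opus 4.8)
The plan is to transport the proof of \cite[Theorems~1.1 \& 2.1]{Nes-flat} to the present setting; the only point requiring attention is that every integral manipulation there remains legitimate over $\k^{2d}$ and for $A$-valued integrands, and this turns out to be automatic. The heart of the matter is the pointwise identity $\pi_\theta(f)=\pi(\Pi_\theta f)$ for $f\in\CS(\k^{2d},A_{\rm reg})$. By bilinearity and continuity it suffices to treat $f=\vf\otimes a$ with $\vf\in\CS(\k^{2d})$ and $a\in A_{\rm reg}$. I would evaluate $\pi_\theta(f)\xi$ on a vector $\xi\in\CS(\k^{2d},A_{\rm reg})$ of the module $L^2(\k^{2d},A)$, and then at a point of $\k^{2d}$: on the left one feeds the integral form of the deformed product from Remark~\ref{morepower} into $\pi_\theta(f)\xi=\int_{\k^{2d}}\tilde\alpha(f(X))\star_\theta(\tau_X\xi)\,dX$, while on the right one unfolds $\Pi_\theta=\CG\circ S_\theta\circ\CG$ and uses the covariance of $\alpha$ to commute $\tau_X$ past $S_\theta$. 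After a change of variables, Fubini, and the inversion $\CG^2=\Id$ of the symplectic Fourier transform, the two sides collapse onto the same absolutely convergent multiple integral; equivalently, on the Fourier side the identity is exactly the compatibility $\CG_\theta(f_1\ast_\theta f_2)=\CG_\theta(f_1)\star_\theta\CG_\theta(f_2)$ of twisted convolution with the Moyal product, so the computation is formally identical to the $\R^{2d}$ case. The non-Archimedean nature of $\k$ enters only to guarantee absolute convergence, which is supplied by $\mu_0^{-N}\in L^1(\k^{2d})$ for $N>2d$, the decay built into $\CS(\k^{2d})$, and the oscillatory-integral and dilation estimates of Section~\ref{FDCA} (Lemmas~\ref{IRL}, \ref{DILL}, \ref{sequence}); in particular $S_\theta$, hence $\Pi_\theta$, is a continuous bijection of $\CS(\k^{2d},A_{\rm reg})$ with inverse $\Pi_{-\theta}$, as recorded before the statement, and this is precisely where the restriction $\theta\in\CO_\k$ is used, through Lemma~\ref{DILL}.

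Granting the identity, the crossed-product isomorphism follows formally. The representations $\pi$ of $\k^{2d}\ltimes_\alpha A$ and $\pi_\theta$ of $\k^{2d}\ltimes_{\alpha_\theta}A_\theta$ on $\CL(L^2(\k^{2d},A))$ are faithful, hence isometric with norm-closed ranges. Moreover $\CS(\k^{2d},A_{\rm reg})$ is dense in each crossed product: in $\k^{2d}\ltimes_\alpha A$ because $\CS(\k^{2d})$ is dense in $L^1(\k^{2d})$ and $A_{\rm reg}$ is dense in $A$ (Proposition~\ref{DAA}), and in $\k^{2d}\ltimes_{\alpha_\theta}A_\theta$ because $A_{\rm reg}$ is dense in $A_\theta$ by construction of the latter. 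Since $\Pi_\theta$ maps $\CS(\k^{2d},A_{\rm reg})$ bijectively onto itself, the identity $\pi_\theta(f)=\pi(\Pi_\theta f)$ forces $\pi_\theta$ and $\pi$ to have the same closed range in $\CL(L^2(\k^{2d},A))$; therefore $\pi^{-1}\circ\pi_\theta$ is an isometric $*$-isomorphism $\k^{2d}\ltimes_{\alpha_\theta}A_\theta\simeq\k^{2d}\ltimes_\alpha A$, and by the identity it restricts to $\Pi_\theta$ on $\CS(\k^{2d},A_{\rm reg})$. This is the asserted extension.

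The only genuine obstacle lies in the first step, and it is one of bookkeeping rather than of ideas: one must track several integration variables and the dilation by $\theta$ at once while justifying each interchange of integrals. Once convergence is under control the identity matches \cite{Nes-flat} line for line, and everything that follows is standard operator-algebra; in particular, the fact that $\Pi_\theta$ and $\Pi_{-\theta}$ preserve $\CS(\k^{2d},A_{\rm reg})$, which makes the functional-analytic conclusion work, is already contained in the discussion preceding the statement together with Lemma~\ref{DILL}.
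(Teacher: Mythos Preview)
Your proposal is correct and follows exactly the route the paper takes: the paper itself does not give an independent proof but simply observes that ``a quick inspection shows that the proofs of \cite[Theorems~1.1 \& 2.1]{Nes-flat} extend to our context without modification,'' and your write-up is precisely a fleshing-out of what that inspection entails. If anything, you supply more detail than the paper does---the reduction to elementary tensors, the role of $\Pi_\theta=\CG S_\theta\CG$ and its inverse $\Pi_{-\theta}$, the density arguments for the crossed products, and the faithfulness of $\pi$ and $\pi_\theta$---all of which are implicit in the paper's one-line citation.
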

From this,
one deduces exactly as \cite[Theorem 3.10]{K1} that $A_\theta$ is nuclear if and only if $A$ is nuclear.
Mimicking  the arguments of \cite[Theorem 3.2]{Ri2} (see also
 \cite[Corollary 7.49]{BG}),  one can also prove that
our deformed $C^*$-algebra $A_\theta$ is strongly Morita equivalent to the crossed product
$$
\k^{2d}\ltimes_{\Ad(U_\theta)\otimes\alpha}\big(\CK(L^2(\k^d))\otimes A\big),
$$
 where  $U_\theta$ is the projective unitary irreducible representation of $\k^{2d}$ on  $L^2(\k^d)$
given in \eqref{SCH}.
This gives an alternative proof of the property of preservation of nuclearity,
which is  how Rieffel proved the analogous result for actions of $\R^{2d}$
 \cite[Theorem 4.1]{Ri2}. Note  that Proposition \ref{NES1} together with the Stone-von Neumann
 Theorem (see e.g$.$ \cite[Theorem C.34]{RW}) also implies that 
 the deformed $C^*$-algebra is stably isomorphic to a double crossed product of the undeformed
 $C^*$-algebra (see \cite[Theorem 3.6]{NT} for a more general
  statement in the context of regular cocycles
 for locally compact quantum groups):
$$
\CK\otimes A_\theta\simeq\k^{2d}\ltimes_{\beta_\theta}\big(\k^{2d}\ltimes_\alpha A\big),
$$
where $\beta_\theta$ is the image under $\Pi_\theta$ of the  action  dual to $\alpha_\theta$ of the selfdual
group $\k^{2d}$ on the crossed product $\k^{2d}\ltimes_{\alpha_\theta} A_\theta$   
(which is not 
the  action dual to $\alpha$  on  $\k^{2d}\ltimes_{\alpha} A$). 

The deformed $C^*$-algebra   constructed in \cite{Nes-Shanga} is based on the  following
``quantization maps'':
$$
T_{\theta,\nu}:C_0(\k^{2d})\to C^*_\theta(\k^{2d}), \quad f\mapsto \Id\otimes\nu\big(W\,\overline\Psi_\theta(
\CG_\theta\, f\,\CG_\theta\otimes\id)\Psi_\theta W^*\big).
$$
Here the function 
$f$ is viewed as an operator of multiplication on $L^2(\k^{2d})$, $\Psi_\theta$ is the operator 
of multiplication by the function $\big[(X,Y)\mapsto\Psi(\tfrac2\theta[X,Y])\big]$ on $L^2(\k^{2d}\times
\k^{2d})$ and $W$ is the multiplicative unitary on $L^2(\k^{2d}\times\k^{2d})$
given by $W\xi(X,Y)=\xi(X,Y-X)$. 
Last, $\nu$ is an element 
of the predual of the von Neumann algebra generated by $C^*_\theta(\k^{2d})$ in $L^2(\k^{2d})$.
Given that $C^*_\theta(\k^{2d})\simeq\CK(L^2(\k^{2d}))$, $\nu$ is 
 of the form $\Tr(A.)$, for $A$  of trace-class on $L^2(\k^{2d})$.
For $f\in\CS(\k^{2d})$,
a computation shows that (up to a constant) 
$$
T_{\theta,\nu}(f)=\int_{\k^{2d}}(\CG_\theta f)(X)\,\overline{\nu(V_{-X}^\theta)}\,V_{-X}^\theta\,dX.
$$
 From this, we can easily show that the union of the images of the maps $T_{\theta,\nu}$  is dense in $C^*_\theta(\k^{2d})$
 (see also \cite[Lemma 3.2]{NT} for a more general statement). To simplify the discussion we assume
 that $2$ is invertible in $\CO_{\k}$ (if not, the formulas are slightly different but the conclusion is 
 unchanged).
Let $\gamma\in\k$
 such that $|\gamma|_\k>1$, define for $n\in\Z$, $\vf_n:=\chi_{\theta\gamma^{n}(\CO_\k
 \times\CO_ \k^o)^d}\in\CS(\k^{2d})$ and consider the element $\nu_n:=
 \big|\tfrac{\gamma^{n}}\theta\big|^{2d}_\k\langle \vf_n,.\vf_{-n}\rangle$.
 Then we have
 $$
  \nu_n(V_{-X}^\theta)= \big|\tfrac{\gamma^{n}}\theta\big|^{2d}_\k\,\langle \vf_n,V_{-X}^\theta\vf_{-n}\rangle=
 \big|\tfrac{\gamma^{2n}}\theta\big|^{d}_\k\,\CG_\theta\big(
  \vf_n\,\tau_{-X}(\vf_{-n})\big)(X).
  $$
An explicit computation then shows that
  $ \nu_n(V_{-X}^\theta)=\vf_n(X)$ and thus
  $$
T_{\theta,\nu_n}(f)=\int_{\gamma^n\theta(\CO_\k\times\CO_
 \k^o)^d}(\CG_\theta f)(X)\,V_{-X}^\theta\,dX,
$$
which, by dominated convergence, converges to $V^\theta_{\CG_\theta (f)}$ when $n\to\infty$. One concludes using the fact that
$\CG_\theta$ is an automorphism of $\CS(\k^{2d})$ (indeed $\CG_\theta$ is an involution).

The crucial observation
in \cite{Nes-Shanga} is that for any $C^*$-algebra $A$, the map $T_{\theta,\nu}$ extends to a map
$$
T_{\theta,\nu}:M\big(C_0(\k^{2d})\otimes A\big)\to M\big(C^*_\theta(\k^{2d})\otimes A\big),
$$
which is continuous on the unit ball for the strict topology. Composing this map with $\tilde
\alpha:A\to C_u(\k^{2d},A)\subset M( C_0(\k^{2d},A))$, we get a family of maps 
$$
A\to M\big(C^*_\theta(\k^{2d})\otimes A\big),\quad a\mapsto
 \Id\otimes\nu\big(W\,\overline\Psi_\theta(
\CG_\theta\, \tilde\alpha(a)\,\CG_\theta\otimes\id)\Psi_\theta W^*\big).
$$
By definition, the deformation of $A$ in the sense \cite{Nes-Shanga} is the sub-$C^*$-algebra
$A_\theta^{BNS}$ of $M( C_0(\k^{2d},A))$
 generated  by the images of the maps $T_{\theta,\nu}\circ\tilde\alpha$.
The action $\hat\alpha_\theta:=[Y\mapsto\Ad(\CG_\theta\,\tau_Y\,\CG_\theta)]$ of $\k^{2d}$ on
 $A_\theta^{BNS}$ induces a representation of the crossed product $\k^{2d}\ltimes_{\hat\alpha_\theta}
 A_\theta^{BNS}$
 on the Hilbert module $L^2(\k^{2d},A)$. By \cite[Theorem 3.9]{NT} (see also the detailed discussion
 in \cite[pages 4-5]{Nes-flat}) we have  $\k^{2d}\ltimes_{\hat\alpha_\theta}A_\theta^{BNS}=\CG_\theta\,(
 \k^{2d}\ltimes_\alpha A)\,\CG_\theta$. Hence, $\CG_\theta\,\hat\alpha_\theta(A_\theta^{BNS})\,\CG_\theta
 \subset M( \k^{2d}\ltimes_\alpha A)$. Consider last the extension of the map \eqref{T}
 to the multipliers of crossed products:
 $$
\Pi_\theta:M\big( \k^{2d}\ltimes_{\alpha_\theta} A_\theta\big)\to M\big(\k^{2d}\ltimes_{\alpha} A\big).
$$
The proof of \cite[Theorem 2.3]{Nes-flat} (which is mainly based on general crossed product arguments)
 extends straightforwardly   to our context and gives a third way
to realize our deformed $C^*$-algebra:
\begin{thm}
\label{TT}
The map $\Pi_\theta$ establishes an isomorphism of 
$A_\theta\simeq\tilde\alpha_\theta(A_\theta)\subset 
M\big( \k^{2d}\ltimes_{\alpha_\theta} A_\theta\big)$ with
$A_\theta^{BNS}\simeq\CG_\theta\,\hat\alpha_\theta(A_\theta^{BNS})\,\CG_\theta
 \subset M( \k^{2d}\ltimes_\alpha A)$.
\end{thm}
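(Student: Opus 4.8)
The plan is to follow, mutatis mutandis, the proof of \cite[Theorem 2.3]{Nes-flat}, whose only genuinely analytic ingredient in the Archimedean case is the crossed-product isomorphism of \cite[Theorem 2.1]{Nes-flat}; its $p$-adic counterpart is already available as Proposition \ref{NES1}. First I would use Proposition \ref{NES1} to extend $\Pi_\theta$ to an isomorphism of multiplier algebras $M(\k^{2d}\ltimes_{\alpha_\theta}A_\theta)\to M(\k^{2d}\ltimes_\alpha A)$, strictly continuous on bounded sets, which intertwines the canonical copies of $\k^{2d}$ and carries the action dual to $\alpha_\theta$ to the action $\beta_\theta$ on $\k^{2d}\ltimes_\alpha A$ introduced before the statement. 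Since the canonical image $\tilde\alpha_\theta(A_\theta)$ is precisely the Landstad algebra attached to the crossed product $\k^{2d}\ltimes_{\alpha_\theta}A_\theta$ together with its dual action, its image under $\Pi_\theta$ is the Landstad algebra of $\k^{2d}\ltimes_\alpha A$ relative to $\beta_\theta$.

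On the $A_\theta^{BNS}$ side, \cite[Theorem 3.9]{NT} --- whose proof carries over here, the $2$-cocycle $(X,Y)\mapsto\Psi(\tfrac2\theta[X,Y])$ on $\k^{2d}$ being regular, as checked above --- provides the identification $\k^{2d}\ltimes_{\hat\alpha_\theta}A_\theta^{BNS}=\CG_\theta\,(\k^{2d}\ltimes_\alpha A)\,\CG_\theta$, under which $A_\theta^{BNS}$ becomes the subalgebra $\CG_\theta\,\hat\alpha_\theta(A_\theta^{BNS})\,\CG_\theta$ of $M(\k^{2d}\ltimes_\alpha A)$, that is, the Landstad algebra of $\k^{2d}\ltimes_\alpha A$ for the action obtained by transporting the action dual to $\hat\alpha_\theta$ along this identification and conjugating by $\CG_\theta$. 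The key step is to verify that this transported action is exactly $\beta_\theta$; granting this, uniqueness of the Landstad algebra (as used in \cite{K1} and \cite{NT}) forces $\Pi_\theta(\tilde\alpha_\theta(A_\theta))=\CG_\theta\,\hat\alpha_\theta(A_\theta^{BNS})\,\CG_\theta$, and since $\tilde\alpha_\theta$ is faithful and $\Pi_\theta$ isometric, this yields the claimed isomorphism $A_\theta\simeq A_\theta^{BNS}$. Alternatively, one can pin down the equality on generators: for $a\in A_{\rm reg}$ and the functionals $\nu_n$ displayed above, the formula $T_{\theta,\nu_n}(f)=\int_{\gamma^n\theta(\CO_\k\times\CO_\k^o)^d}(\CG_\theta f)(X)\,V_{-X}^\theta\,dX$, the relation $V_{f_1}^\theta V_{f_2}^\theta=V_{f_1\ast_\theta f_2}^\theta$ and the compatibility $\CG_\theta(f_1\ast_\theta f_2)=\CG_\theta(f_1)\star_\theta\CG_\theta(f_2)$ together show that $\CG_\theta\,(T_{\theta,\nu_n}\circ\tilde\alpha)(a)\,\CG_\theta$ converges, strictly as $n\to\infty$, to the operator of $\star_\theta$-left multiplication by $\tilde\alpha(a)$ on $L^2(\k^{2d},A)$, which by $\pi_\theta=\pi\circ\Pi_\theta$ (Proposition \ref{NES1}) is exactly $\Pi_\theta(\tilde\alpha_\theta(a))$; density of the images of the maps $T_{\theta,\nu}$ in $C_\theta^*(\k^{2d})$ and of $A_{\rm reg}$ in $A_\theta$ then promotes this to the equality of the two $C^*$-subalgebras.

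The main obstacle is not conceptual but organizational and analytic. One has to apply $T_{\theta,\nu}$ to the multiplier $\tilde\alpha(a)$ rather than to a genuine element of $C_0(\k^{2d},A)$ or $L^1(\k^{2d})$, so that every identity must be routed through the strict topology of the appropriate multiplier algebras; and one must keep track of the several actions involved --- the one dual to $\alpha_\theta$ and its image $\beta_\theta$, the action $\hat\alpha_\theta$ and its dual --- together with the two distinct copies of $\k^{2d}$ sitting inside $M(\k^{2d}\ltimes_\alpha A)$. All the nontrivial analytic inputs are, however, already in place: the continuity of $\Pi_\theta=\CG\circ S_\theta\circ\CG$ on $\CS(\k^{2d},A_{\rm reg})$ (via Lemma \ref{DILL}), the fact that $(\CS(\k^{2d},A),\star_\theta)$ is an ideal of $(\CB(\k^{2d},A),\star_\theta)$ (Lemma \ref{SIB}), the compatibility of $\CG_\theta$ with $\ast_\theta$ and $\star_\theta$, the regularity of the cocycle, and Proposition \ref{NES1} itself. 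Once these are invoked, the rest of \cite[Theorem 2.3]{Nes-flat} uses only generalities about crossed products and multiplier algebras, and transcribes verbatim.
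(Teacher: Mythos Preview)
Your proposal is correct and takes essentially the same approach as the paper: the paper's own proof consists of the single sentence that ``the proof of \cite[Theorem 2.3]{Nes-flat} (which is mainly based on general crossed product arguments) extends straightforwardly to our context,'' and you have spelled out precisely how --- via Proposition \ref{NES1}, the regularity of the cocycle, and the Landstad-algebra characterization --- this transcription goes through. Your additional explicit computation on generators via the $T_{\theta,\nu_n}$ is more than the paper provides, but is consistent with and complements the crossed-product argument.
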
  

\section{Properties of the deformation}

In this final section we always assume that the characteristic of $\k$ is different from $2$
but  otherwise specified,
the deformation parameter $\theta$ can be freely chosen in $\CO_\k$ (i.e$.$
the value $\theta=0$ is also allowed).  Our aim is to show that most of the structural properties of the 
deformation survive in the non-Archimedean context. In order to give the shortest possible
proofs, we take advantages of the three
different ways to realize our deformed $C^*$-algebra: as a subalgebra of $\CB(L^2(\k^d))\otimes
_{\rm min} A$ as initially
 defined (see subsection \ref{1}), as a subalgebra of bounded adjointable $A$-linear
endomorphisms of the Hilbert module $L^2(\k^{2d},A)$ (see subsection \ref{2}) or as a subalgebra
of $M\big(C^*_\theta(\k^{2d})\otimes A\big)$ (see subsection \ref{3}). But it is important to mention 
that all the results that use the twisted group algebra approach can be alternatively proven by
 methods similar to those developed in \cite{Ri,Ri2}.

\quad

We first study the question of  approximate unit. This result is a very important technical tool in 
numerous forthcoming statements. Here, we have to substantially modify Rieffel's original arguments
and thus we provide a rather detailed proof (this is mostly due to the fact that the operator $J$ does not
satisfy any kind of Leibniz rule).
\begin{prop}
\label{BAU}
The deformed $C^*$-algebra $A_\theta$ possesses an approximate unit (in the sense of \cite{Pe})
consisting of elements of $A_{\rm reg}$. 
\end{prop}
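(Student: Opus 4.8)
The plan is to follow the two‑step pattern of Rieffel — exhibit inside $A_{\rm reg}$ a bounded two‑sided approximate identity for $(A_{\rm reg},\star_\theta^\alpha)$ and then standardize it — but, crucially, to build the approximating net by a \emph{coarse}-graining rather than a fine-graining; this is exactly what compensates for the absence of a Leibniz rule for $J$. Fix the compact open subgroup $M:=(\tfrac14\CO_\k)^d\times(\tfrac14\CO_\k^o)^d$ of $\k^{2d}$, set $\chi_M:=\mathrm{vol}(M)^{-1}\mathbf 1_M$, choose an increasing approximate unit $(u_i)$ of positive contractions of $A$, and put
\[
e_i:=\alpha_{\chi_M}(u_i)=\tfrac1{\mathrm{vol}(M)}\int_M\alpha_y(u_i)\,dy .
\]
By Proposition \ref{DAA} each $e_i$ belongs to $A^M\subset A^\infty\subset A_{\rm reg}$ and is self-adjoint for the (unchanged, cf. Theorem \ref{TDF}) involution, and since the unit of $M(A)$ is $\alpha$-fixed the net converges strictly to it: for $b\in A$, $\|e_ib-b\|\le\sup_{y\in M}\|\alpha_y(u_i)b-b\|\to 0$ (the set $\{\alpha_{-y}(b):y\in M\}$ is compact and an approximate unit converges uniformly on compacta), and likewise on the right.

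The heart of the matter, and the step I expect to be the real obstacle, is that $(e_i)$ is bounded in $\|\cdot\|_\theta$ — here Rieffel's Leibniz-based control is unavailable, but the coarseness of $M$ replaces it. Since $e_i\in A^M$, the orbit map $\tilde\alpha(e_i)$ is $M$-periodic, so $\CG\tilde\alpha(e_i)$ is supported in $\{X:[M,X]\subset\tfrac12\CO_\k^o\}$; a direct computation shows this set is contained in $(2\CO_\k)^d\times(2\CO_\k^o)^d$, on which $\mu_0\equiv 1$ (using $|4|_\k\le 1$). Hence $I^n$ acts as the identity on $\CG\tilde\alpha(e_i)$, i.e. $J^n\tilde\alpha(e_i)=\tilde\alpha(e_i)$ for every $n$, and similarly $J^n_\theta\tilde\alpha(e_i)=\tilde\alpha(e_i)$ (because $\theta\in\CO_\k$ maps that lattice into itself). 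Therefore $\mathfrak P^A_n(\tilde\alpha(e_i))=\|e_i\|_A\le 1$ for all $n$, and Bechata's Calderón–Vaillancourt bound (Proposition \ref{CV}) gives
\[
\|e_i\|_\theta=\big\|{\bf W}_\theta^A(\tilde\alpha(e_i))\big\|\le\|\mu_0^{-2d-1}\|_1 ,
\]
uniformly in $i$. In short, averaging over a \emph{fixed} lattice coarse enough that $\mu_0$ is trivial on its dual forces $J$ (and $J_\theta$) to act trivially on the orbit map, which is precisely the analytic input Rieffel obtained from the Leibniz rule.

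Next one checks $e_i\star_\theta^\alpha a\to a$ and $a\star_\theta^\alpha e_i\to a$ in $\|\cdot\|_\theta$ for $a$ in the dense subalgebra $A^\infty$ (Proposition \ref{DAA}). Write $a\in A^{N_a}$; by equivariance of $\star_\theta^\alpha$ (Theorem \ref{TDF}) both $e_i\star_\theta^\alpha a$ and $a$ lie in $A^N$ with $N:=M\cap N_a$. On the subspace of $N$-periodic functions $J^{2d+1}$ acts as convolution by $\CG(\mu_0^{2d+1}\mathbf 1_{N^{[}})$, which lies in $L^1(\k^{2d})$ because $N^{[}:=\{X:[N,X]\subset\tfrac12\CO_\k^o\}$ is compact, so $\mu_0^{2d+1}\mathbf 1_{N^{[}}\in\CD(\k^{2d})$; hence $\mathfrak P^A_{2d+1}(\tilde\alpha(b))\le C_N\|b\|_A$ and $\|b\|_\theta\le\|\mu_0^{-2d-1}\|_1\,C_N\|b\|_A$ for $b\in A^N$. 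Thus it suffices to prove $\|e_i\star_\theta^\alpha a-a\|_A\to 0$, and this follows from the integral representation \eqref{OM}: using $J_\theta^{2d+1}\tilde\alpha(e_i)=\tilde\alpha(e_i)$ and the identity $\mathbf 1\star_\theta\tilde\alpha(a)(0)=a$ (the constant function $\mathbf 1$ is the $\star_\theta$-unit of $\CB(\k^{2d},A)$, as one checks from Lemma \ref{lem1}), one gets
\[
e_i\star_\theta^\alpha a-a=|2|_\k^{2d}\!\int\overline\Psi(2[X,Y])\,\mu_0^{-2d-1}(X)\,\mu_0^{-2d-1}(Y)\,\big(\alpha_{\theta X}(e_i)-1\big)\,\big(J^{2d+1}\tilde\alpha(a)\big)(Y)\,dXdY,
\]
whose integrand tends to $0$ pointwise (strict convergence $\alpha_{\theta X}(e_i)\to 1$) and is dominated by the integrable function $2\,\mathfrak P^A_{2d+1}(\tilde\alpha(a))\,\mu_0^{-2d-1}(X)\mu_0^{-2d-1}(Y)$; dominated convergence concludes. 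The right-hand estimate is symmetric.

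Finally, $(e_i)$ is a bounded, self-adjoint, two-sided approximate identity for $A_\theta$ contained in $A_{\rm reg}$. The standard construction of \cite[1.4]{Pe}, together with the stability of $A_{\rm reg}$ under the holomorphic functional calculus of $A_\theta$ (a routine property of the algebra of $\CB$-regular vectors, established as in \cite{Ri,BG}), then upgrades it to an approximate unit of positive contractions still lying in $A_{\rm reg}$, which is the assertion of the proposition.
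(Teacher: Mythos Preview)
Your argument is correct and genuinely different from the paper's.  The paper smooths an approximate unit $(e'_\lambda)$ of $A$ by a fixed but otherwise arbitrary $0\le\vf\in\CS(\k^{2d})$ with compact support and $\int\vf=1$, setting $e_\lambda=\alpha_\vf(e'_\lambda)$; to control $\mathfrak P^A_{2d+1}\big(\tilde\alpha(a)-\tilde\alpha(a\star_\theta^\alpha e_\lambda)\big)$ it then unfolds the integral representation \eqref{PPP} for $J^n(F_1\star_\theta F_2)$ twice, producing an eightfold integral that is split into two pieces $I_1^\lambda$, $I_2^\lambda$ and treated by separate compactness arguments over the support of $\vf$.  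Your choice $\vf=\chi_M$ with $M=(\tfrac14\CO_\k)^d\times(\tfrac14\CO_\k^o)^d$ is far more efficient: the $M$-periodicity of $\tilde\alpha(e_i)$ forces its symplectic Fourier transform to live where $\mu_0\equiv1$, so $J^n$ and $J_\theta^n$ act as the identity on $\tilde\alpha(e_i)$ and the uniform $\|\cdot\|_\theta$-bound is immediate from Proposition~\ref{CV}.  Passing first through the dense subspace $A^\infty$ and using that on each fixed-point algebra $A^N$ the seminorm $\mathfrak P^A_{2d+1}\circ\tilde\alpha$ is dominated by a constant times $\|\cdot\|_A$ is a further simplification the paper does not exploit.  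Your argument is the natural ultrametric substitute for Rieffel's Leibniz control; the paper's argument is more hands-on but works directly on all of $A_{\rm reg}$ without the detour through $A^\infty$.

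Two remarks.  Your dominated-convergence step is stated for a net, where DCT is not available; the cure is the standard split into a large compact region (on which the range of $Y\mapsto(J^{2d+1}\tilde\alpha(a))(Y)$ is compact, so the approximate unit converges uniformly there) and a tail that is small uniformly in $i$ --- this is exactly how the paper handles its $I_1^\lambda$.  For the final upgrading, stability of \emph{all} of $A_{\rm reg}$ under the holomorphic functional calculus of $A_\theta$ is not established anywhere and is not obviously ``routine'' (here $A_{\rm reg}\supsetneq A^\infty$, so the usual smooth-vector argument does not apply).  The clean justification in your setting is rather that each $e_i$ already lies in $A^M$, which by equivariance of $\star_\theta^\alpha$ and your norm comparison is a \emph{closed} $*$-subalgebra of $A_\theta$; hence the continuous functional calculus applied to $e_i$ inside $A_\theta$ stays in $A^M\subset A_{\rm reg}$.  (The paper's own proof, incidentally, stops at a bounded self-adjoint two-sided approximate identity --- see Remark~\ref{BAUF} --- and does not address positivity or contractivity in $A_\theta$ either.)
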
 
\begin{proof}
Let $\{e'_\lambda\}_{\lambda\in\Lambda}$ be a net of  approximate unit for $A$ ($e'_\lambda\in A_+$,
$\|e'_\lambda\|_A\leq 1$, $\lim_\lambda\|a-e'_\lambda a\|_A=0$, $\lim_\lambda\|a-ae'_\lambda \|_A=0$ for all 
$a\in A$)
 and let 
$0\leq\vf\in\CS(\k^{2d})$ with compact support and with $\int\vf=1$. Define then 
$$
e_\lambda:=\alpha_\vf(e'_\lambda)=\int_{\k^{2d}}\vf(X)\,\alpha_X(e'_\lambda)\,dX\in A_{\rm reg}.
$$
Since for all $a\in A$, we have
$$
\|a-e_\lambda a\|_A\leq \int_{\k^{2d}}\vf(X)\,\|a-\alpha_X(e'_\lambda)a\|_A\,dX=
\int_{\k^{2d}}\vf(X)\,\|\alpha_{-X}(a)-e'_\lambda\alpha_{-X}(a)\|_A\,dX,
$$
and similarly for $\|a-ae_\lambda \|$, a compactness argument over the support of $\vf$ entails that
$\{e_\lambda\}_{\lambda\in\Lambda}$ is an  approximate unit for the undeformed $C^*$-algebra
 $A$ consisting of elements of regular elements. 

We are going to prove  that $\{e_\lambda\}_{\lambda\in\Lambda}$ is also an approximate unit
for the deformed  $C^*$-algebra $A_\theta$. Since $A_{\rm reg}$ is dense in $A_\theta$, it suffices to prove that 
$\|a-a\star_\theta^\alpha e_\lambda \|_\theta$ and $\|a-e_\lambda\star_\theta^\alpha a\|_\theta$ go to zero for all $a\in A_{\rm reg}$.  By Proposition
\ref{CV} $\|.\|_\theta\leq C\|.\|_{2d+1}$ (on $A_{\rm reg}$) and thus it suffices to show that
$\mathfrak P^A_{2d+1}\big(\tilde\alpha(a-a\star_\theta^\alpha e_\lambda)\big)$ and 
$\mathfrak P^A_{2d+1}\big(\tilde\alpha(a-e_\lambda 
\star_\theta^\alpha a)\big)$ go to zero for all $a\in A_{\rm reg}$. For this, note that  for $
F_1,F_2\in\CB(\k^{2d},A)$,
we have
\begin{align*}
J^n(F_1\star_\theta F_2)= |2|_\k^{2d}
\int_{\k^{2d}\times\k^{2d}} \overline\Psi\big(2[Y,Z]\big)\,
\mu_0^{-2d-1}(Y)\,\mu_0^{-2d-1}(Z)\,J^n\big(\tau_{\theta Y}\big(J_\theta^{2d+1}F_1\big)\,
\tau_Z\big(J^{2d+1}F_2\big)\big)
\,dY\,dZ.
\end{align*}
Using the integral formula \eqref{PPP} applied to $J^n\big(\tau_{\theta Y}\big(J_\theta^{2d+1}F_1\big)\,
\tau_Z\big(J^{2d+1}F_2\big)\big)$, we deduce by commutativity of $J$ and $\tau$
and with $N=2d+1+n$:
\begin{align*}
&J^n(F_1\star_\theta F_2)(X)=|2|_\k^{6d}\int\overline\Psi\big(2[Y,Z]\big)
\overline\Psi\big(2[X,Y_1-Z_1+Y_2-Z_2]\big)
\overline\Psi\big(2[Y_1,Z_1]\big)\overline\Psi\big(2[Y_2,Z_2]\big)\\
&\times
\mu_0^{-2d-1}(Y)\,\mu_0^{-2d-1}(Z)\mu_0^n(Y_1-Z_1+Y_2-Z_2)
\mu_0^{-N}(Y_1-X)\mu_0^{-N}(Z_1-X)\mu^{-N}_0(Y_2-X)\mu^{-N}_0(Z_2-X)\\
&\times
\big(J^{N}J_\theta^{2d+1}F_1\big)(Y_1+\theta Y)\big(J^{N+2d+1}F_2\big)(Y_2+Z)
\,dYdZdY_1dZ_1dY_2dZ_2.
\end{align*}
On the other hand,  by Lemma \ref{IRL},  we have for all $F\in\CB(\k^{2d},A)$:
\begin{align*}
J^nF= |2|_\k^{2d}
\int_{\k^{2d}\times\k^{2d}} \overline\Psi\big(2[Y,Z]\big)\,
\mu_0^{-2d-1}(Y)\,\mu_0^{-2d-1}(Z)\,J^n\big(\tau_{\theta Y}\big(J_\theta^{2d+1}F\big)\big)
\,dY\,dZ.
\end{align*}
But the integral formula \eqref{PPP} generalizes    when $F_1\in\CB(\k^{2d},A)$
and $F_2\in\CB(\k^{2d},M(A))$. Applying it for $F_2=1$, 
 one deduces
\begin{align*}
&J^nF(X)=|2|_\k^{6d}\int \overline\Psi\big(2[Y,Z]\big)
\overline\Psi\big(2[X,Y_1-Z_1+Y_2-Z_2]\big)
\overline\Psi\big(2[Y_1,Z_1]\big)\overline\Psi\big(2[Y_2,Z_2]\big)\\
&\times
\mu_0^{-2d-1}(Y)\,\mu_0^{-2d-1}(Z)\mu_0^n(Y_1-Z_1+Y_2-Z_2)
\mu_0^{-N}(Y_1-X)\mu_0^{-N}(Z_1-X)\mu^{-N}_0(Y_2-X)\mu^{-N}_0(Z_2-X)\\
&\times
\big(J^{N}J_\theta^{2d+1}F_1\big)(Y_1+\theta Y)\,dYdZdY_1dZ_1dY_2dZ_2.
\end{align*}

These observations imply that for all $a\in A_{\rm reg}$, $n\in\N$ and with $N=n+2d+1$, we have
\begin{align*}
&J^n\big(\tilde\alpha(a)-\tilde\alpha(a\star_\theta^\alpha e_\lambda)\big)(X)=J^n\big(\tilde\alpha(a)-
\tilde\alpha(a)\star_\theta
\tilde\alpha(e_\lambda)\big)(X)\\
&=|2|_\k^{6d}\int dYdZdY_1dZ_1dY_2dZ_2\,
\overline\Psi\big(2[Y,Z]\big)
\overline\Psi\big(2[X,Y_1-Z_1+Y_2-Z_2]\big)
\overline\Psi\big(2[Y_1,Z_1]\big)\overline\Psi\big(2[Y_2,Z_2]\big)\\
&\times
\mu_0^{-2d-1}(Y)\,\mu_0^{-2d-1}(Z)\mu_0^n(Y_1-Z_1+Y_2-Z_2)
\mu_0^{-N}(Y_1-X)\mu_0^{-N}(Z_1-X)\mu^{-N}_0(Y_2-X)\mu^{-N}_0(Z_2-X)\\
&\times\Big(\big(J^{N}J_\theta^{2d+1}\tilde\alpha(a)\big)(Y_1+\theta Y)-
\big(J^{N}J_\theta^{2d+1}\tilde\alpha(a)\big)(Y_1+\theta Y)\big(J^{N+2d+1}\tilde\alpha(e_\lambda)\big)
(Y_2+Z)\Big).
\end{align*}
Using the Peetre inequality, the fact that the action $\alpha$ is isometric  and the 
(almost tautological) relation
$$
\big(J\tilde\alpha(a)\big)(X)=\alpha_X\big(J\tilde\alpha(a)(0)\big),
$$
we deduce 
\begin{align*}
&\mathfrak P^A_n\big(\tilde\alpha(a)-\tilde\alpha(a\star_\theta^\alpha e_\lambda)\big)\leq
\sup_{X\in\k^{2d}}
\int 
\mu_0^{-2d-1}(Y)\,\mu_0^{-2d-1}(Z)
\mu_0^{-2d-1}(Y_1-X)\\
&\times\mu_0^{-2d-1}(Z_1-X)\mu^{-2d-1}_0(Y_2-X)\mu^{-2d-1}_0(Z_2-X)
\Big\|\big(J^{n+2d+1}J_\theta^{2d+1}\tilde\alpha(a)\big)(Y_1+\theta Y-Y_2-Z)\\
&-
\big(J^{n+2d+1}J_\theta^{2d+1}\tilde\alpha(a)\big)(Y_1+\theta Y-Y_2-Z)\big(J^{n+4d+2}\tilde\alpha(e_\lambda)
\big)(0)\Big\|_A \,dYdZdY_1dZ_1dY_2dZ_2,
\end{align*}
Performing the translations $Y_j\mapsto Y_j+X$, $Z_j\mapsto Z_j+X$, we see that the integral
above does not depend on $X$. Performing the translation $Y_1\mapsto Y_1-\theta Y+Y_2+Z$
and using Fubini, we see that the integral above is of the form
\begin{align}
\label{AB}
\int_{\k^{2d}} \Phi(X) 
\big\|\big(J^{n+2d+1}J_\theta^{2d+1}\tilde\alpha(a)\big)(X)-
\big(J^{n+2d+1}J_\theta^{2d+1}\tilde\alpha(a)\big)(X)\big(J^{n+4d+2}\tilde\alpha(e_\lambda)
\big)(0)\big\|_A\,dX,
\end{align}
where $0\leq\Phi\in L^1(\k^{2d})$.

Next, we estimate the integral \eqref{AB} as a sum of two terms
\begin{align*}
I_1^\lambda&:=\int_{\k^{2d}} \Phi(X) 
\big\|\big(J^{n+2d+1}J_\theta^{2d+1}\tilde\alpha(a)\big)(X)-
\big(J^{n+2d+1}J_\theta^{2d+1}\tilde\alpha(a)\big)(X)\,e_\lambda\big\|_A\,dX,\\
I_2^\lambda&:=\int_{\k^{2d}} \Phi(X) 
\big\|
\big(J^{n+2d+1}J_\theta^{2d+1}\tilde\alpha(a)\big)(X)\big((J^{n+4d+2}-{\Id})\tilde\alpha(e_\lambda)
\big)(0)\big\|_A\,dX.
\end{align*}
Let now $C_n$ be the ball in $\k^{2d}\times\k^{2d}$ centered in $0$ and of radius $n$.
By absolute convergence (in norm) of the integral $I_1^\lambda$ 
and since $\|e_\lambda\|_A\leq\|\vf\|_1\|e'_\lambda\|_A\leq 1$, we deduce that
for each $\eps>0$ there exists $n_0\in\N$ (independent of $\lambda$) such that for all $n\geq n_0$ we have 
$$
\int_{\k^{2d}\setminus C_n} \Phi(X) 
\big\|\big(J^{n+2d+1}J_\theta^{2d+1}\tilde\alpha(a)\big)(X)-
\big(J^{n+2d+1}J_\theta^{2d+1}\tilde\alpha(a)\big)(X)\,e_\lambda\big\|_A\,dX\leq\eps.
$$
On the other hand, by a compactness argument and since 
$\{e_\lambda\}_{\lambda\in\Lambda}$ is an approximate unit, 
it easily follows that for any $n\in\N$, we have
$$
\lim_\lambda \int_{ C_n}\Phi(X) 
\big\|\big(J^{n+2d+1}J_\theta^{2d+1}\tilde\alpha(a)\big)(X)-
\big(J^{n+2d+1}J_\theta^{2d+1}\tilde\alpha(a)\big)(X)\,e_\lambda\big\|_A\,dX=0,
$$
hence $\lim_\lambda I_1^\lambda=0$. For the second bit, we first come back to the definition
of $e_\lambda\in A_{\rm reg}$ in terms of $e'_\lambda\in A$, to get
$$
(J^{N}-1)\tilde\alpha(e_\lambda)(0)=\int_{\k^{2d}}(J^{N}-1)\vf(Y) \,\alpha_Y(e'_\lambda)\,dY.
$$  
Since moreover $\int_{\k^{2d}}(J^{N}-1)\vf(Y) \,dY=0$, we get for any $b\in A$
$$
b\,(J^{N}-1)\tilde\alpha(e_\lambda)(0)=\int_{\k^{2d}}(J^{N}-1)\vf(Y) \,\big(b\,\alpha_Y(e'_\lambda)-b\big)\,dY.
$$ 
Applying this formula to $b=\big(J^{n+2d+1}J_\theta^{2d+1}\tilde\alpha(a)\big)(X)$, we get the bound
\begin{align*}
I_2^\lambda&\leq\int_{\k^{2d} \times \k^{2d}}\Phi(X)\big|
(J^{N}-1)\vf(Y)\big|\\
&\qquad\qquad\qquad\times
\big\|\big(J^{n+2d+1}J_\theta^{2d+1}\tilde\alpha(a)\big)(X)\,
\alpha_Y(e'_\lambda)-\big(J^{n+2d+1}J_\theta^{2d+1}\tilde\alpha(a)\big)(X)\big\|_A\,dX\,dY.
\end{align*}
Using one more time the isometricity of the action, we finally get,
\begin{align*}
I_2^\lambda&\leq\int_{\k^{2d} \times \k^{2d}}\Phi(X)\big|
(J^{N}-1)\vf(Y)\big|\\
&\qquad\qquad\qquad\times
\big\|\big(J^{n+2d+1}J_\theta^{2d+1}\tilde\alpha(a)\big)(X-Y)\,
e'_\lambda-\big(J^{n+2d+1}J_\theta^{2d+1}\tilde\alpha(a)\big)(X-Y)\big\|_A\,dX\,dY.
\end{align*}
Noting that   $(J^{N}-1)\vf\in\CS(\k^{2d})$, which can be approximated in $\CD(\k^{2d})$,
we conclude that $\lim_\lambda I_2^\lambda=0$ with the same compactness argument than the
one we used for $I_1^\lambda$.
 The case of  $a-e_\lambda \star_\theta^\alpha a$ is entirely similar.
 \end{proof}
In particular, we deduce that if $A$ is $\sigma$-unital, so does $A_\theta$ and, thanks to 
Theorem \ref{INV}, we get that $A$ is $\sigma$-unital if and only if $A_\theta$ is. 

\begin{rmk}
\label{BAUF}
The proof of the proposition above gives the existence of a bounded approximate
unit for the Fr\'echet algebra $(A_{\rm reg},\star_\theta^\alpha)$, in the sense that for all $n\in\N$,
$\sup_{\lambda\in\Lambda}\|e_\lambda\|_n<\infty$  and for all $a\in A_{\rm reg}$,
$\lim_{\lambda}\|a\star_\theta e_\lambda-a\|_n=\lim_{\lambda}\|e_\lambda\star_\theta a-a\|_n=0$.
\end{rmk}

We next study the question of compatibility of the deformation with ideals and morphisms.
The following two results   follow from  minor  modifications of the
similar statements in \cite{Ri}. (See \cite[Proposition 3.8]{K1} for an alternative proof of
Proposition \ref{InjSurj}.)
\begin{prop}
\label{InjSurj}
Let $(A,\alpha)$ and $(B,\beta)$ be two $C^*$-algebras endowed with  
continuous actions of $\k^{2d}$ and let  $T:A\to B$ be a $*$-homomorphism
which intertwines the actions $\alpha$ and $\beta$. Then,
 $T$ maps $A_{\rm reg}$ to $B_{\rm reg}$ and extends to a continuous homomorphism
$T_\theta:A_\theta\to B_\theta$ which intertwines the actions $\alpha_\theta$ and $\beta_\theta$.  If
 moreover $T$ is injective (respectively surjective) then $T_\theta$ is injective (respectively  surjective) too.
\end{prop}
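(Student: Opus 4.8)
The plan is to transcribe the corresponding arguments of \cite{Ri} into the present setting, using the concrete realizations of the deformed $C^*$-norm obtained above; I would organise it in four steps.

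\emph{Step 1 (descent to the Fréchet level).} Since $T$ intertwines the actions, the map it induces on $A$-valued distributions, written $\Id\otimes T\colon\CS'(\k^{2d},A)\to\CS'(\k^{2d},B)$, satisfies $(\Id\otimes T)(\tilde\alpha(a))=\tilde\beta(T(a))$ for every $a\in A$. As $\Id\otimes T$ commutes with all the operators $J^n$ (which act only on the $\k^{2d}$-variable), with the translations $\tau_X$, and with point-wise multiplication, and as $T$ is contractive, one gets $\mathfrak P_n^B(\tilde\beta(T(a)))\le\mathfrak P_n^A(\tilde\alpha(a))$; hence $T(A_{\rm reg})\subseteq B_{\rm reg}$ and $T\colon A_{\rm reg}\to B_{\rm reg}$ is continuous with $\|T(a)\|_n^B\le\|a\|_n^A$. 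Applying $\Id\otimes T$ to the intertwining relation \eqref{OOO} for $\star_\theta$ on $\CB(\k^{2d},\cdot)$ and evaluating at $0$ gives $T(a\star_\theta^\alpha b)=T(a)\star_\theta^\beta T(b)$; compatibility with the involutions is immediate. So $T$ is a continuous $*$-homomorphism $(A_{\rm reg},\star_\theta^\alpha,*)\to(B_{\rm reg},\star_\theta^\beta,*)$.

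\emph{Step 2 (the $C^*$-extension).} I would realise $A\subseteq\CB(\CH_A)$ and $B\subseteq\CB(\CH_B)$ and use $\|a\|_\theta=\|{\bf W}_\theta^A(\tilde\alpha(a))\|$ from subsection \ref{1}. The homomorphism $T$ likewise induces a $*$-homomorphism $\Id\otimes T\colon\CK(L^2(\k^d))\otimes A\to\CK(L^2(\k^d))\otimes B$ and, exactly as in \cite{Ri}, a canonical extension of it to the relevant multiplier algebras; since $T$ passes through the scalar-weighted weak integral defining ${\bf W}_\theta$ (because $W^\theta_{X,Y}$ is $\C$-valued), one has $(\Id\otimes T)\big({\bf W}_\theta^A(\tilde\alpha(a))\big)={\bf W}_\theta^B(\tilde\beta(T(a)))$. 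A $*$-homomorphism of $C^*$-algebras being contractive, $\|T(a)\|_\theta\le\|a\|_\theta$, so $T$ extends to a continuous $*$-homomorphism $T_\theta\colon A_\theta\to B_\theta$; and $T_\theta$ intertwines $\alpha_\theta$ and $\beta_\theta$ because $T\circ\alpha_X=\beta_X\circ T$ on the dense subalgebra $A_{\rm reg}$ (Proposition \ref{ac}). \textbf{This step is where the real care is needed}: the only non-routine point is the standard but delicate bookkeeping around the canonical extension of the (possibly degenerate) homomorphism $\Id\otimes T$ to multiplier algebras and the verification that it sends the realization of $A_\theta$ into that of $B_\theta$; here I would simply note that it is identical to the corresponding passage in \cite{Ri}.

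\emph{Step 3 (surjectivity).} Suppose $T$ is onto. Given $b\in B$, pick $a_0\in A$ with $T(a_0)=b$; for $\vf\in\CD(\k^{2d})$ the regularization $\alpha_\vf(a_0):=\int_{\k^{2d}}\vf(X)\,\alpha_X(a_0)\,dX$ lies in $A^\infty\subseteq A_{\rm reg}$ (Proposition \ref{DAA}) and $T(\alpha_\vf(a_0))=\beta_\vf(b)$. By the Dixmier--Malliavin theorem, as invoked in the proof of Proposition \ref{DAA}, the finite linear span of such $\beta_\vf(b)$ equals $B^\infty$, which is dense in $B_{\rm reg}$ and hence in $B_\theta$. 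Thus $T(A_{\rm reg})\subseteq T_\theta(A_\theta)$ is dense in $B_\theta$; since the image of a $*$-homomorphism of $C^*$-algebras is closed, $T_\theta(A_\theta)=B_\theta$.

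\emph{Step 4 (injectivity).} If $T$ is injective it is isometric, so we may identify $A$ with a $\beta$-invariant closed $*$-subalgebra of $B$; an easy check gives $A_{\rm reg}^\alpha=A\cap B_{\rm reg}^\beta$. Since $\|\cdot\|_\theta$ does not depend on the chosen faithful representation (for instance by the $C^*$-module description of Proposition \ref{CMA}), we may compute it for $A$ inside a faithful representation of $B$; then for $a\in A_{\rm reg}$ the operators ${\bf W}_\theta^A(\tilde\alpha(a))$ and ${\bf W}_\theta^B(\tilde\beta(a))$ literally coincide, whence $\|a\|_\theta$ is the same whether computed in $A$ or in $B$. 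Therefore $T_\theta$ is isometric, in particular injective. (Alternatively, in the realization of Step 2 the map $\Id\otimes T$ is injective because both the minimal tensor product and the canonical extension to multiplier algebras preserve injectivity.)
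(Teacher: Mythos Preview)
Your proof is correct and follows the same overall outline as the paper's, which itself defers most details to \cite{Ri}. Steps 1, 3, and 4 are essentially identical to what the paper does (or cites). The only genuine difference is in Step 2: the paper obtains $\|T(a)\|_\theta\le\|a\|_\theta$ via the $C^*$-module realization of Proposition \ref{CMA} (following \cite[Theorem 5.7]{Ri}), whereas you go through the Wigner-function realization ${\bf W}_\theta^A$ and the functoriality of $\Id\otimes T$ on $\CK(L^2(\k^d))\otimes A$ together with an extension to multipliers. Your route works, but the canonical extension of $\Id\otimes T$ to multiplier algebras is only automatic when $T$ (hence $\Id\otimes T$) is nondegenerate, so as written your Step 2 has a small gap in the degenerate case. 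A cleaner way to package your own idea, avoiding this subtlety, is to use the observation made just before Proposition \ref{CMA}: on the dense subspace $\CB(\k^{2d})\otimes A\subset\CB(\k^{2d},A)$ the deformed norm coincides with the minimal tensor norm on $\overline{(\CB(\k^{2d}),\star_\theta)}\otimes_{\min}A$; since Fr\'echet convergence dominates $\|\cdot\|_\theta$ (Proposition \ref{CV}), this gives $\|a\|_\theta=\|\tilde\alpha(a)\|_{\min}$, and then contractivity of $\Id\otimes T$ on minimal tensor products yields $\|T(a)\|_\theta\le\|a\|_\theta$ directly. Both approaches buy the same conclusion; the paper's module approach stays closer to Rieffel's original and sidesteps the degeneracy issue without comment.
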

\begin{proof}
 Let $a\in A_{\rm reg}$. Then from the equality (in $C_{u}(\k^{2d},A)$) $\tilde\beta(T(a))=\Id\otimes T
\big( \tilde\alpha(a)\big)$
together with the fact that $*$-homomorphisms are norm decreasing, we  get $\|T(a)\|_n\leq\|a\|_n$,
which implies that $T$ maps $A_{\rm reg}$ to $B_{\rm reg}$. From the absolute convergence of the integral formula 
\eqref{OM} for $\star_\theta$ at the level of $A_{\rm reg}$ we deduce that $T$ is a continuous $*$-homomorphism
from $(A_{\rm reg},\star_\theta^\alpha)$ to $(B_{\rm reg},\star_\theta^\alpha)$. 
From arguments identical to those of 
\cite[Theorem 5.7]{Ri} (using the $C^*$-module approach to the deformed $C^*$-norm as explained in  
Proposition \ref{CMA}), we see that $T$ is continuous for the $C^*$-norms, hence it extends to the 
completions. $T_\theta$, the extension of $T$, also intertwines the actions because the actions 
 have not changed. 
That $T_\theta$ is injective (respectively  surjective) when $T$ is injective (respectively surjective)
can be proven exactly as in   \cite[Proposition 5.8]{Ri}.
\end{proof}

\begin{prop}
Let $(A,\alpha)$ be a $C^*$-algebra endowed with a
continuous   action of $\k^{2d}$ and let  $I$ be an $\alpha$-invariant (essential) ideal
of $A$. Then $I_\theta$ is an (essential) ideal of $A_\theta$. 
\end{prop}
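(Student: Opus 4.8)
The plan is to first show that $I_{\mathrm{reg}}=I\cap A_{\mathrm{reg}}$ is an ideal of the Fr\'echet algebra $(A_{\mathrm{reg}},\star_\theta^\alpha)$ and then to promote this to an ideal statement at the $C^*$-level by a density and continuity argument. Note that since $I$ is $\alpha$-invariant, the restriction $\alpha|_I$ is a continuous action of $\k^{2d}$ on $I$, so $I_{\mathrm{reg}}$ makes sense, and by Proposition \ref{InjSurj} applied to the inclusion $\iota\colon I\hookrightarrow A$, the induced map $\iota_\theta\colon I_\theta\to A_\theta$ is an injective $*$-homomorphism; we will identify $I_\theta$ with its image. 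The first point to record is the compatibility of the two notions of regular elements: for $a\in I$ one has $\tilde\alpha(a)\in\CB(\k^{2d},A)$ with values in $I$ iff $\tilde{\alpha|_I}(a)\in\CB(\k^{2d},I)$, so $I_{\mathrm{reg}}=I\cap A_{\mathrm{reg}}$ and the Fr\'echet topologies agree (the seminorms $\|\cdot\|_n$ are computed using the $C^*$-norm of $I$, which is the restriction of that of $A$).

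Next I would verify that $I_{\mathrm{reg}}$ is an ideal in $(A_{\mathrm{reg}},\star_\theta^\alpha)$. Using the intertwining relation \eqref{OOO}, it suffices to show that $\tilde\alpha(I_{\mathrm{reg}})$ is an ideal in $(\CB(\k^{2d},A),\star_\theta)$ when intersected appropriately; concretely, for $a\in A_{\mathrm{reg}}$ and $b\in I_{\mathrm{reg}}$ the point-wise formula in Remark \ref{morepower},
$$
a\star_\theta^\alpha b = |2|_\k^{2d}\int_{\k^{2d}\times\k^{2d}} \overline\Psi\big(2[Y,Z]\big)\,\mu_0^{-N}(Y)\,\mu_0^{-N}(Z)\,\alpha_{\theta Y}\big(J_\theta^{N}\tilde\alpha(a)(0)\big)\,\alpha_{Z}\big(J^{N}\tilde\alpha(b)(0)\big)\,dY\,dZ,
$$
exhibits $a\star_\theta^\alpha b$ as a norm-convergent $A$-valued integral whose integrand lies in $I$ for every $Y,Z$ (because $I$ is an ideal of $A$ and $\alpha$-invariant, so $\alpha_Z\big(J^N\tilde\alpha(b)(0)\big)\in I$), and since $I$ is norm-closed in $A$ the integral lies in $I$; hence $a\star_\theta^\alpha b\in I\cap A_{\mathrm{reg}}=I_{\mathrm{reg}}$, and symmetrically for $b\star_\theta^\alpha a$ using the other point-wise expression.

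To pass to the $C^*$-level, I would argue that $I_\theta$, being the $\|\cdot\|_\theta$-closure of $I_{\mathrm{reg}}$ inside $A_\theta$, absorbs products with $A_\theta$: given $a\in A_\theta$ and $b\in I_\theta$, approximate $a$ by $a_k\in A_{\mathrm{reg}}$ and $b$ by $b_k\in I_{\mathrm{reg}}$ in $\|\cdot\|_\theta$; then $a_k\star_\theta^\alpha b_k\in I_{\mathrm{reg}}\subset I_\theta$ by the previous paragraph, and since multiplication is jointly continuous for the $C^*$-norm $\|\cdot\|_\theta$ (indeed $\|x\star_\theta^\alpha y\|_\theta\le\|x\|_\theta\|y\|_\theta$ on the pre-$C^*$-algebra $A_{\mathrm{reg}}$, extending to $A_\theta$), the products converge to $a\star_\theta^\alpha b$, which therefore lies in the closed set $I_\theta$. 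This shows $I_\theta$ is a (closed, two-sided) ideal of $A_\theta$. For the essential case, recall $I\subset A$ is essential iff the only $a\in A$ with $aI=\{0\}$ is $a=0$, equivalently iff the canonical map $A\to M(I)$ is injective. Here I would invoke the crossed-product picture of Proposition \ref{NES1}: $\k^{2d}\ltimes_{\alpha_\theta}A_\theta\simeq\k^{2d}\ltimes_\alpha A$ and likewise (by naturality of this isomorphism, applied to $I$ and using that $\k^{2d}\ltimes_\alpha I$ is an essential ideal of $\k^{2d}\ltimes_\alpha A$ when $I$ is essential in $A$ — a standard crossed-product fact) $\k^{2d}\ltimes_{\alpha_\theta}I_\theta$ is essential in $\k^{2d}\ltimes_{\alpha_\theta}A_\theta$; then using that a $G$-invariant ideal $J$ of a $G$-$C^*$-algebra $B$ is essential iff $G\ltimes J$ is essential in $G\ltimes B$, one reads off that $I_\theta$ is essential in $A_\theta$. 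Alternatively, and more self-containedly, essentiality can be checked directly on the Hilbert module $L^2(\k^{2d},A)$ using the $C^*$-module realization of Proposition \ref{CMA}: if $a\in A_\theta$ satisfies $a\star_\theta^\alpha I_\theta=\{0\}$ then $L_\theta(\tilde\alpha(a))L_\theta(\tilde\alpha(b))=0$ for all $b\in I_{\mathrm{reg}}$, and since $\langle\CS(\k^{2d},I),\CS(\k^{2d},I)\rangle_I$ is dense in $I$ hence acts non-degenerately, one concludes $L_\theta(\tilde\alpha(a))=0$, i.e.\ $a=0$; the main obstacle here is checking this non-degeneracy carefully, which I expect to be the one genuinely technical point of the proof, the rest being a routine transcription of Rieffel's argument.
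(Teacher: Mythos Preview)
Your proposal is correct and follows essentially the same route as the paper, which simply refers to \cite[Proposition 5.9]{Ri} after noting that $I_{\rm reg}$ is an ideal of $(A_{\rm reg},\star_\theta^\alpha)$ by absolute convergence of the integral formula \eqref{OM}---exactly your first step. Your passage to the $C^*$-level by density and submultiplicativity of $\|\cdot\|_\theta$ is the standard Rieffel argument; for the essential case, your Hilbert-module sketch is the one closest to Rieffel's original, while your crossed-product alternative via Proposition \ref{NES1} is a legitimate shortcut available here (but not in \cite{Ri}) that sidesteps the non-degeneracy verification you flagged.
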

\begin{proof}
This is exactly the arguments of \cite[Proposition 5.9]{Ri} except the fact that we need to show
that if $I$ is an $\alpha$-invariant (essential) ideal of $A$ then, $I_{\rm reg}$ is an ideal of $(A_{\rm reg},
\star_\theta^\alpha)$.
But this fact again follows from the absolute  convergence of the integral formula 
\eqref{OM}.
\end{proof}

Now we come to a very important point, namely that the deformation can be performed 
in stages.  

\begin{thm}
\label{INV}
Let $\theta,\theta'\in\CO_\k$. Then, $(A_{\theta})_{\theta'}\simeq A_{\theta+\theta'}$
 and moreover $(A_{\theta})_{\rm reg}=A_{\rm reg}$.
\end{thm}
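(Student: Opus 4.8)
The plan is to reduce everything to the Fréchet level and then use the three equivalent pictures of the deformed $C^*$-norm.  First I would prove the identity $(A_{\theta})_{\rm reg}=A_{\rm reg}$ \emph{as Fréchet spaces}.  The key observation is that the action $\alpha$ on $A_\theta$ coincides with the extension $\alpha_\theta$ of Proposition \ref{ac}, and that the embedding $\tilde\alpha\colon A_{\rm reg}\to\CB(\k^{2d},A)$ intertwines $\alpha$ with the translation action $\tau$.  Since the seminorms $\mathfrak P_n^A$ defining $\CB(\k^{2d},A)$ involve only the operators $J$ (which commute with translations) and the \emph{undeformed} sup-norm $\mathfrak P_0^A$, and since by Corollary \ref{HW} and Proposition \ref{CMA} the deformed $C^*$-norm $\|\cdot\|_\theta$ is sandwiched between two undeformed seminorms of the family $\mathfrak P_n^A$ (explicitly $\|a\|_\theta\le\|\mu_0^{-2d-1}\|_1\,\|a\|_{2d+1}^A$ from Proposition \ref{CV}, and conversely $\|a\|_A=\|a\|_{\theta=0}\le\|a\|_\theta$ does \emph{not} hold — rather one needs the reverse bound via the $C^*$-module picture), one checks that the Fréchet topology on $A_{\rm reg}$ computed with respect to $\alpha_\theta$ on $A_\theta$ gives the \emph{same} seminorms as the one computed with respect to $\alpha$ on $A$.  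Concretely: $a\in(A_\theta)_{\rm reg}$ iff $X\mapsto(\alpha_\theta)_X(a)=\alpha_X(a)$ lies in $\CB(\k^{2d},A_\theta)$; but $\mathfrak P_n^{A_\theta}(\tilde\alpha(a))=\sup_X\|J^n\tilde\alpha(a)(X)\|_\theta$, and using that $J^n\tilde\alpha(a)(X)=\alpha_X(J^n\tilde\alpha(a)(0))$ together with $\alpha$ being $\|\cdot\|_\theta$-isometric, this equals $\|J^n\tilde\alpha(a)(0)\|_\theta$, which by the $C^*$-module realization equals $\|L_\theta(J^n\tilde\alpha(a))\|$; a further application of the sandwich estimate shows this is finite for all $n$ exactly when $a\in A_{\rm reg}$, with the two systems of seminorms mutually dominating each other up to shifts of the index.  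This proves $(A_\theta)_{\rm reg}=A_{\rm reg}$ with identical Fréchet topology.

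Second, I would identify the deformed products: on $A_{\rm reg}=(A_\theta)_{\rm reg}$ the product $\star_{\theta'}^{\alpha_\theta}$ should equal $\star_{\theta+\theta'}^{\alpha}$.  This is the heart of the matter and the place where the Abelian structure and the scaling relations $U_\theta(x,\xi)=U_1(x,\theta^{-1}\xi)$, $\tau_{\theta Y}$, etc., do the work.  Via the intertwiner $\tilde\alpha$ it suffices to prove the corresponding statement on $\CB(\k^{2d},A)$: for $F_1,F_2\in\CB(\k^{2d},A)$,
$$
F_1\mathbin{\star_{\theta'}}^{(\theta)}F_2=F_1\star_{\theta+\theta'}F_2,
$$
where $\star_{\theta'}^{(\theta)}$ denotes the product \eqref{Star} but with the translation action replaced by $\tau^{(\theta)}_X:=$ the action induced on the already-$\theta$-deformed algebra.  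Unwinding the definition using Remark \ref{morepower} (the point-wise form with $\tau_{\theta'Y}(J_{\theta'}^N F_1)(X)=(J_{\theta'}^NF_1)(X+\theta' Y)$, and noting that on the $\theta$-deformed algebra translation by $\theta' Y$ must be composed inside the $\theta$-twisted integral), one gets a double oscillatory integral in variables $(Y,Z)$ and $(Y',Z')$ which, after the change of variables dictated by the bilinearity $\tau_{\theta Y}\circ\tau_{\theta' Y'}=\tau_{(\theta+\theta')(\cdots)}$ and the cocycle identity $\overline\Psi(2[Y,Z])\,\overline\Psi(2[Y',Z'])\to\overline\Psi(2[Y'',Z''])$ for the appropriately recombined variables, collapses to the single $\theta+\theta'$ oscillatory integral.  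The cleanest way to organize this computation is on $\CS(\k^{2d},A)$ first, where by the last formula of Remark \ref{morepower} the products are plain absolutely convergent integrals and the claimed identity reduces to the elementary functional identity
$$
\int\!\!\int\overline\Psi(2[Y,Z])\,\Big(\int\!\!\int\overline\Psi(2[Y',Z'])\,F_1(X+\theta Y+\theta' Y')\,F_2'(X+Z')\,dY'dZ'\Big)(X+Z)\cdots
$$
— after using $\CG^2=\Id$ and Fubini this is exactly $F_1\star_{\theta+\theta'}F_2$ — and then one passes to $\CB(\k^{2d},A)$ by the density argument of Proposition \ref{prop1} (or by the sequence $F_N$ of Lemma \ref{sequence}) together with the continuity estimates \eqref{IN1}.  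A subtlety here: one must check that the \emph{iterated} deformation procedure makes sense, i.e.\ that $\tilde\alpha_\theta(a)$, the image of $a\in(A_\theta)_{\rm reg}$ in $\CB(\k^{2d},A_\theta)$, is obtained from $\tilde\alpha(a)\in\CB(\k^{2d},A)$ by the identity map on the underlying function and only reinterpreting the target norm; this follows from the first part since $(\alpha_\theta)_X=\alpha_X$ pointwise.

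Third and last, having matched the underlying $*$-algebras $((A_\theta)_{\rm reg},\star_{\theta'}^{\alpha_\theta},*)=(A_{\rm reg},\star_{\theta+\theta'}^{\alpha},*)$ with identical Fréchet topology, I would match the $C^*$-norms.  By definition $\|a\|_{(\theta)\,\theta'}=\|{\bf W}_{\theta'}^{A_\theta}(\tilde\alpha_\theta(a))\|$, built from the pseudo-differential calculus of parameter $\theta'$ over the coefficient algebra $A_\theta$, whereas $\|a\|_{\theta+\theta'}=\|{\bf W}_{\theta+\theta'}^A(\tilde\alpha(a))\|$.  The quickest route is the twisted crossed product picture of subsection \ref{3}: by Proposition \ref{NES1} applied twice, $\k^{2d}\ltimes_{(\alpha_\theta)_{\theta'}}(A_\theta)_{\theta'}\simeq\k^{2d}\ltimes_{\alpha_\theta}A_\theta\simeq\k^{2d}\ltimes_\alpha A$, and the composite isomorphism is $\Pi_\theta\circ\Pi_{\theta'}=\Pi_{\theta+\theta'}$ because the maps $\Pi_\bullet=\CG\circ S_\bullet\circ\CG$ satisfy $S_\theta\circ S_{\theta'}=S_{\theta+\theta'}$ (this is where $\CO_\k$-additivity of the parameter, as opposed to multiplicativity, is essential, and it is precisely the reason the statement reads $A_{\theta+\theta'}$ rather than $A_{\theta\theta'}$).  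By the abstract Landstad-algebra characterization recalled before Theorem \ref{TT} (uniqueness of the $C^*$-algebra inside the multiplier algebra of the crossed product fixed by the dual action and the Landstad conditions), the image of $(A_\theta)_{\theta'}$ and that of $A_{\theta+\theta'}$ under the common crossed-product identification coincide, forcing the two $C^*$-norms on $A_{\rm reg}$ to agree.  The main obstacle, I expect, is \emph{not} any single step but the bookkeeping in the second step: correctly tracking which translation variable gets scaled by $\theta$, which by $\theta'$, and verifying that the recombination of the two symplectic phase factors really is governed by the bicharacter identity and not spoiled by the $\mu_0^{-N}$ regulators — the regulators must be handled by the oscillatory trick \eqref{OT} applied in the \emph{combined} variables, which is legitimate because $J$, $J_\theta$, $J_{\theta'}$ all mutually commute by Lemma \ref{DILL}(i) and Lemma \ref{lem1}(i).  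Once that combinatorial identity is nailed down on $\CS(\k^{2d},A)$ and extended by density, the rest is a routine assembly of results already proved in Sections 3 and 4.
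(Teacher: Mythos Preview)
Your steps 2 and 3 are essentially correct and in fact more explicit than what the paper does: the paper simply cites \cite[Lemma 3.5]{K1} or \cite[Theorem 3.10]{NT} for the isomorphism $(A_\theta)_{\theta'}\simeq A_{\theta+\theta'}$, whereas you unpack it via the additivity $S_\theta\circ S_{\theta'}=S_{\theta+\theta'}$ (hence $\Pi_\theta\circ\Pi_{\theta'}=\Pi_{\theta+\theta'}$) and the Landstad characterization. That is a legitimate and more self-contained route.

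The genuine gap is in your step 1, specifically the reverse inclusion $(A_\theta)_{\rm reg}\subset A_{\rm reg}$. You assert that the two seminorm systems ``mutually dominate each other up to shifts of the index'', but you only establish one direction: $\|a\|_n^{A_\theta}\le C\,\|a\|_{n+2d+1}^A$ via Proposition~\ref{CV}. For the reverse bound $\|a\|_n^A\le C\,\|a\|_{n+k}^{A_\theta}$ you invoke ``the $C^*$-module picture'', but no such bound is available there. The difficulty is not merely estimating a norm: an element $a\in(A_\theta)_{\rm reg}$ lies a priori only in the completion $A_\theta$, and there is no direct reason it should lie in $A$ at all, let alone in $A_{\rm reg}$. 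Your sandwich argument implicitly assumes $a\in A_{\rm reg}$ already when you write $\tilde\alpha(a)\in\CB(\k^{2d},A)$.

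The fix is to reorder your argument. Prove steps 2 and 3 \emph{first} (on the dense subspace $A_{\rm reg}$, where everything makes sense), obtaining the $C^*$-isomorphism $(A_\theta)_{\theta'}\simeq A_{\theta+\theta'}$. This is exactly what the paper does by citation. Then the reverse inclusion of regular spaces follows by the trick the paper uses: the one-sided inclusion $B_{\rm reg}\subset(B_{\theta'})_{\rm reg}$ holds for \emph{any} $C^*$-dynamical system $B$ and any $\theta'\in\CO_\k$ (this is your forward bound, applied with $B$ in place of $A$). Taking $B=A_\theta$ and $\theta'=-\theta$ gives $(A_\theta)_{\rm reg}\subset\big((A_\theta)_{-\theta}\big)_{\rm reg}$, and the stages isomorphism with $\theta'=-\theta$ identifies the right-hand side with $(A_0)_{\rm reg}=A_{\rm reg}$. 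Without this bootstrap via the $C^*$-isomorphism, your step 1 does not close.
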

\begin{proof}
That $(A_{\theta})_{\theta'}\simeq A_{\theta+\theta'}$ follows from \cite[Lemma 3.5]{K1}
or \cite[Theorem 3.10]{NT} and it remains to prove that $(A_{\theta})_{\rm reg}=A_{\rm reg}$.
The first step is to show that $A_{\rm reg}\subset (A_{\theta})_{\rm reg}$ with dense inclusion. 
By construction,  $A_{\rm reg}\subset A_{\theta}$, so that
it makes sense to evaluate the seminorms $\|.\|_n^{A_\theta}$ (i.e$.$ those giving the Fr\'echet 
topology of $(A_{\theta})_{\rm reg}$) on $A_{\rm reg}$:
$$
\|a\|_n^{A_\theta}=\mathfrak P_n^{A_\theta}\big(\tilde\alpha(a)\big)=\sup_{X\in\k^{2d}}
\|J^n\tilde\alpha(a)(X)\|_\theta\leq C\!\!\sup_{X\in\k^{2d}}\|J^n\tilde\alpha(a)(X)\|_{2d+1}^A=C\!\!
\sup_{X\in\k^{2d}}\mathfrak P_{2d+1}^A\big(J^n\tilde\alpha(a)(X)\big),
$$
but it is easy to see that the later expression coincides with 
$\mathfrak P_{n+2d+1}^A\big(\tilde\alpha(a)\big)=\|a\|_{n+2d+1}^A$,
showing that  $A_{\rm reg}\subset (A_{\theta})_{\rm reg}$.  That $A_{\rm reg}$ is dense in  
$(A_{\theta})_{\rm reg}$, 
 follows from the Dixmier-Malliavin Theorem for
 general locally compact groups \cite[Theorem 4.16]{Meyer}. Indeed, let $a\in (A_\theta)_{\rm reg}$, 
 $\eps>0$ and $n\in\N$.
By Proposition \ref{DAA}, $(A_\theta)^\infty\subset (A_\theta)_{\rm reg}$ densely so that there exists $b\in
(A_\theta)^\infty$ with $\|a-b\|_n^{A_\theta}\leq\eps/2$.  Now, by  \cite[Theorem 4.16]{Meyer},
there exists $b_1,\dots,b_k\in A_\theta$ and $\vf_1,\dots,\vf_k\in\CD(\k^{2d})\subset\CS(\k^{2d})$
such that $b=\sum_{j=1}^k\alpha_{\vf_j}(b_j)$. But by construction $A_{\rm reg}$ is dense in $A_\theta$
so that there exists $c_1,\dots,c_k\in A_{\rm reg}$ with $\|b_j-c_j\|_\theta\leq \eps/(2k\|J^n \vf_j\|_1)$.
Setting $c:=\sum_{j=1}^k\alpha_{\vf_j}(c_j)$, we finally deduce
\begin{align*}
\|a-c\|_n^{A_\theta}&\leq\|a-b\|_n^{A_\theta}+\|b-c\|_n^{A_\theta}\leq \frac\eps2+\sum_{j=1}^k
\|\alpha_{\vf_j}(b_j-c_j)\|_n^{A_\theta}.
\end{align*}
 Now, from the (already used) relation $J^n\tilde\alpha\big(\alpha_\vf(a)\big)=
 \tilde\alpha\big(\alpha_{J^n\vf}(a)\big
 )$, valid for $a\in A_\theta$ and $\vf\in\CS(\k^{2d})$ and since the action $\alpha$ is still isometric
 on $A_\theta$, we deduce that 
 $$
 \|\alpha_{\vf_j}(b_j-c_j)\|_n^{A_\theta}=\|\alpha_{J^n\vf_j}(b_j-c_j)\|_\theta\leq \|J^n\vf_j\|_1\|b_j-c_j\|_\theta,
 $$ 
 and thus 
 $$
 \|a-c\|_n^{A_\theta}\leq  \frac\eps2+\sum_{j=1}^k
\|J^n\vf_j\|_1 \|b_j-c_j\|_n^{A_\theta}\leq\eps,
$$
 as needed. 
 The reversed inclusion  follows from the first part: we have seen that 
$A_{\rm reg}\subset (A_\theta)_{\rm reg}$ for any $C^*$-algebra $A$ endowed with a  continuous 
action 
 of $\k^{2d}$. Applying this to the deformed $C^*$-algebra $A_\theta$, which still carries
  a  continuous action of $\k^{2d}$, we deduce that for any $\theta'\in \CO_\k$, we have
  $(A_\theta)_{\rm reg}\subset ((A_\theta)_{\theta'})_{\rm reg}$ but since $((A_\theta)_{\theta'})_{\rm reg}
  =(A_{\theta+\theta'})_{\rm reg}$,
  we deduce for $\theta'=-\theta$ that $(A_\theta)_{\rm reg}\subset A_{\rm reg}$, which completes the proof.
\end{proof}
 
 With the help of the above theorem, we can use the same proof than \cite[Theorem 7.7]{Ri}
 to get that the deformation maps equivariant short exact sequences to  short exact sequences.
 Alternatively, one can use \cite[Theorem 3.9]{K1}.
\begin{thm}
\label{ESES}
Let $I$ be an $\alpha$-invariant ideal of $A$ and let  $Q:=A/I$ endowed with the quotient 
action. Then the equivariant short exact sequence
$$
0\to I\to A\to Q\to 0,
$$
gives rise to a short exact sequence of deformed algebras:
$$
0\to I_\theta\to A_\theta\to Q_\theta\to 0.
$$
\end{thm}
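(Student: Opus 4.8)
The plan is to reduce everything to known facts about crossed products, following the template of \cite[Theorem 7.7]{Ri} but using the tools already at our disposal. The short exact sequence
$$
0\to I\to A\to Q\to 0
$$
is equivariant for $\k^{2d}$, so by the already-established functoriality (Proposition \ref{InjSurj}) the inclusion $\iota:I\to A$ induces an injection $\iota_\theta:I_\theta\to A_\theta$ and the quotient map $q:A\to Q$ induces a surjection $q_\theta:A_\theta\to Q_\theta$. Moreover, since $q\circ\iota=0$, the absolutely convergent integral formula \eqref{OM} shows that at the Fr\'echet level $q_\theta\circ\iota_\theta$ vanishes on $I_{\rm reg}$, hence $q_\theta\circ\iota_\theta=0$ on all of $I_\theta$ by density. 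Thus $\iota_\theta(I_\theta)\subseteq\ker q_\theta$, and the whole problem reduces to proving the reverse inclusion $\ker q_\theta\subseteq\iota_\theta(I_\theta)$, equivalently that $A_\theta/I_\theta\simeq Q_\theta$.

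The cleanest route is through the twisted crossed product picture of Section \ref{3}. By Proposition \ref{NES1}, the map $\Pi_\theta$ gives isomorphisms $\k^{2d}\ltimes_{\alpha_\theta}A_\theta\simeq\k^{2d}\ltimes_\alpha A$, $\k^{2d}\ltimes_{\alpha_\theta}I_\theta\simeq\k^{2d}\ltimes_\alpha I$ and $\k^{2d}\ltimes_{\alpha_\theta}Q_\theta\simeq\k^{2d}\ltimes_\alpha Q$, all compatible with the morphisms induced by $\iota_\theta,q_\theta$ (since $\Pi_\theta$ is natural with respect to equivariant maps, which one reads off from its defining formula \eqref{T}). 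Because the full crossed product functor $\k^{2d}\ltimes(\,\cdot\,)$ is exact for the abelian (hence amenable) group $\k^{2d}$, the sequence
$$
0\to\k^{2d}\ltimes_\alpha I\to\k^{2d}\ltimes_\alpha A\to\k^{2d}\ltimes_\alpha Q\to 0
$$
is exact; transporting along $\Pi_\theta$, so is the corresponding sequence of deformed crossed products. Now one recovers $A_\theta$ (resp.\ $I_\theta$, $Q_\theta$) inside its crossed product as the image of $\tilde\alpha_\theta$, i.e.\ via the Landstad/Meyer characterization of $A_\theta$ as a fixed-point-type subalgebra of $M(\k^{2d}\ltimes_{\alpha_\theta}A_\theta)$; since $\Pi_\theta$ is equivariant for the dual actions, it carries $\tilde\alpha_\theta(I_\theta)$ to $\tilde\alpha_\theta(I_\theta)$ inside $\tilde\alpha_\theta(A_\theta)$ compatibly, and exactness of the ambient sequence together with the stability of these ``generalized fixed point'' algebras under quotients (for essential ideals, which $\ker q$ is inside $A$) yields exactness of
$$
0\to I_\theta\to A_\theta\to Q_\theta\to 0.
$$

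The main obstacle I anticipate is the last step: passing from exactness of the crossed-product sequence back down to exactness at the level of the deformed algebras themselves. This is not purely formal because $A_\theta$ sits inside the multiplier algebra of the crossed product, and taking the ``coefficient algebra'' of a crossed product is not an exact operation in general. Rieffel handles the analogous $\R^{2d}$-statement directly in \cite[Theorem 7.7]{Ri} by an argument that only uses: (i) the deformation of an invariant ideal is an ideal (our preceding Proposition); (ii) deformation preserves short exactness at the two outer spots, which is Proposition \ref{InjSurj}; and (iii) deformation in stages, i.e.\ Theorem \ref{INV}, to identify $A_\theta/I_\theta$ with $Q_\theta$ by checking they have the same regular subalgebra and the same $C^*$-norm. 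Concretely, one notes $I_{\rm reg}$ is an ideal of $(A_{\rm reg},\star_\theta^\alpha)$, the quotient Fr\'echet algebra $A_{\rm reg}/I_{\rm reg}$ is naturally identified with $Q_{\rm reg}$ via $q$, and then the deformed $C^*$-norm on $A_\theta/\overline{I_{\rm reg}}$ agrees with that on $Q_\theta$ because both are computed from the same Wigner-function representation ${\bf W}_\theta^{A/I}={\bf W}_\theta^Q$ acting on $L^2(\k^d)\otimes(\CH/\,\cdot\,)$; the key input is that $\overline{I_{\rm reg}}^{\,\|\cdot\|_\theta}=I_\theta$, which in turn rests on the density $I_{\rm reg}\subset I_\theta$ and on $\ker(q_\theta)\cap A_{\rm reg}\subseteq I_{\rm reg}$, the latter being exactly where Theorem \ref{INV} and the absolute convergence of \eqref{OM} are used. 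This direct approach avoids the subtleties of the multiplier-algebra argument and is the one I would actually write out, deferring to \cite[Theorem 7.7]{Ri} for the routine details and to \cite[Theorem 3.9]{K1} for an alternative.
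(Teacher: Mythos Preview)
Your proposal is correct and lands on essentially the same approach as the paper: after establishing injectivity of $\iota_\theta$, surjectivity of $q_\theta$, and $q_\theta\circ\iota_\theta=0$ via Proposition \ref{InjSurj}, you invoke Theorem \ref{INV} (deformation in stages) and defer to \cite[Theorem 7.7]{Ri}, with \cite[Theorem 3.9]{K1} as an alternative---which is exactly what the paper does. One small correction in your abandoned crossed-product sketch: $\ker q=I$ need not be an essential ideal of $A$, so that parenthetical is unjustified, but since you rightly pivot away from that route it does not affect your final argument.
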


Our last result concerns continuity of the field of deformed $C^*$-algebras $(A_\theta)_{\theta\in\CO_\k}$.
Here, the continuity structure refers to the $*$-subalgebra $A_{\rm reg}$, viewed as 
a subspace of constant sections. For the question of continuity, the twisted crossed product approach does
not seem to be especially appropriate, contrary to the methods developed in \cite{Ri}. Note however that
due to particularity of non-Archimedean analysis, we are forced to consider a restricted range of 
parameters.
\begin{thm}
Let $\gamma\in\CO_\k$ fixed. Then, the field of deformed $C^*$-algebras 
$(A_{\gamma\theta^2})_{\theta\in \CO_\k}$ is continuous.
\end{thm}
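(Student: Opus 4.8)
The plan is to follow Rieffel's strategy from \cite[Ch.~7]{Ri} but adapted to the peculiarities of the parameter space $\CO_\k$, which is why we are restricted to the subfamily $(A_{\gamma\theta^2})_{\theta\in\CO_\k}$ rather than to the full family $(A_\theta)_{\theta\in\CO_\k}$. Recall that continuity of the field means: for every $a\in A_{\rm reg}$, the map $\theta\mapsto\|a\|_{\gamma\theta^2}$ is continuous on $\CO_\k$. Since $A_{\rm reg}$ is dense in each fibre (and the $C^*$-norms are controlled by the fixed Fr\'echet seminorms via Proposition~\ref{CV}), this will give continuity of the whole field. By Theorem~\ref{INV} and the fact that $(A_\theta)_{\rm reg}=A_{\rm reg}$, it suffices to prove continuity at an arbitrary fixed $\theta_0\in\CO_\k$, and by performing the deformation in stages we may even reduce to continuity at $\theta_0=0$, i.e.\ to showing $\lim_{\theta\to0}\|a\|_{\gamma\theta^2}=\|a\|_A$ for all $a\in A_{\rm reg}$. (Indeed, $\gamma\theta^2-\gamma\theta_0^2=\gamma(\theta-\theta_0)(\theta+\theta_0)$, and after deforming in stages this difference plays the role of the new parameter, which still tends to $0$; the quadratic dependence is exactly what makes this reduction clean over $\CO_\k$ where one cannot take square roots freely.)

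First I would express the deformed $C^*$-norm in its $C^*$-module incarnation of Proposition~\ref{CMA}: $\|a\|_{\gamma\theta^2}=\|L_{\gamma\theta^2}(\tilde\alpha(a))\|$, the operator norm of left $\star_{\gamma\theta^2}$-multiplication on the pre-$C^*$-module $\CS(\k^{2d},A)$ over $A$. The key analytic input is the representation of the deformed product via the convergent oscillatory integral \eqref{Star} together with the approximating sequence of Lemma~\ref{sequence}, both of which depend on the parameter only through the dilation operators $J_\theta$, $I_\theta$ and the translations $\tau_{\theta Y}$. The crucial estimates in Lemma~\ref{DILL}(ii)--(iii) hold \emph{uniformly} for $\theta\in\CO_\k$ (that is the whole point of restricting to $\CO_\k$), so the bilinear map $\star_{\gamma\theta^2}:\CB(\k^{2d},A)\times\CB(\k^{2d},A)\to\CB(\k^{2d},A)$ is bounded by fixed seminorm products independently of $\theta$; likewise $L_\bullet(\tilde\alpha(a))$ is a uniformly bounded family of operators on the module.

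Next I would establish the continuity $\theta\mapsto F_1\star_{\gamma\theta^2}F_2$ in the Fr\'echet topology of $\CB(\k^{2d},A)$, for $F_1,F_2$ fixed. Using Lemma~\ref{sequence} one reduces to the regularized integrals $F_N$, where the integrand carries the Gaussian-type cutoff $e^{-\mu_0(Y)\mu_0(Z)/N}$ making everything absolutely convergent with a dominating function independent of $\theta$; then one needs that $\theta\mapsto\tau_{\gamma\theta^2 Y}(J_{\gamma\theta^2}^{2d+2}F_1)$ and $\theta\mapsto J^{2d+2}F_2$ behave continuously, which for the translation part is the continuity of $\theta\mapsto\tau_{\gamma\theta^2 Y}$ on $C_u(\k^{2d},A)$ (automatic, since $\gamma\theta^2 Y\to0$) and for the dilation part follows from the explicit integral formula for $J_\theta^n$ in the proof of Lemma~\ref{DILL}(ii) together with dominated convergence, using $\mu_0(\gamma\theta^2 Y)\le\mu_0(Y)$ and that $\mu_0(\gamma\theta^2 Y)\to\mu_0(0)=1$ pointwise. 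A word of caution: over a non-Archimedean field $\theta\mapsto\mu_0(\gamma\theta^2 Y)$ is locally constant in $\theta$ for each fixed $Y$ but \emph{not} uniformly in $Y$, so one genuinely needs the dominated convergence argument rather than a naive uniform estimate --- this is where the square, i.e.\ forcing $\gamma\theta^2\in\CO_\k$ with $|\gamma\theta^2|_\k\le|\theta|_\k^2\to0$, is used. At $\theta=0$ the product degenerates to the pointwise product by Proposition~\ref{prop1}, so the limit of $\|L_{\gamma\theta^2}(\tilde\alpha(a))\|$ as $\theta\to0$ is $\|L_0(\tilde\alpha(a))\|=\sup_X\|\alpha_X(a)\|_A=\|a\|_A$, matching $A_0=A$.

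The final step is a standard semicontinuity-upgrade argument, exactly as in \cite[\S7]{Ri}: the parameter-continuity of the product on the Fr\'echet level, together with the uniform boundedness of the operators $L_{\gamma\theta^2}(\tilde\alpha(a))$, gives upper semicontinuity of $\theta\mapsto\|a\|_{\gamma\theta^2}$ directly, while lower semicontinuity is obtained by testing against a fixed nonzero vector $f\in\CS(\k^{2d},A)$ in the module and using that $\theta\mapsto\langle L_{\gamma\theta^2}(\tilde\alpha(a))f, L_{\gamma\theta^2}(\tilde\alpha(a))f\rangle_A=\langle f,\tilde\alpha(a)^*\star_{\gamma\theta^2}\tilde\alpha(a)\star_{\gamma\theta^2}f\rangle_A$ is norm-continuous in $A$ (by the Fr\'echet-level continuity of the iterated product and continuity of the pairing), so the supremum defining the operator norm is a supremum of continuous functions and hence lower semicontinuous. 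Combining the two gives continuity. I expect the main obstacle to be precisely the uniformity issue flagged above: making the dominated-convergence control of $\theta\mapsto J_{\gamma\theta^2}^n F$ fully rigorous requires isolating an $L^1$-dominating function for the kernel from the integral formula in Lemma~\ref{DILL}(ii) and checking that the pointwise (non-uniform, but everywhere-convergent) limit of $\mu_0^n(\gamma\theta^2(Y-Z))\to1$ suffices — the quadratic reparametrization is what guarantees we stay inside $\CO_\k$ while letting the effective parameter run to $0$, and it is genuinely needed because $\CO_\k$ is not divisible.
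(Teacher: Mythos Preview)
Your lower-semicontinuity argument is essentially the paper's: testing the module pairing against fixed vectors and using Fr\'echet-level continuity of the product gives $\|a\|_{\gamma\theta_0^2}\le\liminf_{\theta\to\theta_0}\|a\|_{\gamma\theta^2}$. The problem is your upper-semicontinuity step. You assert that ``parameter-continuity of the product on the Fr\'echet level, together with the uniform boundedness of the operators $L_{\gamma\theta^2}(\tilde\alpha(a))$, gives upper semicontinuity \dots\ directly'', but this is false as stated: Fr\'echet continuity of $\star_\theta$ only yields \emph{strong} convergence $L_{\gamma\theta^2}(\tilde\alpha(a))f\to L_{\gamma\theta_0^2}(\tilde\alpha(a))f$ for each $f$, and strong convergence of a uniformly bounded family implies lower---not upper---semicontinuity of the operator norm. (Think of rank-one projections converging strongly to $0$.) So you have proved lower semicontinuity twice and upper semicontinuity not at all. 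Your reduction to $\theta_0=0$ via deformation in stages is also not clean: $\gamma\theta^2-\gamma\theta_0^2=\gamma(\theta-\theta_0)(\theta+\theta_0)$ is not of the form $\gamma'(\theta')^2$, so you would need continuity at $0$ of the \emph{full} family $(B_{\theta'})_{\theta'\in\CO_\k}$, which is stronger than the theorem and is precisely what is not available.

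The paper's route to upper semicontinuity is substantively different and is where the quadratic reparametrization is genuinely used (not, as you suggest, in the dominated-convergence control of $J_\theta$, which works for all $\theta\in\CO_\k$ without any square). The key identity is a rescaling: defining $\alpha^\theta_X:=\alpha_{\theta X}$, one has $D_\theta(F)\star_\gamma D_\theta(f)=D_\theta(F\star_{\gamma\theta^2}f)$, whence a unitary on the module implements $L_\gamma(\tilde\alpha^\theta(a))\cong L_{\gamma\theta^2}(\tilde\alpha(a))$ and therefore $A_{\gamma\theta^2}^\alpha\simeq A_\gamma^{\alpha^\theta}$. Thus the variation in the deformation parameter is traded for a variation in the \emph{action}, with the deformation parameter $\gamma$ now fixed. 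The family of actions $(\alpha^\theta)_{\theta\in\CO_\k}$ is then packaged into a single action $\beta$ of $\k^{2d}$ on $C_0(\CO_\k,A)$, one deforms once at the fixed parameter $\gamma$, and each fibre $A_\gamma^{\alpha^\theta}$ is recovered as the quotient $C_0(\CO_\k,A)_\gamma^\beta/C_0^\theta(\CO_\k,A)_\gamma^\beta$ via Theorem~\ref{ESES}. Upper semicontinuity then follows from Rieffel's general result \cite[Proposition~1.2]{Ri-1} after identifying $C_0^\theta(\CO_\k,A)_\gamma^\beta$ with $\overline{C_0(\CO_\k,A)_\gamma^\beta\cdot C_0^\theta(\CO_\k)}$ (Cohen factorization plus Proposition~\ref{BAU}). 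This bundle-and-quotient argument is the missing idea in your proposal.
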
 
\begin{proof}
Our proof mimics \cite[Chapter 8]{Ri}, where continuity is obtained from combination of lower
and upper semicontinuity.
By an immediate adaptation of the arguments given in \cite[page 55]{Ri}, lower semicontinuity
of the field  $(A_\theta)_{\theta\in \CO_\k}$
will follow if for all $a\in A_{\rm reg}$, all $f_1\in \CD(\k^{2d},A)$ and $f_2\in\CS(\k^{2d},A)$ with
$\CG(f_2)\in \CD(\k^{2d},A)$, we have
$$
\big\|\langle f_1,\tilde\alpha(a)\star_\theta f_2\rangle_A-
\langle f_1,\tilde\alpha(a)\star_{\theta'} f_2\rangle_A\big\|_A\to0,\quad \theta\to\theta'.
$$
But this easily follows from the strong continuity of $\alpha$ and a compactness argument once one
has realized that
\begin{align*}
&\langle f_1,\tilde\alpha(a)\star_\theta f_2\rangle_A-
\langle f_1,\tilde\alpha(a)\star_{\theta'} f_2\rangle_A\\
&\qquad=
|2|_\k^d\int_{\k^{2d}\times\k^{2d}}\overline\Psi(2[X,Y])\,
f_1(X) \, \alpha_{X}(\alpha_{\theta Y}(a) - \alpha_{\theta'Y}(a))   \,\CG(f_2)(Y) dX dY.
\end{align*}
In particular, the field $(A_{\gamma\theta^2})_{\theta\in \CO_\k}$ is lower-semicontinuous
for any $\gamma\in \CO_\k$.

Upper semicontinuity relies on   \cite[Proposition 1.2]{Ri-1}. To be able to use this result, 
we must let the action $\alpha$
variate.
 So, fix  $\gamma\in\CO_\k$ and define a new action $\alpha^\gamma$ of $\k^{2d}$ on $A$
 by $\alpha^\gamma_X(a) :=\alpha_{\gamma X}(a)$.
It is clear that $\alpha^\gamma$ is still  continuous. To do not get confused, we need extra notations.
We now let  $A_{\rm reg}^\alpha$ to be our dense Fr\'echet subspace of $A$ as given in \eqref{A0} for a
given action $\alpha$. 
Accordingly, we  denote by $\|.\|_n^\alpha$ to be seminorms on $A_{\rm reg}^\alpha$ as defined in 
\eqref{SNn} and by $A_\theta^\alpha$ the deformed $C^*$-algebra. We first observe that 
$A_{\rm reg}^\alpha\subset A_{\rm reg}^{\alpha^\gamma}$ with continuous and dense  inclusion. Indeed,
for  $a\in A_{\rm reg}^\alpha$, using  the notation $D_\gamma F(X):=F(\gamma X)$, $F\in\CB(\k^{2d},A)$, we have
$\tilde\alpha^\gamma(a)=D_\gamma\tilde\alpha(a)$ and thus we deduce from 
 Lemma \ref{DILL} (iii):
$$
\|a\|_n^{\alpha^\gamma}=\mathfrak P_n^A\big(\tilde\alpha^\gamma(a)\big)
\leq\|\mu_0^{-2d-1}\|_1^2 \,
\mathfrak P_{n+2d+1}^A\big(\tilde{\alpha}(a)\big)=\|\mu_0^{-2d-1}\|_1^2\,\|a\|_{n+2d+1}^{\alpha},
$$
and the continuity follows. For the density, one observes  that the spaces  of smooth vectors 
(in the sense of Bruhat) for the actions $\alpha$ and $\alpha^\gamma$ (for $\gamma\ne 0$) coincide since
 $\tilde\alpha(a)$ is locally constant if and only if
 $\tilde\alpha^\gamma(a)$ does. Thus  $A^\infty$ is dense in $A_{\rm reg}^{\alpha^\gamma}$ and $A^\infty$ is 
 contained
 in $A_{\rm reg}^\alpha$ so $A_{\rm reg}^\alpha$ is dense in $A_{\rm reg}^{\alpha^\gamma}$. 
 Next we compare the deformed $C^*$-algebras  $A_\theta^{\alpha^\gamma}$
 and $A_{\gamma^2\theta}^\alpha$.
 Let $F\in\CB(\k^{2d})$ and $f\in\CS(\k^{2d})$. Undoing partially the oscillatory trick in Eq. \eqref{Star}, we get after some rearrangements:
 $$
 F\star_\theta f(X)=|2|^{d}_\k\int_{\k^{2d}}\overline\Psi(2[X,Y])\, F(X+\theta Y)\,\big(\CG f\big)(Y)\, dY.
 $$
  From this and the scaling relation  $\CG D_\gamma(f)=|\gamma|_\k^{-2d}D_\gamma\CG (f)$, 
  $\gamma\in\CO_\k \backslash \{0 \}$, 
  we deduce that $D_\gamma(F)\star_\theta D_\gamma(f)=D_\gamma\big(F\star_{\gamma^2\theta}f\big)$ 
  in $\CS(\k^{2d},A)$.
 Introducing the unitary operator $U_\gamma:=|\gamma|_\k^{d} D_\gamma$ on the pre-$C^*$-module
  $\CS(\k^{2d},A)$, the previous relation entails that for any 
 $a\in A_{\rm reg}^\alpha$, we have:
 \begin{align}
 \label{UNIT}
 U_\gamma^*\, L_\theta\big(\tilde\alpha^\gamma(a)\big)\,U_\gamma=
 L_{\gamma^2\theta}\big(\tilde\alpha(a)\big),
\end{align}
where the equality holds in the $C^*$-algebra of adjointable bounded $A$-linear 
endomorphisms of the pre-$C^*$-module $\CS(\k^{2d},A)$. The above relation also gives 
$L_{\gamma^2\theta}\big(\tilde\alpha(a\star_\theta^{\alpha^\gamma}b)\big)=
L_{\gamma^2\theta}\big(\tilde\alpha(a\star_{\gamma^2\theta}^{\alpha}b)\big)$, for $a,b\in A_{\rm reg}^\alpha$,
and thus
$$
0=\big\|L_{\gamma^2\theta}\big(\tilde\alpha(a\star_\theta^{\alpha^\gamma}b)\big)-
L_{\gamma^2\theta}\big(\tilde\alpha(a\star_{\gamma^2\theta}^{\alpha}b)\big)\big\|=
\big\|L_{\gamma^2\theta}\big(\tilde\alpha(a\star_\theta^{\alpha^\gamma}b-
a\star_{\gamma^2\theta}^{\alpha}b)\big)\big\|=\|a\star_\theta^{\alpha^\gamma}b-
a\star_{\gamma^2\theta}^{\alpha}b\|_{\gamma^2\theta}.
$$
Hence, $a\star_\theta^{\alpha^\gamma}b=
a\star_{\gamma^2\theta}^{\alpha}b$ in $A^\alpha_{\gamma^2\theta}$ but since 
$a\star_{\gamma^2\theta}^{\alpha}b\in A_{\rm reg}^\alpha$ 
(a priori, $a\star_{\gamma^2\theta}^{\alpha}b\in A_{\rm reg}^{\alpha^\gamma}$) the equality takes 
place
within $A_{\rm reg}^\alpha$. But the relation \eqref{UNIT} also shows 
that
$$
\|a\|_\theta^{\alpha^\gamma}=\big\|L_\theta\big(\tilde\alpha^\gamma(a)\big)\big\|
=\big\| L_{\gamma^2\theta}\big(\tilde\alpha(a)\big)\big\|=\|a\|_{\gamma^2\theta}^\alpha\,,\quad\forall a\in 
A_{\rm reg}^\alpha.
$$
Since $A_{\rm reg}^\alpha$ is dense both in $A_\theta^{\alpha^\gamma}$ and in $A_{\gamma^2\theta}^\alpha$,
we deduce that $A_\theta^{\alpha^\gamma}=A_{\gamma^2\theta}^\alpha$. Hence (inverting the roles
of $\theta\in\CO_\k$ and  of $\gamma\in\CO_\k$), it suffices to show that the field
$(A_\theta^{\alpha^\gamma})_{\gamma\in\CO_\k}$ is upper-semicontinuous. To this aim, consider
on the $C^*$-algebra $C_0(\CO_\k,A)$ the action of $\k^{2d}$ given by:
$$
\beta_X(\Phi)(\gamma):=\alpha_{\gamma X}\big(\Phi(\gamma)\big).
$$
The space $\CO_\k$ being compact, one easily sees that $\beta$ is  continuous. For fixed
$\gamma\in\CO_\k$, let $e_\gamma:C_0(\CO_\k,A)\to A$ be the evaluation map at $\gamma$
and let $C_0^\gamma(\CO_\k,A)$ the (norm closed) ideal of elements in $C_0(\CO_\k,A)$ vanishing
at $\gamma$. The associated short exact sequence $0 \to C_0^\gamma(\CO_\k,A)\to C_0(\CO_\k,A)
\to A\to 0$ being equivariant for $\beta$ on $C_0^\gamma(\CO_\k,A)$ and on
$C_0(\CO_\k,A)$, and for $\alpha^\gamma$ on $A$ (since $e_\gamma$ intertwines $\beta$ and
 $\alpha^\gamma$), we deduce from Theorem \ref{ESES} that we have a short exact sequence 
 of deformed $C^*$-algebras:
 $$
 0 \to C_0^\gamma(\CO_\k,A)_\theta^\beta\to C_0(\CO_\k,A)_\theta^\beta
\to A_\theta^{\alpha^\gamma}\to 0.
$$
Moreover, as $C_0(\CO_\k)$ (seen as a subalgebra of $M( C_0(\CO_\k,A))$) is left 
invariant by the action $\beta$, it is its own space of regular elements and by Proposition \ref{prop1}
$\Phi\star_\theta^\beta \eta=\Phi\,\eta=\eta\star_\theta^\beta\Phi$ for all $\Phi\in C_0(\CO_\k,A)_{\rm reg}^\beta$
and all $\eta\in C_0(\CO_\k)$. Hence, $C_0(\CO_\k)$  may also be viewed as a subalgebra of 
$M( C_0(\CO_\k,A)_\theta^\beta)$. Let then $C_0^\gamma(\CO_\k)$ be the norm closed ideal in
$C_0(\CO_\k)$ of elements vanishing at $\gamma$ and let $\overline{C_0(\CO_\k,A)_\theta^\beta
C_0^\gamma(\CO_\k)}$ be the norm closure in the deformed $C^*$-algebra 
$C_0(\CO_\k,A)_\theta^\beta$ of the linear span of products. Then by \cite[Proposition 1.2]{Ri-1}
the field of $C^*$-algebra 
$$
\Big(C_0(\CO_\k,A)_\theta^\beta\Big/\,\overline{C_0(\CO_\k,A)_\theta^\beta
C_0^\gamma(\CO_\k)}\Big)_{\gamma\in\CO_\k},
$$
is upper-semicontinuous. 
But since $A_\theta^{\alpha^\gamma}\simeq C_0(\CO_\k,A)_\theta^\beta/ 
C_0^\gamma(\CO_\k,A)_\theta^\beta$, it suffices to show that $C_0^\gamma(\CO_\k,A)_\theta^\beta$
coincides with
$\overline{C_0(\CO_\k,A)_\theta^\beta C_0^\gamma(\CO_\k)}$. On the one hand, we have
$C_0(\CO_\k,A)C_0^\gamma(\CO_\k)\subset C_0^\gamma(\CO_\k,A)$ and thus at the level of the regular 
vectors $\big(C_0(\CO_\k,A)C_0^\gamma(\CO_\k)\big)_{\rm reg}^\beta \subset C_0^\gamma(\CO_\k,A)
_{\rm reg}^\beta
\subset C_0^\gamma(\CO_\k,A)_\theta^\beta$. But 
$\big(C_0(\CO_\k,A)C_0^\gamma(\CO_\k)\big)_{\rm reg}^\beta=C_0(\CO_\k,A)_{\rm reg}^\beta 
C_0^\gamma(\CO_\k)
=C_0(\CO_\k,A)_{\rm reg}^\beta \star_\theta^\beta C_0^\gamma(\CO_\k)$ and thus 
$\overline{C_0(\CO_\k,A)_\theta^\beta
 C_0^\gamma(\CO_\k)}$ $\subset C_0^\gamma(\CO_\k,A)_\theta^\beta$.
On the other hand since $\overline{C_0(\CO_\k,A) C_0^\gamma(\CO_\k)}=C_0^\gamma(\CO_\k,A)$, 
we have by the Cohen Factorization Theorem
$C_0^\gamma (\CO_\k,A)=C_0(\CO_\k,A) C_0^\gamma(\CO_\k)$ (see for instance
 \cite[Theorem 32.22]{HR2}). Hence, any element $\Phi\in C_0^\gamma(\CO_\k,A)_{\rm reg}^\beta$
 can be written as $\Phi=\Xi\eta$ with $\Xi\in C_0(\CO_\k,A)$ and $\eta\in C_0^\gamma(\CO_\k)$.
 For $\vf\in \CS(\k^{2d})$, we have $\beta_\vf(\Phi)\in C_0^\gamma(\CO_\k,A)_{\rm reg}^\beta$ and 
 $\beta_\vf(\Phi)=\beta_\vf(\Xi)\eta=\beta_\vf(\Xi)\star_\theta^\beta\eta\in C_0(\CO_\k,A)_{\rm reg}^\beta 
 \star_\theta^\beta C_0^\gamma(\CO_\k)$. But by Proposition \ref{BAU} $\Phi$ is approximated
 in $C_0^\gamma(\CO_\k,A)_\theta^\beta$ by elements of the form  $\beta_\vf(\Phi)$. Hence
 $C_0^\gamma(\CO_\k,A)_{\rm reg}^\beta\subset \overline{C_0(\CO_\k,A)_\theta^\beta
 C_0^\gamma(\CO_\k)}$ and thus  $C_0^\gamma(\CO_\k,A)_\theta^\beta
 \subset \overline{C_0(\CO_\k,A)_\theta^\beta
 C_0^\gamma(\CO_\k)}$, concluding the proof.
 \end{proof}

\newpage

\end{document}